\documentclass[12 pt]{article}

\usepackage[
    margin=2.2cm,
    top=2cm,
    bottom=2cm,
    includefoot,
    footskip=20pt,
]{geometry}
\usepackage[utf8]{inputenc}
\usepackage[T1]{fontenc}
\usepackage[dvipsnames]{xcolor}
\usepackage[normalem]{ulem}
\usepackage{etoolbox}

\usepackage{comment}
\usepackage{quiver}
\usepackage{cancel}
\usepackage{setspace}

\usepackage{hyperref}
\hypersetup{hypertexnames = false, bookmarksdepth = 2, bookmarksopen = true, colorlinks, linkcolor = blue, citecolor = blue, urlcolor = blue, pdfstartview={XYZ null null 1}}

\usepackage{
    amsfonts,amsmath,amsthm,amssymb,mathtools,thmtools,
    tikz-cd,color,
    xparse,xspace,
    enumitem,
    microtype,lmodern
}

\usepackage[capitalise]{cleveref}

\theoremstyle{plain}
\newtheorem{theorem}{Theorem}[section]

\newtheorem{corollary}[theorem]{Corollary}

\newtheorem{lemma}[theorem]{Lemma}
\newtheorem{proposition}[theorem]{Proposition}

\newtheorem{conjecture}[theorem]{Conjecture}

\newtheorem{assumption}[theorem]{Assumption}
\theoremstyle{definition}
\newtheorem{definition}[theorem]{Definition}
\newtheorem{notation}[theorem]{Notation}
\newtheorem{remark}[theorem]{Remark}
\newtheorem{construction}[theorem]{Construction}
\newtheorem{setup}[theorem]{Setup}
\newtheorem{example}[theorem]{Example}

\declaretheoremstyle[
  spaceabove = 3pt,
  spacebelow = 3pt,
  bodyfont=\normalfont\itshape,
]{alpha}

\makeatletter
\renewcommand{\paragraph}{%
  \@startsection{paragraph}{4}%
  {\z@}{1.5ex \@plus 1ex \@minus .2ex}{-1em}%
  {\normalfont\normalsize\bfseries}%
}
\makeatother

\DeclareMathOperator\cc{c}
\DeclareMathOperator\ch{ch}

\DeclareMathOperator\Exc{Exc}

\DeclareMathOperator\NE{NE}
\DeclareMathOperator\PC{PC}

\DeclareMathOperator\Pic{Pic}
\DeclareMathOperator\PR{PR}

\DeclareMathOperator\RPC{RPC}
\DeclareMathOperator\RPR{RPR}

\DeclareMathOperator\codim{codim}

\newcommand\CC{\ensuremath{\mathbb{C}}}

\newcommand\PP{\ensuremath{\mathbb{P}}}

\newcommand\ZZ{\ensuremath{\mathbb{Z}}}

\newcommand\cE{\mathcal{E}}

\newcommand\cL{\mathcal{L}}

\newcommand\cO{\mathcal{O}}

\newcommand\cT{\mathcal{T}}
\newcommand\cU{\mathcal{U}}

\mathchardef\mhyphen="2D

\newcommand{\minRC}{m}
\newcommand{\cpc}{S_x}

\usepackage[utf8]{inputenc}
\usepackage[english]{babel}
\usepackage[backend = biber,
    style = alphabetic,
    citestyle = alphabetic,
    maxbibnames=10,
    giveninits=true,
    doi=false,
    url=false
]{biblatex}
\AtBeginBibliography{\footnotesize}

\usepackage{csquotes}
\DeclareFieldFormat{postnote}{#1}

\newcommand{\Addresses}{{% additional braces for segregating \footnotesize
  \bigskip
  \footnotesize

C.~Araujo, \textsc{IMPA, Estrada Dona Castorina 110, 22460-320 Rio de Janeiro, Brazil}\par\nopagebreak
\textit{E-mail address}: \texttt{caraujo@impa.br }

\medskip

R.~Beheshti, \textsc{Department of Mathematics, Washington University in St. Louis, MO 63130}\par\nopagebreak
\textit{E-mail address}: \texttt{beheshti@wustl.edu}

\medskip

A.-M.~Castravet, \textsc{Universit\'e Paris-Saclay, UVSQ, CNRS, Laboratoire de Math\'ematiques de Versailles, 78000, Versailles, France}\par\nopagebreak
\textit{E-mail address}: 
\texttt{ana-maria.castravet@uvsq.fr}

\medskip

K.~Jabbusch, \textsc{Department of Mathematics \& Statistics, University of Michigan--Dearborn, 4901 Evergreen Rd, Dearborn, Michigan, 48128, USA}\par\nopagebreak
\textit{E-mail address}: \texttt{jabbusch@umich.edu}

\medskip

S.~Makarova, \textsc{Mathematical Sciences Institute, The Australian National University, Hanna Neumann Building \#145, Science Road, Canberra ACT 2601, Australia}\par\nopagebreak
\textit{E-mail address}: \texttt{svetlana.makarova@anu.edu.au}

\medskip

E.~Mazzon, \textsc{Centre de Mathématiques Laurent Schwartz (CMLS), CNRS, École polytechnique, Institut Polytechnique de Paris, Palaiseau, France}\par\nopagebreak
\textit{E-mail address}: \texttt{e.mazzon15@alumni.imperial.ac.uk}

\medskip

N.~Viswanathan, \textsc{School of Physical and Chemical Sciences, Queen Mary University of London, United Kingdom,  E1 4NS}\par\nopagebreak
\textit{E-mail address}: \texttt{n.viswanathan@qmul.ac.uk}
}}

\title{On the classification of toric 2-Fano manifolds: \\
generic $\mathbb{P}^2$-bundles
}

\author{Carolina Araujo,
Roya Beheshti,
Ana-Maria Castravet,
Kelly Jabbusch,\\
Svetlana Makarova,
Enrica Mazzon,
Nivedita Viswanathan}

\date{}

\begin{document}

\maketitle

\begin{abstract} 
    In this paper, we advance the classification of toric 2-Fano manifolds by continuing the investigation of the minimal projective bundle dimension \(m(X) \in \{1,\dots,\dim(X)\}\) introduced in our previous work. 
    This invariant captures the minimal degree of a dominating family of rational curves on \(X\) and admits a natural combinatorial interpretation in terms of centered primitive collections.
    We develop an approach that relates, via toric blowdowns and flips, a toric Fano manifold \(X\) to a toric manifold \(Y\) that admits a \(\mathbb{P}^{m(X)}\)-bundle structure on a big open subset.
    We then compare positivity of the second Chern characters of \(X\) and \(Y\), and 
    show that the only toric 2-Fano manifold \(X\) with \(m(X) = 2\) is \(X\cong \mathbb{P}^2\).
    In the example-driven Appendix B, we demonstrate that extending this strategy to the case \(m(X)>2\) requires either a substantially more detailed analysis of the combinatorics of primitive collections or a fundamentally new approach.
\end{abstract}

\setcounter{tocdepth}{1}
\tableofcontents

\newpage

% ---
\section{Introduction}
% ---

A \emph{Fano manifold} is a smooth projective variety with ample anticanonical divisor. 
Fano manifolds exhibit remarkable geometric and arithmetic properties. 
In particular, as first observed by Mori in his seminal paper \cite{mori79}, every Fano manifold is covered by rational curves.
Moreover, the classical theorem of Tsen guarantees that every one-dimensional family of Fano hypersurfaces admits a section. 
These results were later generalized.
Fano manifolds are known to be \emph{rationally connected} \cite{KMM92}, i.e., any two points can be connected by a rational curve, while any one-dimensional family of rationally connected manifolds admits a section \cite{GHS}.

In recent years, there has been growing interest in formulating higher-order analogues of the Fano condition. 
In this context, de Jong and Starr introduced in \cite{dJS06} the notion of a \emph{$2$-Fano manifold}: a Fano manifold $X$ whose second Chern character satisfies 
\[
\mathrm{ch}_2(X) \cdot S > 0 \quad \text{for every surface } S \subset X.
\]
Examples of $2$-Fano manifolds include projective spaces and smooth hypersurfaces of degree $d$ in $\PP^n$ with $d^2<n$.
It is expected that $2$-Fano manifolds should enjoy strengthened versions of the geometric and arithmetic properties of classical Fano manifolds. 
For instance, under suitable hypotheses, $2$-Fano manifolds are covered by rational surfaces (\cite{dJS07hF_and_rat_surfaces}, \cite{AraujoCastravet2012}), and it is expected that every two-dimensional family of $2$-Fano manifolds admits a rational section, up to Brauer obstruction.

The $2$-Fano condition is extremely restrictive, and one of our long-term goals is the complete classification of $2$-Fano manifolds. 
Several special classes have already been classified: $2$-Fano varieties of high index in \cite{AraujoCastravet2013}, homogeneous $2$-Fano varieties in \cite{team2022}, and horospherical $2$-Fano varieties in \cite{AraujoCastravet2024}.

The present article concerns the problem of classifying toric $2$-Fano manifolds.
After extensive work on this problem (\cite{Nobili2011}, \cite{Nobili2012}, \cite{Sato2012}, \cite{Sato2016}, \cite{SatoSuyama2020}, \cite{SanoSatoSuyama2020},  \cite{Shrieve2020}, \cite{team2023}), 
 projective spaces remain the only known examples of toric $2$-Fano manifolds, leading to the following conjecture.

\begin{conjecture}[{\cite[Conjecture 4.3]{SanoSatoSuyama2020}}] \label{conjecture}
The only toric 2-Fano manifolds are projective spaces.
\end{conjecture}

Toric geometry provides an effective bridge between algebraic geometry and combinatorics: there is a one-to-one correspondence between $n$-dimensional toric varieties and lattice fans in $\mathbb{R}^n$, and many geometric properties of a toric variety can be translated into combinatorial statements about the associated fan.
A new approach to the classification of toric $2$-Fano manifolds was introduced in \cite{team2023}, and the goal of this paper is to push the strategy developed there one step further.
As proposed in \cite{AraujoCastravet2012}, the idea is to investigate $2$-Fano manifolds through their \emph{minimal rational curves}. 
By \cite{CFH}, minimal rational curves on an $n$-dimensional proper toric manifold $X$ correspond to \emph{primitive relations} of the form
\begin{equation}\label{CSR}
    x_0 + \dots + x_m = 0,
\end{equation}
satisfied by some of the generating vectors $x_i \in \mathbb{R}^n$ of the fan of $X$ (see Section~\ref{section:background} for details).  
A primitive relation of the form \eqref{CSR} corresponds to a $\mathbb{P}^m$-bundle structure on an open subset of $X$.
In \cite{team2023}, we introduced the invariant
\[
m(X) 
= 
\min\Big\{ m \in \mathbb{Z}_{>0} \ \Big| \ \text{there exists a primitive relation as in \eqref{CSR}} \Big\}
\ \in\ \{1,\dots,n\},
\]
called the \emph{minimal $\PP$-dimension} of $X$, and proved that a toric Fano manifold $X$ with $m(X)=1$ is not $2$-Fano. 
We also classified all $n$-dimensional toric Fano manifolds $X$ with $m(X)\ge n-2$, and showed that $\mathbb{P}^n$ is the only $2$-Fano manifold among them.
In this paper, we push this program further and prove the following:

\begin{theorem}\label{main_thm}
Let $X$ be a toric Fano manifold with $\dim(X)>2$ and $m(X)=2$. 
Then $X$ is not $2$-Fano.    
\end{theorem}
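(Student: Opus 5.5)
The plan follows the strategy announced in the abstract: relate $X$ to a toric manifold $Y$ carrying a $\PP^2$-bundle structure on a big open subset, compare $\ch_2(X)$ with $\ch_2(Y)$, and exhibit a surface $S\subset X$ with $\ch_2(X)\cdot S\le 0$.

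\emph{Step 1: the centered primitive relation.} Since $m(X)=2$, the fan of $X$ has a primitive relation $x_0+x_1+x_2=0$. This gives a dominating family of rational curves of anticanonical degree $3$, and a $\PP^2$-bundle structure $U\to V$ on an open subset $U\subset X$. The boundary of $U$ is the union of the torus-invariant divisors $D_y$ for rays $y$ that obstruct this bundle structure. These are the rays for which the star of the cone $\langle x_0,x_1,x_2\rangle$-type data fails to be "split", i.e.\ the rays entering other primitive collections that meet $\{x_0,x_1,x_2\}$.

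\emph{Step 2: construct $Y$.} Using the Fano condition and the classification of primitive relations meeting a centered one (as in \cite{team2023}), I would perform a sequence of toric blowdowns of divisors $D_y$, and flips where needed, to reach a toric manifold $Y$. The target is that $\{x_0,x_1,x_2\}$ becomes the only primitive collection containing any $x_i$. Then $Y$ is a $\PP^2$-bundle $\PP(\cO\oplus L_1\oplus L_2)$ over a toric base $Z$, at least on an open subset whose complement has codimension at least $2$. The key bookkeeping is to track how $\ch_2$ changes:
\begin{itemize}
\item for a blowup $X\to Y$ along a smooth invariant center of codimension $c$, there is an explicit formula for $\ch_2(X)-\pi^*\ch_2(Y)$ in terms of $E^2$ and $E\cdot\pi^*$(classes), with sign controlled by $c$;
\item flips are isomorphisms in codimension one, so $\ch_2=\tfrac12\sum D_i^2$ (toric formula) changes only through the intersection theory of small loci.
\end{itemize}

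\emph{Step 3: choose the test surface.} On $Y$, take the surface $S_Y$ swept by the lines in the $\PP^2$-fibres over a general invariant curve $C\subset Z$. A natural concrete choice is a fibre $\PP^2$ together with the ruled surface $\PP(\cO\oplus L_i|_C)$. Computing via $\ch_2(Y)=\tfrac12\sum_\rho D_\rho^2$, the relative contribution of the fibre directions is nonpositive on the ruled surface; more precisely, the term $\tfrac12(D_{x_0}^2+D_{x_1}^2+D_{x_2}^2)$ restricted to $S_Y$ gives a degree that cancels against the twists. The total pairing therefore reduces to a pairing on the base minus correction terms. I then pick $S\subset X$ as the strict transform of $S_Y$, chosen disjoint from, or transverse to, the exceptional and flipping loci, which is possible because these lie over the boundary. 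Since $\dim X>2$, $Z$ has positive dimension, so such $C$ and $S$ exist; when $\dim X=2$, $X=\PP^2$ and no such surface exists.

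\emph{Main obstacle.} The hardest step is Step 2 together with the comparison. One must show that the correction terms from the blowdowns and flips do not make $\ch_2(X)\cdot S$ positive. This requires a case analysis of the primitive relations that share elements with $x_0+x_1+x_2=0$; the Fano condition bounds their degrees, which restricts the possibilities. I would first treat the case where no other primitive collection meets $\{x_0,x_1,x_2\}$, so that $X$ is itself a $\PP^2$-bundle and the computation is direct. I would then handle relations of the form $x_i+y=z$ (degree $\ge1$) one type at a time, verifying in each case that the chosen $S$ avoids the modified locus or picks up a nonpositive correction.
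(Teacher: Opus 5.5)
\textbf{There is a genuine gap.} Your outline has the same architecture as the paper: blow down and flip $X$ to a $Y$ carrying a $\PP^2$-bundle off a set of codimension $\ge 2$, test $\ch_2$ on a ruled surface over a rational curve in the base, then compare. But both load-bearing steps are left as intentions, and the one concrete claim you make about the comparison is false.

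\textbf{Construction of $Y$.} What must be removed are the \emph{relevant} primitive collections, those of the form $P'\cup\{a\}$ with $P'\subsetneq\{x_0,x_1,x_2\}$ and a single $a$. These account exactly for the divisorial part of the complement of the bundle locus. Your target, that $\{x_0,x_1,x_2\}$ be the only primitive collection containing some $x_i$, is more than is needed, and nothing in your sketch produces it. Positivity of degrees leaves four shapes of relevant relations. The shape $x_i+x_j+a=2b$ is impossible, and $x_i+x_j+a=b$ always comes with $x_k+b=a$. So only $x_i+a=b$ and $x_i+x_j+a=b+c$ need treatment; both have degree one, hence are extremal and contractible. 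The structural fact you are missing is that for $m(X)\ge 2$ every generator has at most one opponent. Combined with Sato's description of primitive collections under blowdowns and Casagrande's contractibility criterion, it does three things:
\begin{itemize}
\item it bounds the number of blowdowns by three;
\item it shows the blowdowns create no new relevant collections and leave the surviving relations unchanged, except in the cyclic configuration $x_0+c=a$, $x_1+a=b$, $x_2+b=c$, which must be handled separately;
\item it shows the flip centres $V(b,c)$ are pairwise disjoint, so all flips can be done at once.
\end{itemize}
One must then still check that $\{x_0,x_1,x_2\}$ stays primitive and that no relevant collection survives on $Y$, which need not be projective.

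\textbf{Comparison step.} The loci modified by these maps do \emph{not} lie over the boundary of the bundle. In $Y$ the images of the blow-up centres have the form $V_Y(x_i,a)$ and the flipped loci are $V_Y(x_i,x_j,a)$. Both meet $U$ over the invariant divisor $V_W(a)$ of the base. So your surface meets them above every point of $C\cap V_W(a)$, and this set is nonempty when $C$ is the image of a very free curve, since such curves meet every effective divisor. Taking $C$ torus-invariant is worse: then $C\subset V_W(a)$ for several $a$, $S$ can lie inside a centre, and its strict transform is not defined.

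The paper instead proceeds as follows.
\begin{itemize}
\item It takes $C$ transverse to the invariant divisors and away from their pairwise intersections.
\item It takes $S=\pi^{-1}(C)\cap V(x_2)\cong\PP(\cO(a_0)\oplus\cO(a_1))$ for the two smallest degrees $a_0\le a_1\le a_2$. This gives $\ch_2(Y)\cdot S=\tfrac12(a_0+a_1)-a_2\le 0$. There is no base term, since divisors from $W$ pull back to multiples of a fibre; a fibre $\PP^2$ would give $\tfrac32>0$.
\item It shows by descending induction that each strict transform $S^\ell$ meets each centre only in fibre components or isolated points.
\item It applies de Jong--Starr's blow-up formula for $\ch_2$, writing each flip as the blow-up of $V(b,c)$ followed by the blow-down onto $V(x_i,x_j,a)$.
\end{itemize}
The result is $\ch_2(X^{\ell-1})\cdot S^{\ell-1}=\ch_2(X^\ell)\cdot S^\ell-\delta$, with $\delta\in\{\#(C\cap V_W(a)),\tfrac32\#(C\cap V_W(a))\}$ for an ordinary blowdown and $\delta\in\{\tfrac52\#(C\cap V_W(a)),0\}$ for a flip. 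In the exceptional configuration, however, one of the two blowdowns can \emph{raise} the value by $\#(C\cap V_W(c))$, and only their composite is nonpositive. So even your fallback of checking step by step that each correction is nonpositive fails as stated. This explicit bookkeeping is the heart of the proof, and it is exactly what your sketch defers.
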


Combining this theorem with prior work (\cite{Cas06}, \cite{Sato2012},  \cite{AN2016}, \cite{Sato2016}, \cite{SatoSuyama2020}, \cite{SanoSatoSuyama2020}, \cite{team2023}), and noting that products of Fano manifolds are not 2-Fano, we get the following classification of 2-Fano toric manifolds:
\begin{corollary}
    If \(X\) is a toric 2-Fano manifold of dimension \(n\), then either \(X \cong \PP^n\), or, if it exists, \(X\) satisfies the following conditions on its dimension, minimal \(\PP\)-dimension and Picard number:
    \[
        n \geq 9, \quad
        3 \leq m(X) \leq n-3, \quad
        4 \leq \rho_X < 2n - \frac{1}{30} \left( \sqrt{60n+1249} - 37 \right).
    \]
\end{corollary}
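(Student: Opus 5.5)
The statement is the Corollary, so the plan is to assemble known results: first eliminate each value of $m(X)$ that is excluded, then impose the remaining numerical restrictions. Let $X$ be a toric 2-Fano manifold of dimension $n$ with $X\not\cong\PP^n$.

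\emph{Step 1 (ruling out small $m$).} By \cite{team2023}, a toric Fano manifold with $m(X)=1$ is not 2-Fano. For $n>2$, \cref{main_thm} excludes $m(X)=2$. In dimension $n\le 2$, the only 2-Fano toric surface is $\PP^2$, because $\ch_2$ of a surface must be positive and this already singles out $\PP^2$ among the toric del Pezzo surfaces; so $\PP^2$ is the only case with $m(X)=2$. Hence $m(X)\ge 3$.

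\emph{Step 2 (ruling out large $m$).} The classification in \cite{team2023} of toric Fano manifolds with $m(X)\ge n-2$ shows that $\PP^n$ is the only 2-Fano one among them. So $m(X)\le n-3$. Combining with Step 1 gives $3\le m(X)\le n-3$, which already forces $n\ge 6$.

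\emph{Step 3 (dimension and Picard number).} The sharper bound $n\ge 9$ will come from the small-dimensional classifications: \cite{Nobili2011, Nobili2012, Sato2012, SatoSuyama2020, SanoSatoSuyama2020} verify the conjecture computationally in low dimensions, which I expect to cover $n\le 8$ with the available databases. For the Picard number:
\begin{itemize}
\item $\rho_X=1$ gives $\PP^n$.
\item $\rho_X=2$ gives Kleinschmidt's projectivized split bundles over $\PP^k$, which are handled in \cite{Sato2012}.
\item $\rho_X=3$ is treated by Batyrev's classification, used in \cite{Sato2016}.
\end{itemize}
Thus $\rho_X\ge 4$. For the upper bound, Casagrande \cite{Cas06} gives $\rho_X\le 2n$, with equality only for products of del Pezzo surfaces, and products are not 2-Fano. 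The refined bound $\rho_X<2n-\frac1{30}(\sqrt{60n+1249}-37)$ is taken from \cite{AN2016}, whose results describe toric Fano manifolds with $\rho$ near $2n$ as products or near-products; I would combine this with the fact that products are not 2-Fano.

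\emph{Main obstacle.} The real content is \cref{main_thm}; the corollary itself is bookkeeping. The delicate step is matching the precise numerical bounds in Step 3, namely the threshold $n\ge 9$ and the $\rho$ bound, to the exact statements in the cited literature. In particular I would need to check whether \cite{AN2016} yields the stated bound directly, or only after intersecting with the 2-Fano and non-product conditions.
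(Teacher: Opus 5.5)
Your proposal is correct and follows the paper's own route. The paper gives no separate argument for the corollary: it simply combines \cref{main_thm} with the results of \cite{team2023} for $m(X)=1$ and $m(X)\ge n-2$, the low-dimension and low-Picard-number results of \cite{Sato2012, Sato2016, SatoSuyama2020, SanoSatoSuyama2020}, and the bounds of \cite{Cas06, AN2016}, together with the fact that products of Fano manifolds are not 2-Fano. On the point you were unsure about: with $k=2n-\rho_X\ge 0$, the displayed upper bound is equivalent to $n<15k^2+37k+2$. So it is obtained directly from the splitting statement of \cite{AN2016} together with ``split $\Rightarrow$ product $\Rightarrow$ not 2-Fano'', and no extra argument is needed.
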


The proof of \cref{main_thm} builds on the strategy proposed in \cite{team2023}.
Let us briefly recall it here and refer to Section~\ref{section:strategy} for more details.
As noted above, if $X$ is a toric Fano manifold with $m(X)=m$, then $X$
contains a dense open subset $U\subset X$ admitting a $\PP^m$-bundle structure. 
If $\dim(X)>m$ and $X\setminus U$ has codimension at least $2$, then one can find a proper surface $S\subset U$ such that $\ch_2(X)\cdot S\leq 0$, showing that $X$ is not $2$-Fano (\cref{proposition: existence of surface with nonpositive intersection}). 
If $X\setminus U$ has codimension $1$, then the idea is to proceed in three steps:
\begin{itemize}
    \item[] {\bf Step 1.} Construct a birational map $f \colon X \dashrightarrow Y$ 
transforming $X$ into a proper toric variety $Y$ admitting a $\PP^m$-bundle structure on an open subset $V\subset Y$,
such that $Y\setminus V$ has codimension at least $2$ in $Y$. 

    \item[] {\bf Step 2.} Construct a surface $S \subset V$ such that $\ch_2(Y)\cdot S \leq 0$.

    \item[] {\bf Step 3.} Compare the intersection numbers $\ch_2(Y)\cdot S$ and $\ch_2(X)\cdot \tilde S$, where $\tilde S \subset X$ is the proper transform of $S$, to show that $\ch_2(X)\cdot \tilde S \leq 0$.  
    For this step, one needs an explicit description of the map $f \colon X \dashrightarrow Y$ constructed in Step~1 and a suitable choice of surface $S$ in Step~2.
\end{itemize}
In \cite{team2023} this strategy was carried out in the case $m=1$.
The case $m=2$ requires a significantly more refined argument: whereas for $m=1$,
Step~1 can be achieved with at most two smooth blowdowns, remaining within the projective category, the case $m=2$
requires flips in addition to blowdowns and may lead to a non-projective toric manifold $Y$.

The paper is organized as follows.  
In Section~\ref{section:background}, we review the necessary background on toric varieties, focusing on primitive relations, and discuss the geometric significance of the invariant  $m(X)$. 
In Section~\ref{section:strategy}, we describe the strategy outlined above in more detail and prove a few technical lemmas.
In Section~\ref{sec:construction maps}, we accomplish Step~1 in the case $m(X)=2$.
More precisely, we construct a sequence of blowdowns and flips
\[
X \to X' \dashrightarrow Y \, ,
\]
where $Y$ is a proper toric manifold admitting a $\PP^2$-bundle structure on the complement of a closed subset of codimension at least $2$. 
In Section~\ref{section:step2}, we produce a surface $S \subset Y$ such that $\ch_2(Y)\cdot S \leq 0$.
In fact, we show that Step~2 holds for any value of $m(X)$
(\cref{proposition: existence of surface with nonpositive intersection}).
Finally, in \cref{section: Chern character computations}, we compute how Chern characters change under blowdowns and flips, and show that $\ch_2(X)\cdot \tilde S \leq \ch_2(Y)\cdot S$, where $\tilde S \subset X$ is the proper transform of $S$.
In \cref{subsec: examples of exceptional cases}, we provide concrete examples of exceptional cases (\cref{def:exceptional_case}), which are treated separately in \cref{sec:construction maps} and \cref{section: Chern character computations}, illustrating why these cases require separate treatment.
In \cref{section: complications for m=3}, we indicate which parts of the proof of Step 1 extend to the case \(m(X) \geq 3\), highlight where complications arise, and outline possible approaches to address them.

\paragraph{Notation and conventions.}
We work over \(\CC\).
A \emph{toric manifold} means a smooth toric variety over $\CC$.  We denote by $\PP(E)$ the projective bundle \(\text{Proj}(\text{Sym}(E))\).
The Graded Ring Database \cite{GradedRingDatabase} collects the combinatorial data of toric Fano manifolds of dimension up to 6.
This data is also accessible via NormalToricVarieties package \cite{NormalToricVarietiesPackage} in Macaulay2 \cite{M2}.
We will refer to the specific examples using the way one can call them in Macaulay2, namely \textit{(dimension, Macaulay2 ID)}.

\paragraph{Acknowledgments.}
A great part of this work was developed during visits to Simons Laufer Mathematical Sciences Institute (formerly Mathematical Sciences Research Institute) as part of the ``Summer Research in Mathematics'' program in July 2023 (supported by the National Science Foundation under Grant No. DMS-1928930 and the National Security Agency under Grant No. H98230-23-1-0004) and to the Institut Henri Poincar\'e in March 2024.
We thank SLMath and the IHP for the financial support and the great working conditions provided to us during our visit, and we thank SLMath for the post-program support provided for further meetings.  We are very grateful to Cinzia Casagrande and Benjamin Nill for many informative discussions about  toric varieties.

Carolina Araujo was supported by grants from CNPq, Faperj and CAPES/COFECUB. Roya Beheshti was partially supported by NSF grant DMS-2101935 and Simons Foundation
grant 00007742. Ana-Maria Castravet was supported by ANR grant FanoHK and the Institut Universitaire de France. 
Nivedita Viswanathan was partially supported by the Engineering and Physical Sciences Research Council (EPSRC) Grant EP/V048619/1 ``K\"ahler-Einstein metrics on Fano Manifolds'' and EP/V056689/1 ``The Calabi Problem for smooth Fano threefolds''.

% ---
\section{Background on toric geometry}
\label{section:background}
% ---

In this section, we recall some important facts about toric geometry and fix the notation to be used throughout the paper.
We refer to \cite{Fulton} and \cite{CoxLittleSchenck2011} for the general theory of toric varieties. 

A toric variety is a normal $n$-dimensional complex variety $X$ that contains the torus $(\mathbb{C}^*)^n$ as a dense open subset, together with an action of $(\mathbb{C}^*)^n$ extending its natural action on itself. 
Recall that there is a one-to-one correspondence between $n$-dimensional toric varieties and lattice fans $\Sigma$ in $\mathbb{R}^n$.
Given a lattice fan $\Sigma$ in $\mathbb{R}^n$, we denote by $G(\Sigma)\subset N=\ZZ^n$ the set of generators of the $1$-dimensional cones in $\Sigma$.
Likewise, for each cone $\sigma\in \Sigma$, we denote by $G(\sigma)\subset G(\Sigma)$ the set of generators of the rays of $\sigma$.

\begin{notation}
    In order to simplify the notation, we write
    $\bar v$ to denote a tuple or set of vectors $\{ v_1, \dots, v_r \}$, and \(\sum \bar v\) to denote the sum $v_1 + \dots + v_r$.
    If $\bar\mu=( \mu_1 , \dots , \mu_s )$ is a tuple of $s$ integers
    and $\bar w$ is a tuple of $s$ vectors, we write $\bar \mu \cdot \bar w := \mu_1 w_1 + \dots + \mu_s w_s$.
    Finally, we will occasionally write \(\{v_1, \dots,\check{v_i}, \dots , v_r\}\) for \(\bar v \setminus \{v_i\}\).
\end{notation}

Given a toric variety $X$, we denote by $\Sigma_X$ the fan associated to $X$.
We recall that $X$ is \emph{smooth} if and only if the fan $\Sigma_X$ is \emph{unimodular}, i.e., for each cone $\sigma\in \Sigma_X$, the set of generators $G(\sigma)$ is part of a basis of $N$.
The variety $X$ is \emph{proper} if the fan $\Sigma_X$ is \emph{complete}, i.e., its support is the whole $\mathbb{R}^n$.
Finally, we recall that there is a one-to-one inclusion-reversing correspondence between cones in $\Sigma_X$ and orbit closures in $X$ for the torus action.
Given a cone $\sigma\in \Sigma_X$, we write $V(\sigma) \subset X$ for the corresponding orbit closure,
or $V(\bar v)$ when $G(\sigma) = \bar v$. One has $\dim(\sigma) = \codim_X V(\sigma)$.

\subsection{Primitive collections and relations}
\label{subsection_PC}

Let $X$ be an $n$-dimensional smooth and proper toric variety, and denote by $\Sigma=\Sigma_X$ the associated fan in $\mathbb{R}^n$.
We denote by $A_1(X)$ the group of algebraic $1$-cycles on $X$ modulo numerical equivalence.
Recall that there is an exact sequence (\cite[Proposition 6.4.1]{CoxLittleSchenck2011}):
\[
0 \ \longrightarrow \ A_1(X) \ \xrightarrow{\ \alpha \ } \ \ZZ^{G(\Sigma)} \ \xrightarrow{\ \beta \ } \ N \ \longrightarrow \ 0 \ ,
\]
where $\alpha\big([C]\big)= \big( C \cdot V(v)\big)_{v \in G(\Sigma)}$, and $\beta\big((\mu_v)_{v \in G(\Sigma)}\big)=\sum_{v \in G(\Sigma)} \mu_v v$. 
Therefore, the elements of $A_1(X)$ can be identified with integral relations among the vectors in $G(\Sigma)$,
and one can investigate and characterize geometric properties of curves in terms of combinatorial properties of these relations.

\begin{definition}[{\cite[Definition 2.6]{Bat91}}]\label{def:PC&PR}
A \emph{primitive collection} of $X$ is a nonempty set $P=\{v_1, \dots, v_r\} \subseteq G(\Sigma)$ such that 
\[
\langle v_1, \dots , v_r \rangle \notin \Sigma \quad \quad \text{and} \quad \quad \langle v_1, \dots,\check{v_i}, \dots , v_r \rangle \in \Sigma \quad \forall i=1,\dots, r.
\]
The \emph{focus} of the primitive collection $P$ is the minimal cone $\sigma(P)= \langle w_1, \dots , w_s \rangle \in \Sigma$ such that $v_1 + \cdots + v_r \in \sigma(P)$. 
There is therefore a relation
\[
r(P)\colon \,v_1 + \cdots + v_r = \mu_1 w_1 + \cdots + \mu_s w_s,
\]   
with $\mu_j \in \ZZ_{>0}$ for $j=1, \dots, s$. 

We call  $r(P)$ the \emph{primitive relation} associated to $P$. 
The \emph{order} of $P$ is $|P|=r$, and the \emph{degree} of $P$ is $\deg(P)=r- \sum_{j=1}^{s}\mu_j$.
We note that $\deg(P) = -K_X\cdot r(P)$, i.e., it coincides with the anticanonical degree of the curve associated to \(r(P)\), by \cite[Theorem 8.2.3 and Eq. (6.4.7)]{CoxLittleSchenck2011}.

The primitive collection $P$ is called \emph{centered} if $\sigma(P)=\{0\}$, i.e.,
\[
    r(P) \colon \, v_1 + \cdots + v_r=0.
\]

We denote by $\PC(\Sigma)$ or $\PC(X)$ the set of primitive collections of $X$, and by $\PR(\Sigma)$ or $\PR(X)$ the set of primitive relations of $X$.
\end{definition}

\begin{remark}
\label{remark: PC does not intersect focus}
By \cite[Proposition 3.1]{Bat91}, for any primitive collection $P$ on a proper toric manifold, we have $P \cap \sigma(P) = \varnothing$. In the notation above, $\{v_1, \dots, v_r\} \cap \{w_1, \dots ,w_s\} = \varnothing$.
\end{remark}

\begin{remark}
By \cite[Theorem 2.15]{Bat91}, primitive relations correspond to effective classes and generate the Mori cone of $X$, which is the cone $\NE(X)\subset A_1(X) \otimes_{\ZZ} \mathbb{R}$ generated by the classes of effective curves:
\[
\NE (X) = \sum_{P \in \PC(X)} \mathbb{R}_{\geq 0}\,r(P).
\]
\end{remark}

Next we discuss the notion of \emph{contractible} curve classes and their characterization in terms of primitive relations. 

\begin{definition}\label{def: contractible}
A primitive curve class $\gamma\in A_1(X)$ is said to be \emph{contractible} if there exists some irreducible curve having numerical class in $\mathbb{R}_{\geq 0}\gamma$, and there exists a proper toric variety $X_\gamma$ and an equivariant surjective morphism $\varphi_\gamma \colon X \to X_\gamma$ with connected fibers such that, for every irreducible curve $C \subset X$, 
\[\varphi_\gamma(C)= \{pt\} \Longleftrightarrow [C] \in \mathbb{R}_{\geq 0}\gamma.\]
We say that $\varphi_\gamma \colon X \to X_\gamma$ is the \emph{contraction} associated to the contractible curve class $\gamma$.
\end{definition}

\begin{proposition}[{\cite[Theorem 2.2]{Casagrande2003}}]
\label{prop: contractible}
Let $X$ be a proper toric manifold and let $\gamma \in A_1(X)$ be a curve class.
Then  $\gamma$ is a contractible curve class if and only if $\gamma$ is a primitive relation of the form 
    \[ r(P) \colon\, v_1 + \dots + v_r = \mu_1 w_1 + \dots +\mu_s w_s,
    \]
    and the following condition holds. For every cone $\tau= \langle z_1, \dots, z_\ell \rangle \in \Sigma_X$ (possibly $\tau=\{0\}$) such that 
    \[\begin{cases}
        \{z_1,\ldots,z_\ell\}\cap \{v_1,\ldots, v_r,w_1, \ldots,w_s\} = \varnothing \text{ and} \\
        \langle w_1, \dots, w_s, z_1, \dots, z_\ell \rangle \in \Sigma_X,
    \end{cases}\] 
    the following cone
    \[ 
        \langle v_1, \dots, \check{v_i}, \dots, v_r,  w_1, \dots, w_s, z_1, \dots, z_\ell  \rangle 
    \] 
    is also a cone of $\Sigma_X$, for each $i=1, \dots, r$.
\end{proposition}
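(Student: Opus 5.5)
Since the statement is Casagrande's characterization of contractible classes, I will argue it via the toric description of extremal contractions, treating both directions through the local combinatorics of the fan.

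\emph{Forward direction.} Suppose $\gamma$ is contractible with contraction $\varphi_\gamma\colon X\to X_\gamma$. Since $\varphi_\gamma$ is equivariant, it is induced by a map of fans $\Sigma_X\to\Sigma_{X_\gamma}$ in the same lattice $N$; the fan $\Sigma_{X_\gamma}$ is coarser, and its cones are unions of cones of $\Sigma_X$. Torus-invariant curves $V(\sigma)$, for $\sigma$ a wall (codimension-one cone), are contracted exactly when the wall lies in the interior of a cone of $\Sigma_{X_\gamma}$. The class $\gamma$ therefore spans an extremal ray of $\NE(X)$, because it is the intersection of $\NE(X)$ with the hyperplane $\varphi_\gamma^*H^\perp$ for an ample $H$ on $X_\gamma$ (or, in the proper non-projective case, it is the class of a contracted curve, which must be extremal). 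By \cite[Theorem 2.15]{Bat91} extremal rays are spanned by primitive relations, so $\gamma=r(P)$ for some $P$. Write $r(P)\colon v_1+\dots+v_r=\sum\mu_jw_j$.

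For the cone condition, take $\tau=\langle z_1,\dots,z_\ell\rangle$ as in the statement and set $\eta=\langle w_1,\dots,w_s,z_1,\dots,z_\ell\rangle\in\Sigma_X$. Enlarge $\eta$ to a maximal cone of the star of $\eta$ that contains some $v$'s; since $P$ is primitive and $P\cap\sigma(P)=\varnothing$, a chain-of-walls argument lets us reach a wall $\omega\supset\eta$ whose associated curve has class a positive multiple of $\gamma$. Its two adjacent maximal cones are $\omega+\langle v_i\rangle$ and $\omega+\langle v_k\rangle$. The cones of $\Sigma_{X_\gamma}$ containing $\eta$ in their relative interior must then be the unions $\bigcup_i\langle \bar v\setminus v_i,\bar w,\bar z\rangle$, and completeness of these unions forces every $\langle v_1,\dots,\check v_i,\dots,v_r,\bar w,\bar z\rangle$ to be a cone of $\Sigma_X$.

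\emph{Converse.} Assuming the condition, I will build $\Sigma_{X_\gamma}$ explicitly. For each cone $\eta=\langle\bar w,\bar z\rangle$ as above, replace the collection $\{\langle \bar v\setminus v_i,\bar w,\bar z\rangle\}_i$ by the single cone $\langle\bar v,\bar w,\bar z\rangle$, and keep all cones not of this form. The steps are then:
\begin{enumerate}
\item Check that $\langle\bar v,\bar w,\bar z\rangle$ is strongly convex and is exactly the union of the cones $\langle\bar v\setminus v_i,\bar w,\bar z\rangle$. This uses the relation $\sum v_i=\sum\mu_jw_j$, which gives a standard stellar decomposition.
\item Check that the resulting collection is a fan.
\item Verify that the identity on $N$ induces a proper surjective morphism with connected fibers contracting precisely the curves of class in $\mathbb{R}_{\ge0}\gamma$.
\end{enumerate}
The contracted invariant curves are those of the walls $\langle \bar v\setminus\{v_i,v_k\},\bar w,\bar z\rangle$, and their classes are multiples of $r(P)$ by the wall relation.

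The main obstacle is step 2: showing that cones of different types intersect in common faces, i.e.\ that no non-merged cone meets a merged cone badly. I would handle it by showing that any maximal cone of $\Sigma_X$ containing some $\bar v\setminus v_i$ together with $\bar w$ is among the merged ones, which follows from the hypothesis applied to $\tau$ equal to its remaining rays. Alternatively, I would simply invoke \cite[Theorem 2.2]{Casagrande2003}, which is where the statement is proved.
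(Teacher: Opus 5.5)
The paper gives no proof of this statement; it simply quotes \cite[Theorem 2.2]{Casagrande2003}, which is also your fallback. Your self-contained sketch, however, has genuine gaps.

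\textbf{Extremality.} In the forward direction you assert that a contractible class spans an extremal ray of $\NE(X)$, either via an ample $H$ on $X_\gamma$ or because ``a contracted class must be extremal''. Neither is available. $X_\gamma$ is only assumed proper. Moreover, as the remark right after the statement says, contractible classes need not be extremal even for projective $X$; their contractions are then non-projective. The paper uses the statement in exactly this generality, on models such as $X'$ that need not be Fano. So Batyrev's theorem (which is anyway stated for projective $X$) does not give $\gamma=r(P)$.

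What is needed is a direct wall analysis:
\begin{itemize}
\item A contracted invariant curve $V(\omega)$ has wall relation with coefficient $1$ on the two rays off $\omega$. So its class is primitive, hence equal to $\gamma$, and all contracted walls carry the same relation.
\item The maximal cones of $\Sigma_X$ inside a maximal cone $C$ of $\Sigma_{X_\gamma}$ containing a contracted wall are connected through walls interior to $C$. Each crossing swaps one element of $P:=\{x:\gamma\cdot V(x)>0\}$ for another.
\item Hence these maximal cones are exactly the cones spanned by $(P\cup\bar w\cup\bar z_0)\setminus\{x\}$, $x\in P$, for some $\bar z_0$, where $\bar w=\{u:\gamma\cdot V(u)<0\}$. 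From this, $P\in\PC(X)$ and $r(P)=\gamma$ follow.
\end{itemize}

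\textbf{Cone condition.} Your argument has the same defect. ``Enlarge $\eta$ to a maximal cone containing some $v$'s'' presupposes that some $\langle\eta,v_k\rangle$ is a cone, which is part of the conclusion. The union structure is then asserted rather than derived. Given the description above, the missing step is this: no maximal cone of $\Sigma_X$ containing $\sigma(P)$ can be a cone of $\Sigma_{X_\gamma}$. Otherwise $\langle\bar w\rangle$ and $\langle\bar v,\bar w\rangle$ (the latter a face of a merged cone) would be two distinct cones of $\Sigma_{X_\gamma}$ both containing $\sum v_i$ in their relative interiors. So every such maximal cone lies in a merged cone, and the condition follows by taking faces.

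\textbf{Fiber type.} Both directions also tacitly assume $\varphi_\gamma$ is birational, i.e.\ same lattice $N$ and a coarser fan. The statement includes centered relations ($s=0$), whose contractions are $\PP^{r-1}$-bundles. The paper uses this case, e.g.\ in \cref{ex:singular_flip}, where contractibility of $s_x$ yields a global $\PP^3$-bundle.

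For $s=0$ your Step~1 fails: $\langle\bar v,\bar z\rangle$ contains the linear span of $\bar v$ because $\sum v_i=0$, so it is not strongly convex. $\Sigma_{X_\gamma}$ must instead be built in the quotient lattice $N/(N\cap\operatorname{span}\bar v)$, from the images of the cones $\langle\bar z\rangle$ with $\bar z\cap\bar v=\varnothing$. Correspondingly, in the forward direction $\varphi_\gamma$ comes from a lattice surjection with kernel $\operatorname{span}\bar v$, not from a coarsening of $\Sigma_X$.

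\textbf{Smaller points in the converse.} In Step~2 you must discard every cone of $\Sigma_X$ meeting the interior of a merged cone (for instance the contracted walls), not only the maximal ones. Step~3 must handle arbitrary irreducible curves, not only invariant ones, and in both implications of the definition of contractibility.
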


\begin{remark}
A primitive integral class generating an extremal ray of $\NE(X)$ is called an \emph{extremal class} and is necessarily contractible. 
However, even when $X$ is a projective variety, there may exist contractible curve classes that are not extremal. In such cases, the associated contractions are not projective. 

When $X$ is a toric Fano manifold, any primitive relation of $X$ having degree $1$ is extremal (\cite[Corollary 4.4]{Casagrande2003}). 
\end{remark}

% -
\subsection{Primitive collections and relations under birational transformations}
\label{subsection: contractibility under birational transformations}
% -

In this subsection, we describe the behavior of primitive collections, primitive relations and contractibility under smooth toric blowdowns, blowups and flips.
We start by recalling how the fan of a toric variety changes under a blowup.

\begin{proposition}[{\cite[Definition 3.3.17 and p.~133]{CoxLittleSchenck2011}}]
\label{proposition: fan of a blowup}
Let $Y$ be a toric manifold and let $\bar a$ be a subset of $G(\Sigma_Y)$ that spans a cone $\langle \bar a \rangle\in \Sigma_Y$.
Let $X$ be the toric manifold obtained from $Y$ by blowing up $V(\bar a)$.
Then $G(\Sigma_X) = G(\Sigma_Y) \sqcup \{b\}$, where $b = \sum \bar a$, and $\Sigma_X$ consists precisely of the following cones:
    \begin{itemize}
        \item $\sigma \in \Sigma_Y$ such that $\bar a \not\subseteq G(\sigma)$,
        \item $\langle \bar v , b \rangle$, for any $\sigma \in \Sigma_Y$ containing $\bar a$ and any $\bar v$ such that $\bar a \not\subseteq \bar v \subseteq G(\sigma)$.
    \end{itemize}
    In particular, it follows from this description that if $\langle \bar v, b \rangle$ is a cone in $\Sigma_X$, then $\langle \bar v, \bar a \rangle$ is a cone in $\Sigma_Y$.
\end{proposition}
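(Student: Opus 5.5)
The statement is the standard description of the fan of the blowup along the orbit closure \(V(\bar a)\). The main point is the last claim: if \(\langle \bar v, b\rangle\in\Sigma_X\), then \(\langle \bar v,\bar a\rangle\in\Sigma_Y\). Everything else is the star subdivision construction, which we take from \cite[Definition 3.3.17]{CoxLittleSchenck2011}.

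\textbf{Step 1 (the fan is the star subdivision).} Blowing up the smooth orbit closure \(V(\bar a)\) in the toric manifold \(Y\) is the toric morphism of the star subdivision \(\Sigma_Y^*(\langle\bar a\rangle)\). By \cite[Proposition 3.3.15 and p.~133]{CoxLittleSchenck2011}, that subdivision is obtained as follows:
\begin{itemize}
\item keep every cone \(\sigma\) not containing \(\langle \bar a\rangle\) as a face;
\item replace each cone \(\sigma\supseteq\langle\bar a\rangle\) by the cones \(\langle \tau, b\rangle\), with \(b=\sum\bar a\) and \(\tau\) a face of \(\sigma\) not containing \(\langle\bar a\rangle\).
\end{itemize}
Faces of a simplicial cone correspond to subsets of \(G(\sigma)\). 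So "\(\tau\) a face of \(\sigma\) not containing \(\langle\bar a\rangle\)" means exactly \(\tau=\langle\bar v\rangle\) with \(\bar v\subseteq G(\sigma)\) and \(\bar a\not\subseteq\bar v\). This gives the two bullets of the statement.

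The new ray is generated by \(b\), and no old ray disappears, since each ray \(\langle v\rangle\) either does not contain \(\langle\bar a\rangle\) or is obtained from \(\bar v=\varnothing\). Hence \(G(\Sigma_X)=G(\Sigma_Y)\sqcup\{b\}\). The vector \(b\) is primitive and not already a ray generator: \(\bar a\) is part of a basis by unimodularity, so \(b\) is primitive. If \(|\bar a|\ge 2\), then \(b\) lies in the relative interior of \(\langle\bar a\rangle\), so it is not an old ray. (When \(|\bar a|=1\) the blowup is the identity, which we exclude as the trivial case.)

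\textbf{Step 2 (the final claim).} Let \(\langle\bar v,b\rangle\in\Sigma_X\). Every cone containing the new ray \(b\) arises from the second bullet, since first-bullet cones are cones of \(\Sigma_Y\) and do not have \(b\) as a ray. Here we must rule out that a cone written as \(\langle\bar v,b\rangle\) coincides with an old cone. This follows because the rays of a simplicial cone are determined by the cone, and \(b\notin G(\Sigma_Y)\).

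Therefore there is some \(\sigma\in\Sigma_Y\) with \(\bar a\cup\bar v\subseteq G(\sigma)\). Then \(\langle\bar v,\bar a\rangle\) is a face of \(\sigma\), hence a cone of \(\Sigma_Y\).

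The only mildly delicate point is this uniqueness-of-generators bookkeeping, so that a cone of the form \(\langle\bar v,b\rangle\) can only come from the second bullet. The rest is a citation.
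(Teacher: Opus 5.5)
Your proposal is correct and follows the same route as the paper, which gives no argument beyond citing the star-subdivision description in Cox--Little--Schenck and reading off the final claim from the second bullet, exactly as you do in Step~2. Your observation that the statement tacitly requires $|\bar a|\ge 2$ is a correct and worthwhile addition: otherwise $b\in G(\Sigma_Y)$, the blowup is an isomorphism, and the claim $G(\Sigma_X)=G(\Sigma_Y)\sqcup\{b\}$ fails.
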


The next two results of Sato describe the behavior of primitive collections under smooth toric blowdowns and blowups.

\begin{proposition}[{\cite[Corollary 4.9]{Sato2000}}]
\label{prop:PC_blowdown}
Let $X$ be a proper toric manifold, and let $f \colon X \rightarrow Y$ be the contraction associated to a contractible class in $\mathrm{NE}(X)$, corresponding to a primitive relation of the form 
\[r(Q)\colon t_1 + \dots+ t_s = z.\]
Then the fan $\Sigma_Y$ is obtained from $\Sigma_X$ by removing the ray generated by $z$, and
$X$ is the blowup of $Y$ along $V( t_1,\dots,t_s)$.
Furthermore, the primitive collections of $Y$ are precisely the following $P_Y\in \PC(Y)$:
\begin{itemize}
    \item $P_Y=P_X$ for some $P_X \in \PC(X)$ such that $z \notin P_X$ and $P_X \neq Q=\{t_1, \dots, t_s\}$;
    \item $P_Y=(P_X \setminus \{z\}) \cup \{t_1,\dots,t_s\}$ for some $P_X \in \PC(X)$ such that $z \in P_X$ and $(P_X \setminus \{z\}) \cup S \notin \PC(X)$ for any proper subset $S \subsetneq \{t_1,\dots,t_s\}$.
\end{itemize}
\end{proposition}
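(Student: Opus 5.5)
The statement is a cited result of Sato, so I would prove it directly from the combinatorial description of blowups in \cref{proposition: fan of a blowup} together with the contractibility criterion \cref{prop: contractible}.

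\textbf{Step 1: $\Sigma_Y$ is $\Sigma_X$ with the ray through $z$ removed.} Since $r(Q)$ is contractible with focus $\langle z\rangle$, \cref{prop: contractible} applies with $w_1=z$. For every cone $\tau$ with $\langle z,\tau\rangle\in\Sigma_X$ and $\tau$ disjoint from $Q\cup\{z\}$, it gives that $\langle t_1,\dots,\check t_i,\dots,t_s,z,\tau\rangle\in\Sigma_X$ for each $i$. Consequently the star of $z$ in $\Sigma_X$ is exactly the star subdivision of the cones $\langle t_1,\dots,t_s,\tau\rangle$ at $z=\sum\bar t$. I then define $\Sigma_Y$ to consist of the cones of $\Sigma_X$ not containing $z$, together with the cones $\langle \bar t,\tau\rangle$. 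I check that this is a unimodular complete fan: unimodularity follows because $\{t_1,\dots,\check t_i,\dots,t_s,z\}\cup G(\tau)$ is a basis and $z=\sum\bar t$. \cref{proposition: fan of a blowup} then shows that its star subdivision at $\langle\bar t\rangle$ recovers $\Sigma_X$, so $X=\mathrm{Bl}_{V(\bar t)}Y$. The induced toric morphism contracts exactly the curves of class proportional to $r(Q)$, so by uniqueness of the contraction it is $f$.

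\textbf{Step 2: primitive collections of $Y$.} A cone of $\Sigma_Y$ is either a cone of $\Sigma_X$ avoiding $z$ or contains all of $\bar t$. The description in \cref{proposition: fan of a blowup} gives the following criterion for a set $A\subseteq G(\Sigma_Y)$:
\begin{itemize}
\item if $\bar t\not\subseteq A$, then $A\in\Sigma_Y$ if and only if $A\in\Sigma_X$;
\item if $\bar t\subseteq A$, then $A\in\Sigma_Y$ if and only if $(A\setminus\{t_i\})\cup\{z\}\in\Sigma_X$ for some (equivalently every) $i$.
\end{itemize}
I then take a minimal non-face $P_Y$ of $\Sigma_Y$ and split into two cases.
\begin{itemize}
\item If $\bar t\not\subseteq P_Y$, all of its proper subsets also avoid containing $\bar t$, so $P_Y$ is a primitive collection of $X$ not containing $z$ and different from $Q$. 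Note that $Q$ itself is a face of $\Sigma_Y$.
\item If $\bar t\subseteq P_Y$, set $P_X=(P_Y\setminus\bar t)\cup\{z\}\cup S$ for a suitable $S\subsetneq\bar t$, and use the criterion to translate minimality.
\end{itemize}
For the converse, I start from $P_X\ni z$ and show that $P_Y=(P_X\setminus\{z\})\cup\bar t$ is a non-face which becomes a face after removing any element. Removing some $t_i$ corresponds to the subset $(P_X\setminus\{z\})\cup(\bar t\setminus t_i)$, which fails to be a face of $X$ exactly when some $(P_X\setminus\{z\})\cup S$ is primitive. This is where the stated non-primitivity hypothesis on $S\subsetneq\bar t$ enters.

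\textbf{Main obstacle.} The main obstacle is this last bookkeeping in the case $z\in P_X$. One must show both that $P_X\cap\bar t=\varnothing$, which uses \cref{remark: PC does not intersect focus}-type arguments via contractibility, and that the hypothesis on proper subsets $S\subsetneq\bar t$ exactly characterizes primitivity of $(P_X\setminus\{z\})\cup\bar t$. Done carefully, this also shows the resulting correspondence is well defined.
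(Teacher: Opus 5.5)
The paper gives no proof of this statement; it cites Sato. Your route, rebuilding $\Sigma_Y$ from the contractibility criterion and then comparing minimal non-faces via your two-case face criterion, is viable. Step~1 and the case $\bar t\not\subseteq P_Y$ (hence the first bullet) work as you describe.

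\textbf{The gap.} It lies in the second bullet, which is exactly the part you defer as ``bookkeeping''.

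First, your choice $P_X=(P_Y\setminus\bar t)\cup\{z\}\cup S$ with a ``suitable'' $S\subsetneq\bar t$ only works for $S=\varnothing$, because a primitive collection of $X$ containing $z$ never meets $\bar t$. Indeed, if $z,t_1\in P_X$, then $P_X\setminus\{t_1\}$ is a cone through $z$. By \cref{prop: contractible}, $P_X$ then lies in some cone $\langle \bar t\setminus\{t_j\},z,\tau\rangle$, unless $\bar t\subseteq P_X$, in which case $P_X\supsetneq Q$. Both contradict primitivity. So you are forced to take $P_X=B\cup\{z\}$ with $B:=P_Y\setminus\bar t$. You must then show $B\cup\{z\}\notin\Sigma_X$, but your criterion only gives $B\cup(\bar t\setminus\{t_i\})\cup\{z\}\notin\Sigma_X$.

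Second, in the converse direction you only discuss deleting some $t_i$ from $B\cup\bar t$. Deleting an element $b\in B$ is not addressed. Also, the claim that $B\cup(\bar t\setminus\{t_i\})$ is a non-face ``exactly when some $B\cup S$ is primitive'' is unjustified, since a priori the primitive collection inside it might omit elements of $B$.

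\textbf{The missing observation.} All of this is closed by one fact already implicit in your Step~1. For $B\subseteq G(\Sigma_Y)$ with $B\cap\bar t=\varnothing$,
\[
\langle B,z\rangle\in\Sigma_X \iff \langle B,\bar t\rangle\in\Sigma_Y .
\]
The forward direction is the last sentence of \cref{proposition: fan of a blowup}. The backward direction holds because the same proposition gives $\langle B,\bar t\setminus\{t_i\},z\rangle\in\Sigma_X$, of which $\langle B,z\rangle$ is a face.

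Apply this to $B$ and to each $B\setminus\{b\}$, and apply your criterion to $B\cup(\bar t\setminus\{t_i\})$. This gives: $B\cup\bar t\in\PC(Y)$ if and only if $B\cup\{z\}\in\PC(X)$ and $B\cup(\bar t\setminus\{t_i\})\in\Sigma_X$ for all $i$.

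Moreover, when $B\cup\{z\}\in\PC(X)$, each $(B\setminus\{b\})\cup\bar t$ is a cone of $Y$, so $(B\setminus\{b\})\cup(\bar t\setminus\{t_i\})\in\Sigma_X$. Hence any primitive collection of $X$ inside $B\cup(\bar t\setminus\{t_i\})$ contains all of $B$, i.e.\ it has the form $B\cup S$ with $S\subsetneq\bar t$. This is precisely what makes the proper-subset condition in the statement equivalent to primitivity of $B\cup\bar t$, and both directions of the second bullet follow.
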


\begin{remark} \label{rem:class contracted curve}
    In the setup of \cref{prop:PC_blowdown}, for any $y\in V( t_1,\dots,t_s)$, $f^{-1}(y)\cong \PP^{s-1}$. The primitive relation $r(Q)$ is precisely the class in $A_1(X)$ of a line on such $\PP^{s-1}$.
\end{remark}

\begin{proposition}[{\cite[Theorem 4.3]{Sato2000}}]
\label{prop:PC_blowup}
Let $X$ be a toric manifold, and let $f \colon X \rightarrow Y$ be the contraction associated to a contractible class in $\mathrm{NE}(X)$, corresponding to a primitive relation of the form 
$r(Q)\colon \sum \bar t = z$.
Then the primitive collections of $X$ are precisely the following $P_X\in \PC(X)$:
\begin{itemize}
    \item $P_X = \bar t$;
    \item $P_X=P_Y$ for some $P_Y \in \PC(Y)$ such that $\bar t \not\subseteq P_Y$;
    \item minimal elements (with respect to inclusion) among sets of the form $P_X=(P_Y \setminus \bar t) \cup \{z\}$ for some $P_Y \in \PC(Y)$ such that $P_Y \cap \bar t \neq \varnothing$.
\end{itemize}
\end{proposition}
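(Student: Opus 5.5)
The plan is to make everything combinatorial. By \cref{prop:PC_blowdown}, $X$ is the blowup of $Y$ along $V(\bar t)$ and $z=\sum\bar t$. So \cref{proposition: fan of a blowup} (with $\bar a=\bar t$, $b=z$) gives $\Sigma_X$ in terms of $\Sigma_Y$. Primitive collections of $X$ are by definition the minimal subsets of $G(\Sigma_X)$ that do not span a cone, the ``minimal non-faces''. I will therefore first write down a membership criterion for cones of $\Sigma_X$, then describe the family $\mathcal N_X$ of non-faces of $X$ as an upward-closed family with explicit generators, and finally extract its minimal elements.

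\textbf{Step 1 (cone criterion).} From \cref{proposition: fan of a blowup}, a set $S\subseteq G(\Sigma_X)$ spans a cone of $\Sigma_X$ if and only if one of the following holds:
\begin{itemize}
\item $z\notin S$, $\bar t\not\subseteq S$, and $S$ spans a cone of $\Sigma_Y$;
\item $z\in S$, $\bar t\not\subseteq S$, and $(S\setminus\{z\})\cup\bar t$ spans a cone of $\Sigma_Y$.
\end{itemize}
Recall also that $\bar t$ spans a cone of $\Sigma_Y$, so no $P_Y\in\PC(Y)$ is contained in $\bar t$.

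\textbf{Step 2 (generators of non-faces).} A subset of $G(\Sigma_Y)$ is a non-face of $Y$ exactly when it contains some $P_Y\in\PC(Y)$. I claim $\mathcal N_X$ is the upward closure of three families:
\begin{itemize}
\item (a) the set $\bar t$;
\item (b) the sets $P_Y\in\PC(Y)$;
\item (c) the sets $(P_Y\setminus\bar t)\cup\{z\}$ with $P_Y\in\PC(Y)$ and $P_Y\cap\bar t\neq\varnothing$.
\end{itemize}
Members of (b) with $\bar t\subseteq P_Y$ already contain (a), so they are redundant.

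Take first $S$ with $z\notin S$. By Step 1, $S$ is a non-face iff $\bar t\subseteq S$ or $S\supseteq P_Y$ for some $P_Y$.

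Now write $S=R\cup\{z\}$. By Step 1, $S$ is a non-face iff $\bar t\subseteq R$ or $R\cup\bar t\supseteq P_Y$ for some $P_Y$. In the latter case $P_Y\setminus\bar t\subseteq R$. If $P_Y\cap\bar t=\varnothing$, then $S\supseteq P_Y$, which is family (b). Otherwise $S$ contains a set from family (c).

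Conversely, each generator is a non-face. For (c), note that $(P_Y\setminus\bar t)\cup\bar t\supseteq P_Y$.

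\textbf{Step 3 (minimal elements).} $\bar t$ is minimal: its proper subsets do not contain $\bar t$, are faces of $Y$, and do not involve $z$.

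A set $P_Y$ with $\bar t\not\subseteq P_Y$ is minimal. Any non-face generator contained in it avoids $z$, so it is either $\bar t$, which is excluded, or some $P'_Y\subseteq P_Y$. Primitivity then forces $P'_Y=P_Y$.

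Conversely, any $P\in\PC(X)$ with $z\notin P$ is either $\bar t$ or contains some $P_Y$; minimality then gives $P=P_Y$ with $\bar t\not\subseteq P_Y$.

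Every $P\in\PC(X)$ with $z\in P$ must contain a generator of type (c). Indeed, $z\in P$ rules out $P=\bar t$ and $P=P_Y$, and $P$ cannot strictly contain either of them by minimality. Hence $P$ is a minimal element among the type (c) sets, as the statement asserts.

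The main obstacle is the converse: a set $C=(P_Y\setminus\bar t)\cup\{z\}$ that is minimal among the (c)-sets must be minimal in all of $\mathcal N_X$. For this I must show that $C$ contains neither $\bar t$ nor any $P'_Y$ with $\bar t\not\subseteq P'_Y$.
\begin{itemize}
\item $C$ cannot contain $\bar t$, because $C\setminus\{z\}$ is disjoint from $\bar t$ and $\bar t\neq\varnothing$.
\item Suppose $C\supseteq P'_Y$. Since $z\notin G(\Sigma_Y)$, we get $P'_Y\subseteq P_Y\setminus\bar t\subsetneq P_Y$; the inclusion is strict because $P_Y\cap\bar t\neq\varnothing$. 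This contradicts the primitivity of $P_Y$.
\end{itemize}
Hence the minimal non-faces containing $z$ are exactly the inclusion-minimal sets of type (c). Together with Step 3, this yields the three families in the statement.
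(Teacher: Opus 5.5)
Your proof is correct and complete. There is no in-paper argument to compare it with: the paper does not prove this statement and imports it from \cite[Theorem 4.3]{Sato2000}.

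You derive the statement directly from the star-subdivision description of $\Sigma_X$ in \cref{proposition: fan of a blowup}. From \cref{prop:PC_blowdown} you use only its first assertion, that $f$ is the blowup of $Y$ along $V(\bar t)$ with $z=\sum\bar t$. That assertion is a separately cited input, so there is no circularity.

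The bookkeeping is sound:
\begin{itemize}
\item You treat primitive collections as the minimal non-faces of the simplicial fan, which is legitimate because faces of simplicial cones are closed under taking subsets.
\item The cone criterion of Step 1 restates the blowup fan accurately.
\item The case split $z\notin S$ versus $z\in S$ in Step 2 is exhaustive.
\item The final check is the key point for the third family. A minimal (c)-set cannot contain $\bar t$, because $C\setminus\{z\}$ is disjoint from $\bar t$. It cannot contain any $P'_Y$ either, because $P'_Y$ would lie in $P_Y\setminus\bar t\subsetneq P_Y$.
\end{itemize}

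One step deserves an extra line. In Step 3 you conclude ``$P=P_Y$ with $\bar t\not\subseteq P_Y$''. If $\bar t\subseteq P_Y=P$, minimality would force $P=\bar t$. That is impossible, since $\bar t$ spans a cone of $\Sigma_Y$ and $P_Y$ does not.

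Compared with citing Sato, your route makes the result self-contained and transparent. It shows the statement is purely about star subdivisions of a simplicial fan. It needs no Fano or projectivity hypothesis, and it uses nothing from contractibility beyond identifying $f$ as a smooth blowup.
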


Next, we describe the behavior of primitive collections under blowdowns and certain flips.
Let us explain the setup in which the flips appear in our context.

\begin{setup}
\label{setup_flips}
Let $X$ be a proper toric manifold. Let $Q$ be a primitive collection with a contractible primitive relation of the form
    \[r_X(Q)\colon \, a_1+\dots+a_m=c_1+\dots+c_\ell, \]
with $m, \ell \geq 2$  and  all the $a_i$'s and $c_j$'s distinct.
Let $\tilde X\to X$ be the blowup of $X$ along $V(\bar c)$.
The \emph{flip corresponding to $r_X(Q)$} is the rational map $X \dashrightarrow Y$ obtained by composing the blowup and blowdown associated the primitive relations $\sum\bar c = z$ and $\sum\bar a = z$ on $\tilde X$, respectively:
    \[\begin{tikzcd}
    	& {\tilde X} \\
    	X && Y.
    	\arrow["{\sum\bar c = z}"', from=1-2, to=2-1]
    	\arrow["{\sum\bar a = z}", from=1-2, to=2-3]
    \end{tikzcd}\]
\end{setup}

We will need the following results from \cite{Casagrande2003} in order to study contractibility under flips.

\begin{proposition}[{\cite[Lemma 5.4 and Lemma 5.5(i)]{Casagrande2003}}]
\label{proposition: conractibility on the blowup}
    Let $X$ be a proper toric manifold, $\langle\bar c\rangle\in\Sigma_X$ a cone, and
    $\tilde X \to X$ the blowup of $V_X(\bar c)$.
    Let $P \in \PC(X)$ be a primitive collection.
    \begin{enumerate}[label=(\roman*)]
        \item \label{item: contractibility type a}
        If $P \cap \bar c = \varnothing$, then $P$ is a primitive collection in $\tilde X$, and $r_{\tilde X} (P)$ is contractible in $\tilde X$ if and only if $r_X (P)$ is contractible in $X$.
        \item \label{item: contractibility type c}
        If $P \cap \bar c \neq \varnothing$ and $\bar c \not\subseteq P$, then $P$ is a primitive collection in $\tilde X$, and
        $r_{\tilde X} (P)$ is contractible in $\tilde X$ if and only if
        $r_X(P)$ is contractible on $X$ and $\langle \sigma_X(P), \bar c \rangle$ is not a cone of $X$. The latter condition means that $V_X(\sigma_X(P))$ does not intersect the center $V_X(\bar c)$ of the blowup.
    \end{enumerate}
\end{proposition}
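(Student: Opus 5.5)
The plan is to derive both statements from two earlier results. The first is Sato's description of primitive collections under blowups (\cref{prop:PC_blowup}), applied with $\bar t=\bar c$ and new ray $z=b=\sum\bar c$. The second is Casagrande's cone-theoretic criterion for contractibility (\cref{prop: contractible}). The explicit description of $\Sigma_{\tilde X}$ in \cref{proposition: fan of a blowup} translates cone conditions between $X$ and $\tilde X$. A cone $\sigma\in\Sigma_X$ survives in $\tilde X$ exactly when $\bar c\not\subseteq G(\sigma)$. The new cones are $\langle \bar v,b\rangle$ with $\bar c\not\subseteq\bar v$ and $\langle\bar v,\bar c\rangle\in\Sigma_X$.

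\textbf{Step 1: $P$ stays primitive, and its relation.} In both (i) and (ii) we have $\bar c\not\subseteq P$. So the second bullet of \cref{prop:PC_blowup} shows $P\in\PC(\tilde X)$.

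Next we determine the focus $\sigma_{\tilde X}(P)$.
\begin{itemize}
\item If $\bar c\not\subseteq G(\sigma_X(P))$, then $\sigma_X(P)$ is still a cone of $\tilde X$. Minimality of the focus is unaffected, since every cone of $\tilde X$ not containing $b$ is a cone of $X$. Hence $r_{\tilde X}(P)=r_X(P)$.
\item If $\bar c\subseteq G(\sigma_X(P))$, we rewrite $\sum\bar\mu\cdot\bar w$ by extracting $\mu_0 b$, where $\mu_0=\min_{c_k}\mu_{c_k}$. The resulting focus is $\langle b,\bar w\setminus\{c_{k_0}\}\rangle$ or a face of it.
\end{itemize}
In case (ii), if $r_X(P)$ is contractible and $\langle\sigma_X(P),\bar c\rangle\notin\Sigma_X$, then $\bar c\not\subseteq\sigma_X(P)$, so the first situation applies. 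For the remaining implication of (ii), we will see that the focus is not needed.

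\textbf{Step 2: part (i).} Assume $P\cap\bar c=\varnothing$. We compare the test cones $\tau$ in \cref{prop: contractible} for $X$ and $\tilde X$.

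Suppose $r_X(P)$ is contractible, and let $\tau\in\Sigma_{\tilde X}$ be disjoint from $P\cup\sigma(P)$ with $\langle\sigma(P),\tau\rangle\in\Sigma_{\tilde X}$.
\begin{itemize}
\item If $b\notin\tau$, all the cones involved are cones of $X$. The required cone $\langle P\setminus\{v_i\},\sigma(P),\tau\rangle$ is in $\Sigma_X$ by contractibility. It does not contain $\bar c$, because $\langle\sigma(P),\tau\rangle$ doesn't and $P\cap\bar c=\varnothing$. So it survives in $\tilde X$.
\item If $b\in\tau$, write $\tau=\langle\bar v,b\rangle$. Apply contractibility in $X$ to the cone $\langle \bar v,\bar c\rangle$ (suitably intersected away from $\sigma(P)$). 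This produces a cone of $X$ containing $\bar c$. Then the blowup description returns the needed cone with $b$ in place of $\bar c$.
\end{itemize}
The converse runs the same dictionary backwards. Given a test cone $\tau$ in $X$, we either use it directly in $\tilde X$ when it avoids $\bar c$, or replace it by $\langle\tau\setminus\{c_k\},b\rangle$. If $\sigma_{\tilde X}(P)$ involves $b$ (the second situation of Step 1), the same bookkeeping applies with $\bar c$ replaced by $b$ in the focus.

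\textbf{Step 3: part (ii).} Assume $P\cap\bar c\neq\varnothing$ and $\bar c\not\subseteq P$.

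\emph{Necessity of the cone condition.} Suppose $r_X(P)$ is contractible and $\langle\sigma_X(P),\bar c\rangle\in\Sigma_X$. Since $P$ is not a cone, $P\not\subseteq\bar c$, so we may choose $v_i\in P\setminus\bar c$. Apply \cref{prop: contractible} with $\tau=\bar c\setminus(P\cup\sigma(P))$. This gives that $\langle P\setminus\{v_i\},\sigma(P),\bar c\setminus P\rangle\supseteq\bar c$ is a cone of $X$. That cone is destroyed in $\tilde X$. Hence the replacement test cone in $\tilde X$, namely $\tau'=\langle(\bar c\setminus(P\cup\sigma(P)))\setminus\{c_k\},b\rangle$ for a suitable $c_k$, violates the criterion, so $r_{\tilde X}(P)$ is not contractible.

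\emph{Sufficiency.} If $\langle\sigma_X(P),\bar c\rangle\notin\Sigma_X$, no cone through $\sigma(P)$ contains $\bar c$. The argument of Step 2 then goes through verbatim, except that $b$ never lies in a test cone compatible with $\sigma(P)$.

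\emph{Descent of contractibility.} For $\tilde X\Rightarrow X$, contractibility descends by the same translation. The case $\langle\sigma_X(P),\bar c\rangle\in\Sigma_X$ is excluded by the argument above.

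\textbf{Main obstacle.} I expect the main difficulty to be the careful bookkeeping of test cones $\tau$ containing $b$ (equivalently, cones of $X$ containing $\bar c$), especially when the focus itself changes. This is where I would work most carefully, checking each of the two bullet types in \cref{proposition: fan of a blowup}.
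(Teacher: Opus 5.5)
The paper gives no proof of this statement; it quotes it from Casagrande. Your route is the natural direct verification: Sato's \cref{prop:PC_blowup} for primitivity, \cref{proposition: fan of a blowup} as a dictionary between cones of $X$ and of $\tilde X$, and the criterion of \cref{prop: contractible}. Primitivity, part (i) in the unchanged-focus case, and sufficiency in (ii) go through essentially as you sketch. The gap is in the one step carrying the content of (ii): the necessity of $\langle\sigma_X(P),\bar c\rangle\notin\Sigma_X$.

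Put $\sigma=\sigma_X(P)$ and $\tau_0=\bar c\setminus(P\cup G(\sigma))$, and take $v_i\in P\setminus\bar c$ as you do. Let $c_k\in\tau_0$ and $\tau'=\langle\tau_0\setminus\{c_k\},b\rangle$. The cone the criterion demands at index $i$ is $\langle P\setminus\{v_i\},\sigma,\tau'\rangle$. Its generators other than $b$ miss $c_k$, so they do not contain $\bar c$. Together with $\bar c$ they span exactly the cone $\langle P\setminus\{v_i\},\sigma,\bar c\rangle\in\Sigma_X$ that you just obtained from contractibility of $r_X(P)$. By \cref{proposition: fan of a blowup}, the demanded cone therefore \emph{is} in $\Sigma_{\tilde X}$, for every choice of $c_k\in\tau_0$. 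So ``that cone is destroyed in $\tilde X$'' gives no violation; your own cone is precisely what makes the criterion hold at $v_i$ for this $\tau'$.

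The missing observation is that $P\cap\bar c\neq\varnothing$ and $P\cap G(\sigma)=\varnothing$ (\cref{remark: PC does not intersect focus}). Hence $G(\sigma)\cup\tau_0$ misses $\bar c\cap P$, and when $\langle\sigma,\bar c\rangle\in\Sigma_X$ the cone $\langle\sigma,\tau_0,b\rangle$ already lies in $\Sigma_{\tilde X}$. No $c_k$ needs to be removed. With $\tau'=\langle\tau_0,b\rangle$, the generators of $\langle P\setminus\{v_i\},\sigma,\tau'\rangle$ other than $b$ contain $\bar c$, so this cone is not in $\Sigma_{\tilde X}$ and the criterion fails. More simply, test $\tau'=\langle b\rangle$ at an index $v_j\in P\cap\bar c$: a cone $\langle P\setminus\{v_j\},\sigma,b\rangle\in\Sigma_{\tilde X}$ would give a cone $\langle P\setminus\{v_j\},\sigma,\bar c\rangle$ of $X$ whose generators contain $P$, which is impossible. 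Either version shows that $\langle\sigma,\bar c\rangle\in\Sigma_X$ alone rules out contractibility in $\tilde X$, without using contractibility in $X$.

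The same observation fixes your remark that in (ii) ``the focus is not needed''. Both this step and the descent $\tilde X\Rightarrow X$ apply the criterion with $\sigma_{\tilde X}(P)$. You need $\sigma_{\tilde X}(P)=\sigma_X(P)$, which holds because $\bar c\not\subseteq G(\sigma_X(P))$ is automatic in (ii). The changed-focus case therefore arises only in (i). There the translation is not literally ``replace $\bar c$ by $b$'': test cones in $\tilde X$ may contain some, but not all, of the $c$'s with minimal coefficient. These must be dropped to obtain a test cone in $X$.
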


\begin{corollary}
\label{lem:contractible after blowdown}
    Let $X$ be a proper toric manifold, $\langle\bar c\rangle\in\Sigma_X$ a cone, and
    $\tilde X \to X$ the blowup of $V_X(\bar c)$.
    Let \(P\subseteq \Sigma_X\) be a primitive collection of both \(X\) and \(\tilde X\) such that the relation \(r_{\tilde X} (P)\) is contractible in \(\tilde X\).
    Then \(r_X(P)\) is contractible in \(X\).
\end{corollary}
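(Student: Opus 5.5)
The corollary follows from \cref{proposition: conractibility on the blowup} once we determine how $P$ sits relative to the center $\bar c$. There are three possibilities: $P\cap \bar c=\varnothing$, or $P$ meets $\bar c$ without containing it, or $\bar c\subseteq P$. The plan is to rule out the third case and then read off contractibility on $X$ from the two cases covered by that proposition.

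First we show $\bar c\not\subseteq P$. By \cref{proposition: fan of a blowup}, $G(\Sigma_{\tilde X})=G(\Sigma_X)\sqcup\{b\}$ with $b=\sum\bar c$, and $\langle\bar c\rangle$ is not a cone of $\Sigma_{\tilde X}$, while every proper subset of $\bar c$ still spans a cone there. Thus $\bar c$ is itself a primitive collection of $\tilde X$. Suppose $\bar c\subseteq P$. Since $P\in\PC(\tilde X)$, every proper subset of $P$ spans a cone of $\tilde X$, so $\bar c$ must equal $P$. But then $P=\bar c$ spans a cone of $\Sigma_X$, since $\langle\bar c\rangle\in\Sigma_X$, which contradicts $P\in\PC(X)$. (If $|\bar c|=1$ the blowup is an isomorphism and the statement is trivial.)

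It remains to treat the two allowed cases. If $P\cap\bar c=\varnothing$, then \cref{proposition: conractibility on the blowup}\ref{item: contractibility type a} says that $r_{\tilde X}(P)$ is contractible if and only if $r_X(P)$ is, so contractibility on $\tilde X$ gives it on $X$. If $P\cap\bar c\neq\varnothing$ and $\bar c\not\subseteq P$, then part \ref{item: contractibility type c} says that contractibility of $r_{\tilde X}(P)$ is equivalent to contractibility of $r_X(P)$ together with an additional condition; in particular the forward implication gives $r_X(P)$ contractible. This proves the corollary.

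The step most likely to cause trouble is the exclusion of $\bar c\subseteq P$. It rests on the observation that $\bar c$ is a primitive collection of $\tilde X$, and this requires reading the cone description in \cref{proposition: fan of a blowup} carefully, in particular that all proper subsets of $\bar c$ remain cones after the blowup. Everything else follows directly from the cited proposition.
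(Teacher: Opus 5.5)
Your proof is correct and matches the paper's: you rule out $\bar c\subseteq P$ and then read off contractibility on $X$ from the two cases of \cref{proposition: conractibility on the blowup}. The paper leaves the exclusion step to the reader, and your argument for it is right; in fact you only need $\langle\bar c\rangle\notin\Sigma_{\tilde X}$, not that $\bar c$ is a primitive collection of $\tilde X$.
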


\begin{proof}
    One can notice that our assumptions eliminate the case \(\bar c \subseteq P\), hence the claim is a special case of \cref{proposition: conractibility on the blowup}.
\end{proof}

\begin{lemma} \label{lem:cond contractibility after flip}
    Let $X$ be a proper toric manifold. Let $Q$ be a primitive collection with a contractible primitive relation of the form
    \[
        r_X(Q)\colon \, a_1+\dots+a_m=c_1+\dots+c_\ell,
    \]
    with $m,\ell \geq 2$.
    Denote by $X \dashrightarrow Y$ the flip corresponding to $r_X(Q)$ as in \cref{setup_flips}.
    Let $r_X(P)$ be a contractible primitive relation in $X$
    such that $P$ is also a primitive collection in $Y$ and $r_X(P)=r_Y(P)$.
    Then $r_Y(P)$ is contractible in $Y$ if:
    \begin{enumerate}
        \item \label{item: contractibility after flip: type a}
        either $P \cap \bar c = \varnothing$, or
        \item \label{item: contractibility after flip: type c}
         $P \cap \bar c \neq \varnothing$ and $\langle \sigma(P), \bar c \rangle \notin \Sigma_X$.
    \end{enumerate}
\end{lemma}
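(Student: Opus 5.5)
The plan is to factor the flip as the blowup $\tilde X \to X$ of $V_X(\bar c)$ followed by the contraction $\tilde X \to Y$ given by $\sum \bar a = z$. We transport contractibility of $r_X(P)$ first up to $\tilde X$ using \cref{proposition: conractibility on the blowup}, and then down to $Y$ using \cref{lem:contractible after blowdown}. The point is that $\tilde X$ is also the blowup of $Y$ along $V_Y(\bar a)$. This holds because, by \cref{prop:PC_blowdown}, contracting $\sum\bar a = z$ exhibits $\tilde X$ as the blowup of $Y$ along $V_Y(\bar a)$, with $\langle \bar a\rangle \in \Sigma_Y$. So \cref{lem:contractible after blowdown} applies with the roles of $X,\bar c$ played by $Y,\bar a$.

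\textbf{Step 1: lift to $\tilde X$.} In case (1), $P\cap\bar c=\varnothing$, so \cref{proposition: conractibility on the blowup}\ref{item: contractibility type a} gives that $P\in \PC(\tilde X)$ and $r_{\tilde X}(P)$ is contractible. In case (2), we first observe that $\bar c\not\subseteq P$. Otherwise $\bar c$, a cone of $X$, would be a proper subset of the primitive collection $P$, which is allowed. We must exclude it separately, though, since then $\sigma(P)\cap P=\varnothing$ would not obviously contradict anything. Here is how. If $\bar c\subseteq P$, then $P\notin\PC(Y)$ most likely, because on $\tilde X$ the collection $\bar c$ is primitive, and passing to $Y$ tends to destroy $P$. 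I would try to derive a contradiction from the hypothesis that $P$ is primitive in $Y$ with $r_X(P)=r_Y(P)$, using \cref{prop:PC_blowup} and \cref{prop:PC_blowdown}. If that fails, I would note that the condition $\langle\sigma(P),\bar c\rangle\notin\Sigma_X$ is meant to be used only together with \cref{proposition: conractibility on the blowup}\ref{item: contractibility type c}, which presupposes $\bar c \not\subseteq P$, and treat that as implicit. Granting $\bar c\not\subseteq P$, part \ref{item: contractibility type c} gives that $r_{\tilde X}(P)$ is contractible.

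\textbf{Step 2: descend to $Y$.} Now $P$ is a primitive collection of both $\tilde X$ and $Y$ (the latter by hypothesis), and $r_{\tilde X}(P)$ is contractible. Applying \cref{lem:contractible after blowdown} to the blowup $\tilde X\to Y$ of $V_Y(\bar a)$ gives that $r_Y(P)$ is contractible in $Y$. One should also check that the relation is unchanged, $r_{\tilde X}(P)=r_X(P)=r_Y(P)$. This is automatic once $P$ is primitive in $\tilde X$ and $Y$, since the relation is determined by the sum $\sum P$ and its minimal cone. The hypothesis $r_X(P)=r_Y(P)$ ensures the focus is not altered along the way.

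The main obstacle is the bookkeeping in Step 1, case (2): justifying $\bar c\not\subseteq P$, or arguing that this subcase is vacuous. The rest is a direct chaining of the cited results.
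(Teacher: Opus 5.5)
Your architecture is the paper's: lift contractibility to $\tilde X$ with \cref{proposition: conractibility on the blowup}, then descend to $Y$ with \cref{lem:contractible after blowdown}, viewing $\tilde X$ as the blowup of $Y$ along $V_Y(\bar a)$. Case~(1) and Step~2 are fine. The gap is in case~(2): you never prove $\bar c\not\subseteq P$, and falling back on ``treating it as implicit'' amounts to adding a hypothesis. This is more than bookkeeping. If $\bar c\subseteq P$, then $P$ properly contains the primitive collection $\bar c$ of $\tilde X$, so by \cref{prop:PC_blowup} $P$ is not a primitive collection of $\tilde X$. Then neither part of \cref{proposition: conractibility on the blowup} applies, and Step~2 has nothing to descend.

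The missing observation is that the case-(2) hypothesis, together with contractibility of $r_X(P)$, already rules this out. Since $\langle\bar c\rangle\in\Sigma_X$ and $\langle P\rangle\notin\Sigma_X$, we have $\bar c\neq P$, so you can pick $v\in P\setminus\bar c$. Applying \cref{prop: contractible} to $r_X(P)$ with $\tau=\{0\}$ gives $\langle P\setminus\{v\},\sigma(P)\rangle\in\Sigma_X$. This cone has $\langle\sigma(P),\bar c\rangle$ as a face, contradicting $\langle\sigma(P),\bar c\rangle\notin\Sigma_X$.

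Your own tentative route also closes, without using the case-(2) hypothesis. By \cref{proposition: fan of a blowup}, $\langle\bar c\rangle\notin\Sigma_{\tilde X}$, while every cone of $Y$ not containing $\bar a$ is a cone of $\tilde X$. Since $\bar a\cap\bar c=\varnothing$, this gives $\langle\bar c\rangle\notin\Sigma_Y$. Hence no primitive collection of $Y$ can properly contain $\bar c$, and $\bar c\subsetneq P\in\PC(Y)$ is impossible.

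A smaller point: $r_{\tilde X}(P)=r_X(P)$ is not ``automatic'' from primitivity, since the focus depends on the fan. The correct reason is that $\sigma(P)$ is a cone of both $X$ and $Y$, so it contains neither $\bar c$ nor $\bar a$, and is therefore a cone of $\tilde X$. The chain of implications you use does not actually need this equality.
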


\begin{proof}
    We first observe that in Case \ref{item: contractibility after flip: type c}, we cannot have that $\bar c \subseteq P$, otherwise $\langle \sigma(P) , \bar c \rangle$ is a cone by \cref{prop: contractible}.
    Now $\bar c \not\subseteq P$ in both cases, hence $P$ is a primitive collection on $\tilde X$ by \cref{prop:PC_blowup}.
    
    Since $r_X(P)=r_Y(P)$, we denote by $r(P)$ the primitive relation of $P$ on $X$ or $Y$, and by $\sigma(P)$ the focus of $P$ on either $X$ or $Y$.
    In particular, $\sigma(P)$ is a cone on both $X$ and $Y$, and so $\bar a\not\subseteq G(\sigma(P))$ and $\bar c \not\subseteq G(\sigma(P))$, which imply that $\sigma(P)$ is a cone in $\Sigma_{\tilde X}$ by \cref{proposition: fan of a blowup}.
    This in turn implies that $r_{\tilde X} (P) = r(P)$.
    
    We now observe that $r(P)$ is contractible on $\tilde X$ by \cref{proposition: conractibility on the blowup}.
    Indeed, in Case \ref{item: contractibility after flip: type a}, we are in the assumptions of Part \ref{item: contractibility type a}, and in Case \ref{item: contractibility after flip: type c}, we checked that $\bar c \not\subseteq P$, and so the assumptions of Part \ref{item: contractibility type c} are verified.
    It now follows from \cref{lem:contractible after blowdown} that $r(P)$ is contractible on $Y$.
\end{proof}

% -
\subsection{The minimal \texorpdfstring{$\PP$}{P}-dimension}
\label{subsection: minimal P-dimension}
% -

By \cite[Proposition 3.2]{Bat91}, a projective toric manifold always admits a centered primitive collection
(see also \cite[Corollary 3.3]{CFH} for a geometric proof).
This is not true in general for a proper toric manifold \cite[Example 3.4]{CFH}.
For a projective toric manifold $X$, we defined in \cite[Definition 2.10]{team2023} the \emph{minimal $\PP$-dimension} $\minRC(X)$ of $X$ as the minimal integer $m$ such that $X$ has a centered primitive relation of order $m+1$:
\[
    x_0 + x_1 + \dots + x_m = 0.
\]
One has $m(X)\in\{1,\dots,\dim(X)\}$, and $m(X)=\dim(X)$ if and only if $X$ is the projective space.

By \cite[Corollary 2.6.]{CFH}, a proper toric manifold $X$ has a centered primitive relation of order $m+1$ if and only if it admits a $\PP^m$-bundle structure on a dense open subset. We recall the explicit description of the biggest such open subset from \cite{team2023}.

\begin{notation} \label{not:no proj bundle}
Let $X$ be a proper toric manifold, and $P=\{x_0,\dots, x_k\}\in \PC(X)$ a centered primitive collection. Denote by $\mathcal{E}_P$ the set of cones $\sigma = \langle v_1, \dots, v_r \rangle \in \Sigma_X$ such that $P \cap G(\sigma) = \varnothing$,
and $\{v_1, \dots, v_r, x_{j_1}, \dots, x_{j_s} \} \in \PC(X)$ for some $s$, \(1\leq s\leq k\), i.e.,
\[ \cE_P \coloneqq \left\{ \sigma \in \Sigma_X
\mid P \cap G(\sigma) = \varnothing \text{ and }
\exists P' \subsetneq P \text{ such that } P' \cup G(\sigma) \in \PC(X)
\right\}.
\]
Set
\[  V(\mathcal{E}_P) \coloneqq \bigcup_{\sigma \in \mathcal{E}_P} V(\sigma) \subset X. \]
\end{notation}

\begin{proposition}[{\cite[Proposition 2.14]{team2023}}]
\label{proposition: Pk bundle}
Let $X$ be a proper toric manifold admitting a centered primitive collection $P=\{x_0,\dots, x_m\}\in \PC(X)$, and let 
$V(\mathcal{E}_P)$ be as in Notation~\ref{not:no proj bundle}.
Then the open subset $U= X \setminus V(\mathcal{E}_P)$ admits a $\PP^m$-bundle structure over a toric manifold.
\end{proposition}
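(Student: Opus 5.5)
We need to prove Proposition (team2023 Prop 2.14): \(U = X\setminus V(\mathcal{E}_P)\) admits a \(\PP^m\)-bundle structure over a toric manifold.

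The plan is to realize \(U\) as the toric variety of an explicit subfan of \(\Sigma_X\), and to show that this subfan splits as a "product-type" fan whose projection along the span of \(P\) gives a smooth fan \(\Sigma'\) over which \(U\) is a \(\PP^m\)-bundle. Since \(V(\mathcal{E}_P)\) is a union of orbit closures, \(U\) is torus-invariant and open. Its fan \(\Sigma_U\) consists of the cones \(\tau\in\Sigma_X\) such that \(V(\tau)\not\subseteq V(\mathcal{E}_P)\), i.e. no face of \(\tau\) lies in \(\mathcal{E}_P\). Let \(N_P=\mathbb{R}\langle x_0,\dots,x_m\rangle\); it has dimension \(m\), since \(\sum x_i=0\) and every proper subset spans a smooth cone. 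Consider \(\pi\colon N\to N/(N\cap N_P)\). Unimodularity of the cones \(\langle x_0,\dots,\check{x_i},\dots,x_m\rangle\) shows that \(N\cap N_P\) is saturated of rank \(m\), and that the \(x_i\) form the fan of \(\PP^m\) in it.

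\textbf{Structural claim.} For every cone \(\sigma\in\Sigma_U\) with \(G(\sigma)\cap P=\varnothing\) and every \(i\), the cone \(\langle G(\sigma), x_0,\dots,\check{x_i},\dots,x_m\rangle\) lies in \(\Sigma_X\), and in fact in \(\Sigma_U\). Conversely, every cone of \(\Sigma_U\) has the form \(\langle G(\sigma), P'\rangle\) with \(P'\subsetneq P\) and \(G(\sigma)\cap P=\varnothing\). The converse is immediate, because \(P\) itself is not a cone. For the forward direction, suppose \(G(\sigma)\cup P'\) is not a cone for some \(P'\subsetneq P\). Then it contains a primitive collection \(R\). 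Since \(G(\sigma)\) and \(P'\) are both cones, \(R\) meets both. Set \(\sigma_0=\langle R\setminus P\rangle\); it is a face of \(\sigma\), and \(R\cap P\subsetneq P\) is nonempty, so \(\sigma_0\in\mathcal{E}_P\). This contradicts \(\sigma\in\Sigma_U\). The same argument, applied to faces, shows the enlarged cones stay in \(\Sigma_U\): any face of \(\langle G(\sigma),P'\rangle\) disjoint from \(P\) is a face of \(\sigma\), hence not in \(\mathcal{E}_P\). The edge case \(s\le k\) in the definition of \(\mathcal{E}_P\) is exactly \(R\cap P\neq P\), which is automatic here.

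\textbf{Base fan and bundle structure.} Set \(\Sigma'=\{\pi(\sigma) : \sigma\in\Sigma_U,\ G(\sigma)\cap P=\varnothing\}\). Then:
\begin{enumerate}
\item Each such \(\pi|_\sigma\) is injective and unimodular, since \(G(\sigma)\cup\{x_1,\dots,x_m\}\) is part of a basis.
\item These images form a fan. If \(\pi(\sigma)\) and \(\pi(\sigma')\) overlapped badly, lifting inside the cones \(\langle\sigma,P'\rangle\) and \(\langle\sigma',P'\rangle\) for a suitable \(P'\) would contradict the fact that \(\Sigma_X\) is a fan.
\end{enumerate}
The structural claim then says \(\Sigma_U\) is exactly the set of cones \(\langle G(\sigma),P'\rangle\). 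Choosing a splitting \(N\cong (N\cap N_P)\oplus N'\) via lifts \(\sigma\mapsto G(\sigma)\), this is the standard description of the fan of a toric \(\PP^m\)-bundle \(\PP(\mathcal{O}\oplus L_1\oplus\dots\oplus L_m)\) over \(X_{\Sigma'}\), a decomposable bundle, cf. \cite[§7.3]{CoxLittleSchenck2011}. Hence \(\pi\) induces a smooth toric morphism \(U\to X_{\Sigma'}\) whose fibres are the \(\PP^m\)'s of \(\{x_i\}\), and \(U\) is a \(\PP^m\)-bundle over the toric manifold \(X_{\Sigma'}\).

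\textbf{Expected obstacle.} The main difficulty is the fan property of \(\Sigma'\) and the compatibility of lifts. Two different cones \(\sigma,\sigma'\) avoiding \(P\) could project to overlapping cones, or a single vector of \(G(\Sigma_U)\setminus P\) could differ from another by an element of \(N_P\). To exclude this, I would show that for \(v\in G(\sigma)\) the vector \(v\) is determined by \(\pi(v)\) up to the chosen splitting. The argument uses the fact that \(\langle \sigma, P\setminus\{x_i\}\rangle\) is a cone for all \(i\), so the cones over \(\sigma\) cover \(\pi^{-1}(\operatorname{relint}\pi(\sigma))\). It then uses the fan axioms in \(\Sigma_X\) to conclude that the preimages of \(\pi(\sigma)\) and \(\pi(\sigma')\) meet exactly in \(\pi^{-1}\) of a common face. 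This covering argument is the step I would write out most carefully.
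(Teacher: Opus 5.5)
Your argument is correct. Within this paper the statement is not proved but quoted from \cite[Proposition 2.14]{team2023}, where $X\setminus V(\cE_P)$ is identified as the biggest open subset carrying the $\PP^m$-bundle structure coming from \cite[Corollary 2.6]{CFH}. Your fan-theoretic proof is therefore a self-contained alternative to that citation.

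Your structural claim is proved correctly via a minimal non-face $R\subseteq G(\sigma)\cup P'$. The claim is that $\Sigma_U$ consists exactly of the cones $\langle G(\sigma),P'\rangle$ with $\sigma\in\Sigma_U$, $G(\sigma)\cap P=\varnothing$ and $P'\subsetneq P$. The projection along $N_P$ then exhibits $\Sigma_U$ as a split fan.

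The citation buys brevity, and in \cite{team2023} also the maximality of $U$, which is not part of the statement here. Your route buys explicitness. The base $W=X_{\Sigma'}$ has rays in bijection with $G(\Sigma_U)\setminus P$, and $U\cong\PP_W(\cL_0\oplus\dots\oplus\cL_m)$ for a \emph{split} bundle. This is exactly what the paper uses later in the proof of \cref{proposition: existence of surface with nonpositive intersection}.

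The step you flag as the main obstacle goes through quickly. Since $G(\sigma)$ and $G(\sigma')$ are disjoint from $P$ and $\Sigma_X$ is a simplicial fan, we have
\[
\langle G(\sigma),P\setminus\{x_i\}\rangle\cap\langle G(\sigma'),P\setminus\{x_j\}\rangle=\langle G(\sigma)\cap G(\sigma'),\,P\setminus\{x_i,x_j\}\rangle .
\]
Also $\bigcup_i\langle G(\sigma),P\setminus\{x_i\}\rangle=\sigma+N_P=\pi^{-1}(\pi(\sigma))$. Taking the union over all $i,j$ gives $\pi^{-1}(\pi(\sigma))\cap\pi^{-1}(\pi(\sigma'))=(\sigma\cap\sigma')+N_P$, that is, $\pi(\sigma)\cap\pi(\sigma')=\pi(\sigma\cap\sigma')$. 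This is a common face, because $\pi$ is injective on the span of $\sigma$. So the fan axiom holds for $\Sigma'$. The same identity shows that $\sigma\mapsto\pi(\sigma)$ is injective, so every ray of $\Sigma'$ has a unique lift in $G(\Sigma_U)\setminus P$.

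Two minor fixes:
\begin{itemize}
\item $N\cap N_P$ is automatically saturated. What unimodularity gives you is that $x_1,\dots,x_m$ is a $\ZZ$-basis of it.
\item The lifts do not themselves define a linear splitting. Fix any complement $N'$ of $N\cap N_P$; the $(N\cap N_P)$-components of the lifts of the ray generators of $\Sigma'$ are the integers defining the $\cL_i$.
\end{itemize}
If you only need local triviality, you can instead apply the split-fan criterion of \cite[\S3.3]{CoxLittleSchenck2011} directly. Use the subfan $\{\sigma\in\Sigma_U : G(\sigma)\cap P=\varnothing\}$ together with the fan of $\PP^m$ in $N_P$.
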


The following table was obtained in \cite{team2023} and lists the number of toric Fano manifolds of dimension $4$, $5$ and $6$ according to their minimal $\mathbb{P}$-dimension.

\begin{table}[h]
\centering
\begin{tabular}{|l||l|l|l|l|l|l|l|}
\hline
\small{$\dim(X)$} & \small{\# Fano's}  & \small{\#($m$=1)} & \small{\#($m$=2)} & \small{\#($m$=3)} & \small{\#($m$=4)} & \small{\#($m$=5)} & \small{\#($m$=6)} \\ \hline \hline
4   & 124  & 107    & 15     & 1      & 1      &       &        \\ \hline
5   & 866  & 744    & 112    & 8      & 1      & 1     &        \\ \hline
6   & 7622 & 6333   & 1174   & 105    & 8      & 1     & 1      \\ \hline
\end{tabular}
\end{table}

\noindent In \cite{team2023}, we classified $n$-dimensional toric Fano manifolds $X$ with $m(X)\ge n-2$.
As suggested by the table above, for $n\geq 5$, there is a unique $n$-dimensional toric Fano manifold with $m(X)= n-1$, namely, the blowup of $\PP^n$ along a linear $\PP^{n-2}$, and there are exactly $8$ isomorphism classes of $n$-dimensional toric Fano manifolds with $m(X)= n-2$, explicilty described in \cite[Theorem 1.4]{team2023}.

% ---
\section{The general strategy}
\label{section:strategy}
% ---

In this section, we describe the general strategy proposed in \cite{team2023} to approach \cref{conjecture}. 
Let $X$ be a toric Fano manifold with minimal $\PP$-dimension $m\geq 1$.
Recall that $m=\dim(X)$ if and only if $X$ is the projective space, and in this case $X$ is $2$-Fano if $\dim(X)\geq 2$.
So from now one we assume that $m<\dim(X)$ and 
let $P = \{ x_0 , x_1, \dots , x_m \}$ be a centered primitive collection of $X$. 
We denote by $U=X\setminus V(\mathcal{E}_P)$ the open subset from \cref{proposition: Pk bundle}, 
which admits a $\PP^m$-bundle structure associated to $P$. 
If $V(\mathcal{E}_P)$ happens to be of codimension at least $2$, then 
it is possible to find a proper surface $S\subset U$ such that $\ch_2(X)\cdot S\leq 0$ (see \cref{proposition: existence of surface with nonpositive intersection}), showing that $X$ is not $2$-Fano. 

When $\codim V(\mathcal{E}_P) = 1$, the situation becomes more complicated. 
In this scenario, our aim is to perform a sequence of birational transformations, each removing a divisorial component from $V(\mathcal{E}_P)$, which results in reducing the codimension of $V(\mathcal{E}_P)$.
To make this precise, we introduce the following definition.
\smallskip

\begin{definition}
    Let $X$ be a proper toric manifold with a centered primitive collection $P = \{ x_0 , x_1, \dots , x_m \}$.
    We say that a primitive collection $Q \in \PC(X)$ is \emph{relevant} if there exist $P' \subsetneq P$ and $a\in G(\Sigma_X) \setminus P$ such that $Q = P' \cup \{a\}$.
    The corresponding primitive relation $r(Q)$ will be called a \emph{relevant primitive relation}.
    We will denote by $\RPC(X)$ the set of relevant primitive collections of $X$, and by $\RPR(X)$ the set of relevant primitive relations.
\end{definition}

\begin{remark}
    The notion of being relevant depends on the chosen centered primitive collection.
    However, we suppress this choice from notation because we always work with one fixed centered primitive relation.
\end{remark}

The following statement follows immediately from the definitions.

\begin{lemma}
    Let $X$ be a proper toric manifold with a centered primitive collection $P = \{ x_0 , x_1, \dots , x_m \}$.
    Then there is a surjection
    \[
\begin{array}{ccc}
   \RPC(X) & \to & \big\{ \text{divisorial components of } V(\cE_P) \big\} \\
 P' \cup \{a\} & \mapsto & V(a). \hfill
\end{array}
    \]
    In particular, $\codim V(\cE_P) \geq 2$ if and only if $\RPC(X) = \varnothing$. \hfill \qed
\end{lemma}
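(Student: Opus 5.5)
The plan is to unwind \cref{not:no proj bundle} and match the cones in $\cE_P$ with the irreducible components of $V(\cE_P)$. Since $\codim V(\sigma)=\dim\sigma$, the divisorial components of $V(\cE_P)$ are exactly the divisors $V(a)$ with $\langle a\rangle$ a ray belonging to $\cE_P$. Every other component $V(\sigma)$ has codimension at least $2$. So it suffices to show that the map $P'\cup\{a\}\mapsto V(a)$ is well defined and that its image is precisely the set of these divisors $V(a)$.

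The first step is well-definedness. Let $Q=P'\cup\{a\}$ be relevant, with $P'\subsetneq P$ and $a\in G(\Sigma_X)\setminus P$. Then $\sigma=\langle a\rangle$ is a cone of $\Sigma_X$ with $G(\sigma)\cap P=\varnothing$, and $P'\cup G(\sigma)=Q\in\PC(X)$. One point needs checking: the definition of $\cE_P$ requires $1\le |P'|\le m$. We have $|P'|\le m$ because $P'\subsetneq P$. If $P'$ were empty, then $Q=\{a\}$ would be a primitive collection. But a singleton $\{a\}$ cannot be primitive, since $\langle a\rangle\in\Sigma_X$. Hence $\langle a\rangle\in\cE_P$, and $V(a)$ is a divisorial component of $V(\cE_P)$.

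The second step is surjectivity. Take a divisorial component of $V(\cE_P)$. By the first paragraph it equals $V(\sigma)$ for some one-dimensional cone $\sigma=\langle a\rangle\in\cE_P$. By definition of $\cE_P$, we have $a\notin P$, and there is some $P'\subsetneq P$ with $P'\cup\{a\}\in\PC(X)$. This collection is relevant and maps to $V(a)$.

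The ``in particular'' statement then follows. We have $\codim V(\cE_P)\ge 2$ exactly when $V(\cE_P)$ has no divisorial component. By surjectivity this happens exactly when $\RPC(X)=\varnothing$, because the map is well defined and any relevant collection would produce a divisorial component. Here we use the convention that $V(\cE_P)=\varnothing$ has codimension $\infty$.

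I do not expect a real obstacle. The only points needing care are the minor bookkeeping ones: ruling out $P'=\varnothing$, and noting that components of codimension at least $2$ are not divisorial.
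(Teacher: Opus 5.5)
Your proof is correct and follows the paper's intended route: the paper gives no written argument, only remarking that the statement follows immediately from the definitions, and your proof is that unwinding of the definition of $\cE_P$. The bookkeeping you supply is all that is needed: divisorial components of $V(\cE_P)$ are exactly the $V(a)$ with $\langle a\rangle\in\cE_P$, and $P'\neq\varnothing$ because singletons are never primitive. One further point you use only implicitly is that $a$ is recovered from the collection as the unique element of $(P'\cup\{a\})\setminus P$, so the map is well defined.
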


We now sketch our general strategy to prove that $X$ is not $2$-Fano:
\begin{itemize}
    \item[] {\bf Step 1.} We perform blowdowns and flips $f \colon X \dashrightarrow Y$ to arrive at a proper toric manifold $Y$ such that $P$ is still a primitive collection of $Y$ and $\RPC(Y) = \varnothing$.

    \item[] {\bf Step 2.} We construct a surface $S \subset Y \setminus V_Y(\cE_P)$ such that
    $\ch_2(Y)\cdot S \leq 0$.

    \item[] {\bf Step 3.} We show that for a suitably chosen surface \(S\) in Step 2, its proper transform $\tilde S \subset X$ witnesses that $X$ is not $2$-Fano.
    More precisely: $\ch_2(X)\cdot \tilde S \leq \ch_2(Y)\cdot S \leq 0$.  
\end{itemize}

In \cite{team2023}, we established that this strategy works for $m=1$.
In the present paper, we show that Step 2 holds in general (see \cref{proposition: existence of surface with nonpositive intersection}), and prove that Steps 1--3 can be carried out when $m=2$ (see \cref{theorem: construct blowdowns and flips} for Step 1 and \cref{section: Chern character computations} for Step 3).

% -
\subsection{Detailed strategy in the case \texorpdfstring{$m(X)=2$}{}}
\label{subsection: general strategy}
% -

Let $X$ be a toric Fano manifold.
The most technically demanding part of the above outline is to get to the manifold $Y$ which is birational to $X$ and admits a $\PP^m$-bundle structure on a big open subset. We recall that in the case $m(X)=1$, $Y$ is obtained by performing at most two smooth blowdowns.
We now describe how to proceed when $m(X)=2$.
In this case, in addition to (at most three) smooth blowdowns, it may also be necessary to perform flips. 
Moreover, unlike the case $m(X)=1$, one may have to allow non-projective toric manifolds. 

Our first task is to understand the relevant primitive collections and relations that can occur in $X$. 
Fix a centered primitive relation 
$$x_0+x_1+x_2=0.$$
Since $X$ is Fano, the degree 
of any primitive relation is positive, and so,
a priori, we have the following four possibilities for relevant primitive collections and relations.
\begin{enumerate}
\item
\label{item: RPC x+a=b}
The primitive collection $\{x_i, a\}$, with primitive relation $x_i+a = b$;
\item 
\label{item: RPC x+x+a=b+c}
The primitive collection $\{x_i,x_j, a\}$, with primitive relation $x_i+x_j+a = b+c$, where $b$ and $c$ are distinct;
\item 
\label{item: RPC x+x+a=b}
The primitive collection $\{x_i,x_j, a\}$, with primitive relation $x_i+x_j+a = b$;
\item 
\label{item: RPC x+x+a=2b}
The primitive collection $\{x_i,x_j, a\}$, with primitive relation $x_i+x_j+a = 2b$.
\end{enumerate}

\begin{remark}
    We will show in \cref{lemma:fewer_possibilities} below that it is enough to consider cases \ref{item: RPC x+a=b} and \ref{item: RPC x+x+a=b+c} only.
    Furthermore, the vectors \(a,b,c\) appearing in the relevant primitive relations of types \ref{item: RPC x+a=b} and \ref{item: RPC x+x+a=b+c} above are distinct from \(x_0,x_1,x_2\) by \cref{lemma: RHS of an RPR is not contained in the centered PC}.
\end{remark}

\begin{remark}
\label{remark: when a may repeat}
    We note that we cannot have a type \ref{item: RPC x+a=b} relation \(x_i + a = b\) together with a type \ref{item: RPC x+x+a=b+c} relation \(x_i+x_j+a=c+d\), with repeating \(x_i\) and \(a\), because the former implies that \(\{x_i,a\}\) does not span a cone in \(\Sigma_X\), while the second implies that it does.
    However, we may have two relations of type \ref{item: RPC x+x+a=b+c} of the forms \(x_i+x_j+a=b+c\) and \(x_i+x_k+a=d+e\), with repeating \(x_i\) and \(a\). This appears, for instance, in Macaulay ID (5, 627).
\end{remark}

In what follows, we will need the following notion and result from \cite{team2023}.

\begin{definition}
Let $X$ be a proper toric manifold. 
A vector $v \in G(\Sigma_X)$ is an \emph{opponent} of $w \in G(\Sigma_X)$ if $\{v,w\}$ does not span a cone in \(\Sigma_X\).
\end{definition}

\begin{proposition}[{\cite[Corollary 2.23]{team2023}}]\label{prop:one_oponent}
    If $X$ is toric Fano manifold with $m(X) \geq 2$, then each vector $v\in G(\Sigma_X)$ admits at most one opponent.
\end{proposition}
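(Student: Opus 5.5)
The plan is to argue by contradiction. Suppose some $v\in G(\Sigma_X)$ has two distinct opponents $w_1,w_2$. Since every single ray spans a cone, $\{v,w_1\}$ and $\{v,w_2\}$ are primitive collections of order $2$.

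\textbf{Step 1: pin down the two relations.} Since $X$ is Fano, $\deg\{v,w_i\}=2-\sum_j\mu_j>0$, so the focus carries total coefficient at most $1$. If the focus were $\{0\}$, then $v+w_i=0$ would be a centered primitive relation of order $2$, giving $m(X)=1$ and contradicting $m(X)\ge 2$. Hence
\[
v+w_1=u_1,\qquad v+w_2=u_2,
\]
with $u_1,u_2\in G(\Sigma_X)$. By \cref{remark: PC does not intersect focus}, $u_i\notin\{v,w_i\}$. Both relations have degree $1$, so they are extremal and hence contractible (\cite[Corollary 4.4]{Casagrande2003}). By \cref{prop:PC_blowdown}, each contraction is the blowdown of the divisor $V(u_i)$ onto $V(v,w_i)$.

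\textbf{Step 2: use contractibility of one relation against the other.} Apply \cref{prop: contractible} to $r(\{v,w_1\})$ with test cones $\tau$ such that $\langle u_1,\tau\rangle\in\Sigma_X$ and $\tau$ avoids $\{v,w_1,u_1\}$. The conclusion is that $\langle v,u_1,\tau\rangle$ and $\langle w_1,u_1,\tau\rangle$ are cones. Now $w_2$ can never lie in a cone together with $v$. So if $\langle u_1,w_2\rangle$ were a cone, then taking $\tau=\langle w_2\rangle$ (checking that $w_2\neq u_1$ and $w_2\ne w_1$) would give that $\langle v,u_1,w_2\rangle$ is a cone, which is impossible. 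Therefore $w_2$ is an opponent of $u_1$, and symmetrically $w_1$ is an opponent of $u_2$. The possibility $w_2=u_1$ has to be handled separately. In that case the relations give $2v+w_1=u_2$, and I would treat it by the same contractibility test, using $\tau$ built from the cone $\langle v,u_1\rangle$.

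\textbf{Step 3: derive the contradiction.} By Step 1 applied to the pair $\{u_1,w_2\}$, its relation is either $u_1+w_2=0$, which is excluded since $m(X)\ge2$, or $u_1+w_2=u'$ for some ray $u'$. Substituting gives
\[
u'=v+w_1+w_2=u_2+w_1 .
\]
Now $\{w_1,u_2\}$ is also a primitive collection, and its relation is $w_1+u_2=u'$, a third degree-$1$ contractible relation. I would iterate this opponent-chasing with \cref{prop: contractible}. The aim is either to show that $\{v,w_1,w_2\}$ (or $\{u_1,u_2,\dots\}$) spans a cone, which contradicts the pairwise non-cone conditions, or to produce a degree $\le 0$ primitive relation. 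One candidate is the relation for $\{v,u'\}$: since $v+u' = 2v+w_1+w_2=u_1+u_2$, I would try to show that $\{v,u'\}$ is primitive with this focus, which has degree $0$ and contradicts Fano.

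I expect this final step to be the main obstacle: closing the loop so that the configuration $v,w_1,w_2,u_1,u_2,u'$ forces a forbidden cone or a non-positive degree. What I would try first is to verify that $\langle u_1,u_2\rangle$ is a cone and that $v$ and $u'$ are opponents. For the latter, I would apply contractibility of $w_1+u_2=u'$ with $\tau=\langle v\rangle$; this requires checking that $\langle u',v\rangle$ is a cone, and the case where it is not gives the opponent relation directly. Once these two facts are in place, $v+u'=u_1+u_2$ is a primitive relation of degree $0$, contradicting that $X$ is Fano.
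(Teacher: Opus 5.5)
The paper does not prove this statement itself; it imports it from \cite[Corollary 2.23]{team2023}, so I am judging your argument on its own terms. Steps 1 and 2 are sound, and the degree-$0$ relation you aim for in Step 3 is the right endgame. However, the argument is not closed as written. The fact $\langle u_1,u_2\rangle\in\Sigma_X$ appears only as something you would ``try first'', and everything hinges on it. Without it, $\{u_1,u_2\}$ could a priori be a primitive collection with relation $u_1+u_2=u''$. Then $\{v,u'\}$ would have the degree-one relation $v+u'=u''$, and nothing contradicts Fano-ness.

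The missing step follows from one more application of \cref{prop: contractible}. Applying it to the contractible relation $w_1+u_2=u'$ with $\tau=\{0\}$ gives $\langle u_2,u'\rangle\in\Sigma_X$. Now apply it to $u_1+w_2=u'$ with $\tau=\langle u_2\rangle$. This is allowed because $u_2\notin\{u_1,w_2,u'\}$, which uses $w_1\neq w_2$, $v\neq 0$ and $w_1\neq 0$. It yields $\langle u_1,u',u_2\rangle\in\Sigma_X$, hence $\langle u_1,u_2\rangle\in\Sigma_X$. Since $u_1+u_2$ lies in the relative interior of this cone, it is the focus of $\{v,u'\}$. Combined with your proof that $v$ and $u'$ are opponents, $v+u'=u_1+u_2$ is then a primitive relation of degree $0$, which is the desired contradiction.

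Two small points need tidying. First, the case $w_2=u_1$ needs no separate treatment: \cref{prop: contractible} with $\tau=\{0\}$ gives $\langle v,u_1\rangle\in\Sigma_X$, so $u_1$ is not an opponent of $v$. Second, when you use $\tau=\langle v\rangle$ for $w_1+u_2=u'$, you need $v\neq u'$, i.e.\ $w_1+w_2\neq 0$. This holds because otherwise $\{w_1,w_2\}$ would be a centered primitive collection of order $2$, contradicting $m(X)\geq 2$.

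Your ``iterate'' idea would also close the argument without the cone fact. By the same steps, $u_1+w$ is an opponent of $v$ different from $w_1$ for every opponent $w\neq w_1$. So translation by $u_1$ maps the finite set of such opponents injectively into itself, which forces $u_1=0$, a contradiction.
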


We make the following easy observation, which will be used several times in the arguments that follow.

\begin{remark}
\label{remark: cannot have relation a+b=2c+d with a b forming a cone}
    Assume that \(X\) is a proper toric manifold such that each vector $v\in G(\Sigma_X)$ admits at most one opponent. Assume that there exist distinct generators \(a,b,c,d \in G(\Sigma_X)\) such that \(\langle a,b\rangle\) is a cone in \(\Sigma_X\) and \(a+b=2c+d\).  
    Since two distinct cones of the same dimension must intersect along a proper face, we get that \(\{c,d\}\) is a primitive collection in \(X\).
    We claim that the degree of the corresponding primitive relation is at most \(0\).
    In particular, this situation cannot occur if \(X\) is Fano.
    To prove the claim, first notice that we cannot have \(c+d=0\), as otherwise we get \(a+b=c\) with vectors on both sides spanning cones.
    Moreover, if the corresponding primitive relation is \(c+d=e\), substituting it into the equality \(a+b=2c+d\) yields \(a+b=c+e\). As before, \(\{c,e\}\) must be a primitive collection, and both \(d\) and \(e\) are opponents of \(c\), which is a contradiction.
\end{remark}

\begin{lemma}\label{lemma:fewer_possibilities}
    Let $X$ be a toric manifold admitting a centered primitive relation of length $3$: $x_0 + x_1 + x_2 = 0$.
    \begin{enumerate}
        \item \label{item: ignore RPR x+x+a=b}
        If there is a relevant primitive relation of type \ref{item: RPC x+x+a=b}, say $x_i + x_j + a = b$, then $x_k + b = a$ is another relevant primitive relation on $X$, which is of type \ref{item: RPC x+a=b}.
        \item \label{item: no RPR x+x+a=2b}
        If $X$ is Fano, there are no relevant primitive relations of type \ref{item: RPC x+x+a=2b} on $X$.
    \end{enumerate}
\end{lemma}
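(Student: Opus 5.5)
For part (1), I will use the centered relation to rewrite the given relation. Since \(x_0+x_1+x_2=0\), we have \(x_i+x_j=-x_k\), so \(x_i+x_j+a=b\) becomes
\[
x_k+b=a .
\]
It then remains to show three things: that \(\{x_k,b\}\) is a primitive collection, that \(\langle a\rangle\) is its focus, and that it is relevant.

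\begin{itemize}
\item \emph{Relevance.} We need \(b\notin P\). This holds by Remark~\ref{remark: PC does not intersect focus}: \(b\) spans the focus of \(\{x_i,x_j,a\}\), which contains \(x_i,x_j\). It cannot equal \(x_k\) either, since that would give \(x_i+x_j+a=x_k\), hence \(a=2x_k\). Then \(a,x_k\) would both be primitive on the same ray, which is impossible.
\item \emph{Not a cone.} Suppose \(\langle x_k,b\rangle\in\Sigma_X\). Then \(a=x_k+b\) would lie in the interior of the two-dimensional cone \(\langle x_k,b\rangle\). But \(a\) is a ray generator of \(\Sigma_X\), so the ray \(\mathbb{R}_{\ge0}a\) would have to be a face of the fan meeting the relative interior of a cone of which it is not a face. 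This contradicts the fan property. Hence \(\{x_k,b\}\) is not a cone. Both singletons are rays, so it is a primitive collection.
\item \emph{Focus.} The sum \(x_k+b\) equals \(a\), which lies in the cone \(\langle a\rangle\). By uniqueness of the minimal cone containing a vector (the relative interiors of the cones of a complete fan partition \(\mathbb{R}^n\)), the focus is \(\langle a\rangle\) and the relation is \(x_k+b=a\).
\end{itemize}

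This is relevant of type~\ref{item: RPC x+a=b}. One should also check \(a\neq x_k\). That holds because otherwise \(x_i+x_j+x_k=b\) gives \(b=0\).

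For part (2), suppose \(x_i+x_j+a=2b\) with \(X\) Fano. Substituting the centered relation gives
\[
a = 2b + x_k .
\]
As in part (1), \(b\notin P\) by Remark~\ref{remark: PC does not intersect focus}, and \(a\neq x_k\). The idea is to derive a contradiction from \(a\) being a generator while equalling \(2b+x_k\).

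\begin{itemize}
\item If \(\langle b,x_k\rangle\) were a cone, then \(a\) would lie in its relative interior. This contradicts the fan property, as before.
\item Therefore \(\{b,x_k\}\) is a primitive collection, and \(x_k\) is an opponent of \(b\). Let \(\sigma\) be its focus, so \(b+x_k=\sum\mu_l w_l\) with \(\deg>0\) by Fano. The degree is \(2-\sum\mu_l\), so either \(b+x_k=0\) or \(b+x_k=w\) for a single generator \(w\).
\item If \(b+x_k=0\), then \(a=2b+x_k=b\), a contradiction.
\item If \(b+x_k=w\), then \(a=b+w\). Here \(b\neq w\) since \(x_k\neq0\). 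Thus \(a\) lies in the relative interior of \(\langle b,w\rangle\) if that is a cone, which is impossible. Otherwise \(\{b,w\}\) is a primitive collection, so \(w\) is a second opponent of \(b\), distinct from \(x_k\) (else \(b=0\)).
\end{itemize}

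The last case contradicts Proposition~\ref{prop:one_oponent}, provided \(m(X)\geq 2\). Here a centered relation of length 3 exists, but \(m(X)\) could be 1. If Proposition~\ref{prop:one_oponent} is unavailable, I would instead argue directly in the style of Remark~\ref{remark: cannot have relation a+b=2c+d with a b forming a cone}. Since \(\{b,w\}\) is primitive with sum \(a\), its relation has focus \(\langle a\rangle\) and degree \(1\). Combining this with \(b+x_k=w\) should give a contradiction with \(\{x_i,x_j,a\}\) spanning a cone.

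I expect this final case to be the main obstacle, namely ruling out \(b+x_k=w\) cleanly, possibly needing the one-opponent property.
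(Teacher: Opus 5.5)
Part (1) is correct and follows the paper's argument. You also supply the checks $b\neq x_k$ and $a\neq x_k$, which the paper leaves implicit.

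\textbf{The gap is in the last case of part (2).} To exclude that $\{b,w\}$ is a primitive collection, you invoke \cref{prop:one_oponent}, which requires $m(X)\geq 2$. The lemma assumes only that $X$ is Fano and has some centered relation of length $3$, so $m(X)=1$ is allowed; for instance $\PP^1\times\PP^2$ also has a centered relation of length $2$. Your argument therefore covers the applications in the paper, where $m(X)=2$, but not the statement as written. Your proposed fallback does not work as written: $\{x_i,x_j,a\}$ is a primitive collection, so it does not span a cone, and no contradiction can come from it ``spanning a cone''.

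\textbf{The missing idea} is to use the degree of the relation $x_k+b=w$ rather than counting opponents.
\begin{itemize}
\item This relation has degree $1$. On a toric Fano manifold every primitive relation of degree $1$ is extremal, as recalled in the remark following \cref{prop: contractible}, and hence contractible.
\item Apply \cref{prop: contractible} to the primitive collection $\{x_k,b\}$ with focus $\langle w\rangle$ and $\tau=\{0\}$. Omitting $x_k$ shows that $\langle b,w\rangle\in\Sigma_X$.
\item Then $a=b+w$ lies in the relative interior of the two-dimensional cone $\langle b,w\rangle$. You have already shown this is impossible for a ray generator $a$.
\end{itemize}
So your ``otherwise'' branch never occurs, and no hypothesis on $m(X)$ or on opponents is needed. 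This is exactly how the paper concludes. With this replacement, the rest of your part (2), including the separate treatment of $b+x_k=0$, goes through.
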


\begin{proof}
    To prove part \ref{item: ignore RPR x+x+a=b}, we add the third vector $x_k$ to both sides of $x_i + x_j + a = b$ and obtain the equality $x_k + b = a$.
    Since two cones are only allowed to intersect along faces, $x_k$ and $b$ cannot form a cone, and we conclude that $x_k + b = a$ is a primitive relation.

    For part \ref{item: no RPR x+x+a=2b}, assume that there is a primitive relation of the form  $x_i + x_j + a = 2b$.
    We add $x_k$ to both sides again and obtain $x_k+2b = a$.
    Again, $x_k$ and $b$ cannot form a cone, so we get a primitive relation $x_k + b = c$.
    It is of degree $1$, hence extremal, so $\langle c, b \rangle$ is a cone of $X$.
    On the other hand, substitution yields the relation $c+b=a$, and so $\langle c, b \rangle$ is not a cone of $X$, a contradiction.
\end{proof}

\begin{remark}
\label{remark: at most 3 of type x+a=b}
Let $X$ be a toric Fano manifold with $m(X)=2$ and  a centered primitive relation $x_0 + x_1 + x_2 = 0$.
By \cref{prop:one_oponent}, each $x_i$ has at most one opponent, so we can have at most three relevant primitive relations of type \ref{item: RPC x+a=b}, which are then of the form
\[x_0+a_0 = b_0 \hspace{.5in} x_1+a_1 = b_1 \hspace{.5in} x_2+a_2 = b_2. \]
In particular, the $a_i$'s are pairwise distinct, and we will explain that the $b_i$'s are also pairwise distinct.
Assume that \(x_0 + a_0 = b\) and \(x_1 + a_1 = b\), then we have the (non-primitive) relation \(x_0 + x_1 + a_0 + a_1 = 2b\).
By adding \(x_2\) to both sides and using \(x_0+x_1+x_2=0\), we get \(x_2 + 2b = a_0 + a_1\), and \(\langle a_0,a_1 \rangle\) is a cone by uniqueness of opponents.
But this cannot happen by \cref{remark: cannot have relation a+b=2c+d with a b forming a cone}.

It may happen that $a_i = b_j$ for some $i\neq j$. \cref{subsec: examples of exceptional cases} discusses several such examples. 
If this is the case, say if we have two primitive collections
\[
    x_1 + a = b, \qquad
    x_2 + b = c,
\]
then adding these two together and adding \(x_0\) to both sides yields another relevant primitive relation of type (\ref{item: RPC x+a=b}): \(x_0+c=a\).
We need to treat this case separately, and therefore we give the following definition.
\end{remark}

\begin{definition}\label{def:exceptional_case}
    Let $X$ be a toric Fano manifold with $m(X)=2$ and a centered primitive relation $r(P) \colon x_0 + x_1 + x_2 = 0$.
    Assume that $X$ possesses the following primitive relations:
    \begin{equation}
    \label{eq: exceptional decomposition}
        r(P_0) \colon x_0 + c = a, \qquad
        r(P_1) \colon x_1 + a = b, \qquad
        r(P_2) \colon x_2 + b = c.
    \end{equation}
    We then say that the centered primitive relation
    $r(P)$ admits an \emph{exceptional decomposition}
    \[
        r(P) = r(P_0) + r(P_1) + r(P_2),
    \]
    and call $X$ with such a choice of centered primitive collection $P$ an \emph{exceptional case}.
\end{definition}

We are now ready to explain how to construct the sequence of birational maps in Step 1.

\begin{construction}\label{construction}
    We start with a toric Fano manifold $X$ with $\dim(X)>2$, $m(X) = 2$ and a fixed centered primitive relation $r(P) \colon x_0 + x_1 + x_2 = 0$.
    \begin{enumerate}
        \item If there are primitive relations of type \ref{item: RPC x+a=b}, we perform the blowdown or a composition of blowdowns associated to them. Denote the result by $X \to X'$.
        By \cref{remark: at most 3 of type x+a=b}, $X \to X'$ is a composition of at most $3$ blowdowns.

        \item
        In \cref{subsection: blow-downs of type 1}, we show that $P \in \PC(X')$, that no new relevant primitive collections appear on $X'$, and those that remain do not change their primitive relations.
        In particular, $X'$ does not have relevant primitive relations of type \ref{item: RPC x+a=b}.
        
        \item
        If $\RPR(X')= \varnothing$, then set $Y=X'$.
        If $\RPR(X')\neq \varnothing$, then, by \cref{lemma:fewer_possibilities}, $\RPR(X')$ only contains relevant primitive relations of type \ref{item: RPC x+x+a=b+c}.
        We show in \cref{lemma: flipping loci do not intersect} that, for every pair of distinct $r(Q), r(Q') \in \RPR(X')$,
        we have $V(\sigma(Q)) \cap V(\sigma(Q')) = \varnothing$, i.e., the flipping loci do not intersect.
        Thus we can flip all of them at once, and denote the result by $X' \dashrightarrow Y$.
        \item
        We then show in \cref{lemma: no new RPR after all flips} that {$P \in \PC(Y)$ and that} 
        $\RPC(Y) = \varnothing$, and so $P$ induces a $\PP^2$-bundle on an open subset of $Y$ whose complement $V_Y(\cE_P)$ has codimension $\geq 2$.
    \end{enumerate}
\end{construction}

% -
\subsection{Relevant primitive relations after birational transformations}
% -

We end this section by describing what happens with a relevant primitive relation when we perform a blowdown as 
described in \cref{construction}(1).
The next lemmas will be used to verify the assertions made in  \cref{construction}(2).

\begin{lemma}
\label{lemma: RHS of an RPR is not contained in the centered PC}
    Let \(X\) be a proper toric manifold with a centered primitive relation \(x_0+\dots+x_m = 0\).
    Let \(\bar x = \overline{x'} \sqcup \overline{x''}\) be a decomposition of \(\bar x\) into two non-empty subsets.
    If \(\sum\overline{x'}+\sum \bar a= \sum \bar b\) is a contractible primitive relation for some \(\bar a, \bar b \subset G(\Sigma_X)\), then \(\bar b\not\subset \bar x\)
    and \(\overline{x''} \not\subseteq \bar b\).
    In particular, if a primitive relation \(x_1+a=b\) is contractible, then \(b\notin \bar x\), and if \(x_1+\cdots+x_m+a= \sum \bar b\) is contractible, then \(x_0 \notin \bar b\).
\end{lemma}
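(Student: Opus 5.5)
The plan is to extract everything from the contractibility criterion \cref{prop: contractible} applied to the primitive collection $P_0 := \overline{x'} \cup \bar a$, whose focus is spanned by $\bar b$. We may assume $\bar a \cap \bar x = \varnothing$. Indeed, if $\bar a \subseteq \overline{x''}$, then $P_0 \subseteq \bar x$, and since no proper subset of the primitive collection $\bar x$ is primitive, we get $P_0 = \bar x$. This is the centered relation itself, with $\bar b = \varnothing$, which the statement tacitly excludes. Otherwise the argument below only needs to remove some $v \in \bar a \setminus \bar x$.

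By \cref{remark: PC does not intersect focus}, $\bar b \cap P_0 = \varnothing$. The contractibility criterion, applied with a cone $\tau = \langle \bar z \rangle$ satisfying its hypotheses, says that $\langle P_0 \setminus \{v\}, \bar b, \bar z \rangle \in \Sigma_X$ for every $v \in P_0$. The contradictions will always come from producing such a cone that contains all of $\bar x$, which is impossible because $\bar x$ is a primitive collection.

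\textbf{Case $\overline{x''} \subseteq \bar b$.} First suppose $\bar a \neq \varnothing$. Take $\tau = \{0\}$ and remove some $v \in \bar a$. The resulting cone $\langle \overline{x'}, \bar a \setminus\{v\}, \bar b\rangle$ contains $\overline{x'} \cup \overline{x''} = \bar x$, a contradiction. Now suppose $\bar a = \varnothing$. Substituting $\sum \overline{x'} = -\sum \overline{x''}$ into the relation gives
\[
0 = \sum \overline{x''} + \sum \bar b .
\]
Since $\overline{x''} \subseteq \bar b$, this expresses $0$ as a nontrivial nonnegative integer combination of the generators of the cone $\langle \bar b \rangle$. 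That cone is strictly convex, so this is impossible.

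\textbf{Case $\bar b \subseteq \bar x$.} Since $\bar b$ is disjoint from $\overline{x'} \subseteq P_0$, we get $\bar b \subseteq \overline{x''}$. Substituting as before gives
\[
\sum \bar a = \sum(\overline{x''} \setminus \bar b) + 2\sum \bar b .
\]
If $\bar a = \varnothing$, the right-hand side is a nontrivial nonnegative combination of the generators of the cone $\langle \overline{x''} \rangle$. This cone exists because $\overline{x''} \subsetneq \bar x$, and strict convexity again gives a contradiction. If $\bar a \neq \varnothing$, take $\bar z = \overline{x''} \setminus \bar b$. This set is disjoint from $P_0 \cup \bar b$ by our reduction, and $\langle \bar b, \bar z \rangle = \langle \overline{x''} \rangle$ is a cone. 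So \cref{prop: contractible} applies, and removing $v \in \bar a$ yields a cone containing $\overline{x'} \cup \overline{x''} = \bar x$, a contradiction.

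The two special cases in the statement follow immediately. For $x_1 + a = b$, we have $\bar b = \{b\}$, and $\bar b \not\subset \bar x$ means exactly $b \notin \bar x$. For $x_1 + \dots + x_m + a = \sum \bar b$, we have $\overline{x''} = \{x_0\}$, and $\overline{x''} \not\subseteq \bar b$ means exactly $x_0 \notin \bar b$.

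The main point requiring care is the bookkeeping in the hypotheses of \cref{prop: contractible}: checking that $\bar z$ is disjoint from $P_0 \cup \bar b$. This is where the degenerate situation $\bar a \cap \bar x \neq \varnothing$ has to be disposed of first, which is why the reduction comes at the start.
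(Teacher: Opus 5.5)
Your proof is correct and is essentially the paper's argument: after noting $\bar b\cap\overline{x'}=\varnothing$, both apply the contractibility criterion (implicitly in the paper, with $\tau=\{0\}$ when $\overline{x''}\subseteq\bar b$, with $\tau=\langle\overline{x''}\setminus\bar b\rangle$ when $\bar b\subseteq\overline{x''}$, and removing an element of $\bar a$) to force $\langle\bar x\rangle$ to be a cone. You are right that the case $\overline{x'}\cup\bar a=\bar x$, where $\bar b=\varnothing$, must be tacitly excluded; your strict-convexity subcases with $\bar a=\varnothing$ are correct but vacuous, since $\overline{x'}\subsetneq\bar x$ spans a cone and so cannot itself be a primitive collection.
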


\begin{proof}
    By \cref{remark: PC does not intersect focus}, \(\bar b \cap \overline{x'} = \varnothing \).
    Note that \(\langle \overline{x''}\rangle\) is a cone in \(\Sigma_X\), since by assumption, \(\overline{x''} \subsetneq \bar x\).
    By \cref{prop: contractible}, if \(\bar b \subseteq \overline{x''}\) or \(\overline{x''} \subset \bar b\), then \(\langle \overline {x'}, \overline{x''} \rangle = \langle \bar x \rangle\) is a cone in \(\Sigma_X\), which contradicts the assumption that \(\bar x\) is a primitive collection.
\end{proof}

\begin{lemma}
\label{lem:P and opponents after blowdown}
    Let $X$ be a proper toric manifold with a centered primitive relation $x_0+\dots+x_m=0$.
    Let $X \to X_\gamma$ be the morphism corresponding to a contractible primitive relation of the form
    $$\gamma = r_X(Q) \colon \, x_1+a=b.$$ 
    Then the following statements hold.
    \begin{enumerate}
        \item $\bar x \in \PC(X_\gamma)$.
        \item No new opponents appear in $X_\gamma$, i.e., if $P=\{u,v\} \in \PC(X_\gamma)$ then $P \in \PC(X)$.
        \item
        \label{item: new RPC on a contraction}
        If \(m=2\) and there is a relevant primitive collection $P$ in $X_\gamma$ that is not a primitive collection in $X$, then, for some $i\in\{0,2\}$, we have $\{x_i,b\} \in \RPC(X)$ and $P=\{x_1,x_i,a\}$.
    \end{enumerate}
\end{lemma}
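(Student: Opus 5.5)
The plan is to read everything off from Sato's description of primitive collections under blowdowns, \cref{prop:PC_blowdown}, applied with $Q=\{x_1,a\}$, $\bar t=\{x_1,a\}$ and $z=b$. By that proposition, $X_\gamma$ is obtained by removing the ray through $b$, and $X$ is the blowup of $X_\gamma$ along $V(x_1,a)$. Every primitive collection of $X_\gamma$ then arises in one of two ways. Either it equals some $P_X\in\PC(X)$ with $b\notin P_X$ and $P_X\neq Q$. Or it has the form $(P_X\setminus\{b\})\cup\{x_1,a\}$ for some $P_X\ni b$.

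For (1), we first note that $b\notin\bar x$ by \cref{lemma: RHS of an RPR is not contained in the centered PC}, since $\gamma$ is contractible. Hence $\bar x$ is a primitive collection of $X$ not containing $b$, and $\bar x\neq Q$ because $m\ge1$ forces $|\bar x|\ge 2$ while $a\notin\bar x$. (If $a$ were some $x_j$, the relation would read $x_1+x_j=b$ with $\{x_1,x_j\}$ a proper subset of $\bar x$, so it would span a cone; this contradicts $\{x_1,a\}$ being primitive, at least when $m\geq 2$. When $m=1$ we have $x_1+x_0=0$, so $a\ne x_0$ anyway.) It remains to rule out that $\bar x$ arises in the second way. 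That would require $\{x_1,a\}\subseteq\bar x$, which is impossible since $a\notin\bar x$. Hence $\bar x\in\PC(X_\gamma)$.

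For (2), suppose $\{u,v\}\in\PC(X_\gamma)$ arises in the second way. Then $\{u,v\}\supseteq\{x_1,a\}$, so $\{u,v\}=\{x_1,a\}$ and $P_X=\{b\}$, which is absurd since singletons are not primitive collections. So only the first way is possible, giving $\{u,v\}\in\PC(X)$.

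For (3), take $m=2$ and a relevant $P\in\PC(X_\gamma)\setminus\PC(X)$. By the dichotomy, $P=(P_X\setminus\{b\})\cup\{x_1,a\}$ with $b\in P_X\in\PC(X)$. Relevance means $P=P'\cup\{a'\}$ with $P'\subsetneq\bar x$. Since $x_1,a\in P$ and $a\notin\bar x$, we get $a'=a$ and $x_1\in P'$. Since $P$ contains exactly one non-$x$ vector, namely $a$, $P_X\setminus\{b\}$ consists only of $x_i$'s and $a$. This leaves two possibilities: $P_X=\{b\}\cup S$, or $P_X=\{b,a\}\cup S$, with $S\subseteq\{x_0,x_2\}$ (and $x_1\in P_X$ is also possible).

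The main obstacle is pruning these cases using Sato's minimality condition, namely that no proper subset of $\{x_1,a\}$ can replace it. We proceed as follows.
\begin{itemize}
\item $P_X$ cannot contain $x_1$ (an element of $Q$) together with $b$: by the fan description of \cref{proposition: fan of a blowup}, $\langle x_1,b\rangle$ is a cone of $X$, and we would need to check this does not contradict primitivity.
\item If $P_X\ni a$ and $b$, then $\{a,b\}$ is a cone of $X$, again by the blowup description. Then $(P_X\setminus\{b\})\cup\{a\}$ would be a smaller candidate violating minimality.
\item The case $P_X=\{b\}$ alone is impossible.
\end{itemize}
Hence $P_X=\{x_i,b\}$ or $\{x_0,x_2,b\}$. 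The latter gives $P=\{x_0,x_1,x_2,a\}\supsetneq\bar x$, which contradicts primitivity of $P$ in $X_\gamma$, using (1). So $P_X=\{x_i,b\}$ with $i\in\{0,2\}$, which is relevant in $X$, and $P=\{x_1,x_i,a\}$.
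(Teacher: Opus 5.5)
Part (1) is fine, and your framework (Sato's description of $\PC(X_\gamma)$ from \cref{prop:PC_blowdown}) is the same as the paper's. The genuine gap is in (3). It sits exactly at the two cases that carry the content of the statement: $P_X=\{x_i,x_1,b\}$ and $P_X=\{x_i,a,b\}$ with $i\in\{0,2\}$. Either would produce $P=\{x_i,x_1,a\}$ from a primitive collection other than $\{x_i,b\}$.

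Your first bullet contains no argument. That $\langle x_1,b\rangle$ is a cone of $\Sigma_X$ does not by itself prevent $\{x_i,x_1,b\}$ from being primitive, since every proper subset of a primitive collection spans a cone anyway. Your second bullet is incorrect. For $P_X=\{x_i,a,b\}$, the test set $(P_X\setminus\{b\})\cup\{a\}=\{x_i,a\}$ is a proper subset of the primitive collection $P_X$. Hence it spans a cone and is not in $\PC(X)$. The other test set, $\{x_i,x_1,a\}$, is not in $\PC(X)$ by hypothesis. So Sato's minimality condition holds in both cases and cannot be what rules them out.

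The paper closes these cases as follows. In both cases $\langle x_i,b\rangle\in\Sigma_X$, being a proper subset of $P_X$. Write the primitive relation of $P$ in $X_\gamma$ as $x_i+x_1+a=\sum\bar v$, with $\langle\bar v\rangle\in\Sigma_{X_\gamma}$ and $\bar v$ disjoint from $\{x_i,x_1,a,b\}$. The cone $\langle\bar v\rangle$ does not contain $\{x_1,a\}$, so it is still a cone of $\Sigma_X$ by \cref{proposition: fan of a blowup}. Substituting $x_1+a=b$ gives $x_i+b=\sum\bar v$, with both sides spanning cones of $\Sigma_X$ with disjoint generators, which is impossible.

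A shorter fix within your own framework is to apply \cref{prop:PC_blowup} to $X\to X_\gamma$. Every primitive collection of $X$ containing $b$ has the form $(P_Y\setminus\{x_1,a\})\cup\{b\}$, so it contains neither $x_1$ nor $a$. This disposes of both bullets at once.

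There is also a smaller slip in (2). From $(P_X\setminus\{b\})\cup\{x_1,a\}=\{u,v\}$ you cannot conclude $P_X=\{b\}$; a priori $P_X$ could be $\{x_1,b\}$, $\{a,b\}$ or $\{x_1,a,b\}$. The correct reason is simply that $\langle x_1,a\rangle$ is a cone of $\Sigma_{X_\gamma}$, being the center of the blowup, so $\{x_1,a\}\notin\PC(X_\gamma)$.
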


\begin{proof}
    By \cref{prop:PC_blowdown}, the primitive collections of $X_\gamma$ are:
    \begin{align*}
     \PC(X_\gamma) & \ = \  \{ P \in \PC(X) \, | \, P \neq Q, b \notin P\} \  \cup \\
    &  \big\{ (P \setminus \{b\}) \cup \{x_1,a\} \ \big| \ b \in P\in \PC(X) \ \& \ (P \setminus \{b\})\cup \{\alpha\} \notin \PC(X) \text{ for } \alpha\in \{a,x_1\}  \big\}. 
    \end{align*}
    Part 1 and part 2 follow immediately from this description and \cref{lemma: RHS of an RPR is not contained in the centered PC}.

    We now prove part 3. Suppose there exists a relevant primitive collection $P$ in $X_\gamma$ which is not a primitive collection in $X$. It follows from the description of $\PC(X_\gamma)$ above that 
    $P = \{x_i, x_1, a\}$ for some $i\neq 1$ and $P=(P' \setminus \{b\}) \cup \{x_1,a\}$ for some $P'\in \PC(X)$.
    Then $P'$ must be one of the following: 
    \begin{itemize}
        \item[(1)] $P_1 =\{x_i, b\}$;
        \item[(2)]  $P_2 =\{x_i, x_1, b\}$; or 
        \item[(3)] $P_3 =\{x_i, a, b\}$. 
    \end{itemize}
    We show that neither (2) nor (3) can occur.
    Without loss of generality, suppose that $i=0$ and $P = \{x_0, x_1, a\} \in \PC(X_\gamma)$ comes from either $P_2=\{x_0,x_1, b\} \in \PC(X)$, or $P_3 = \{x_0, a, b\} \in \PC(X)$.
    In either case, $\langle b, x_0 \rangle$ is a cone in \(\Sigma_X\). The primitive collection $P$ gives a primitive relation on $X_\gamma$ 
    $$r_{X_\gamma}(P) \colon x_0+x_1 +a = \sum\overline{v},$$ 
    where the vectors in $\overline{v}$ form a cone in $\Sigma_{X_\gamma}$ and $a,b, x_1,x_0 \notin \overline{v}$.
    By \cref{proposition: fan of a blowup}, $\langle \overline{v} \rangle$ is  also a cone in $\Sigma_{X}$. 
    On the other hand, substituting the primitive relation $x_1+a = b$ in $r_{X_\gamma}(P)$ gives the following relation:
    \[\sum \overline{v} = b+ x_0 ,\]
    with both sides forming a cone in $X$, which is a contradiction.  We conclude that $P=P_1 \setminus \{b\} \cup \{x_1,a\}$ with $P_1=\{x_0, b\} \in \PC(X)$.
\end{proof}

\begin{proposition} \label{prop: rpr after blowndown}
    Let $X$ be a proper toric manifold with a centered primitive relation $x_0+x_1+x_2=0$. 
    Assume that each generator of $\Sigma_X$ admits at most one opponent, and that the relation corresponding to any relevant primitive collection of $X$ is of type \ref{item: RPC x+a=b}, \ref{item: RPC x+x+a=b+c} or \ref{item: RPC x+x+a=b}.
    Let $X \to X_\gamma$ be the morphism corresponding to a contractible primitive relation of the form
    \begin{equation}
    \label{eq: prop blowdown induction: PC on Xgamma}\gamma = r_X(Q) \colon \, x_1+a=b.
    \end{equation}
     If $P$ is a relevant primitive collection of both $X$ and $X_\gamma$, then $r_X(P)=r_{X_\gamma}(P)$.
\end{proposition}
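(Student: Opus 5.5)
The plan is to compare the foci of $P$ on the two varieties. By \cref{prop:PC_blowdown}, the fan $\Sigma_{X_\gamma}$ is obtained from $\Sigma_X$ by removing the ray $b$, and $X$ is the blowup of $X_\gamma$ along $V(x_1,a)$. By the explicit description in \cref{proposition: fan of a blowup}, every cone of $\Sigma_X$ whose generators do not include $b$ is a cone $\sigma\in\Sigma_{X_\gamma}$ with $\{x_1,a\}\not\subseteq G(\sigma)$.

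Write $r_X(P)\colon \sum P=\sum \mu_j w_j$ with $\sigma_X(P)=\langle \bar w\rangle$. Suppose $b\notin\bar w$. Then $\langle\bar w\rangle$ is a cone of $\Sigma_{X_\gamma}$, and $\sum P$ lies in its relative interior. The focus $\sigma_{X_\gamma}(P)$ is the unique cone of the complete fan $\Sigma_{X_\gamma}$ containing $\sum P$ in its relative interior, so it equals $\langle\bar w\rangle$. The coefficients agree because a unimodular cone has linearly independent generators. This gives $r_{X_\gamma}(P)=r_X(P)$. So the whole proof reduces to showing that $b\notin G(\sigma_X(P))$ for every relevant $P$ surviving in $X_\gamma$.

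For this I would split according to the type of $r_X(P)$, using the hypothesis that only types \ref{item: RPC x+a=b}, \ref{item: RPC x+x+a=b+c}, \ref{item: RPC x+x+a=b} occur.

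\emph{Type \ref{item: RPC x+a=b}, $x_i+a'=b$.} If $i=1$, then $P=Q$, because $x_1$ has at most one opponent (\cref{prop:one_oponent}), and $Q\notin\PC(X_\gamma)$. If $i\neq1$, we get $x_i+a'=x_1+a$. Adding the third vector $x_k$ and using the centered relation gives identities such as $a'+x_k+2x_i = a + \dots$. Arguing as in \cref{remark: at most 3 of type x+a=b}, I would show that two distinct cones then meet in their relative interiors, or that some vector acquires two opponents. Both are contradictions.

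\emph{Type \ref{item: RPC x+x+a=b}, $x_i+x_j+a'=b$.} \cref{lemma:fewer_possibilities} produces $x_k+b=a'$. Since $b$ then has opponents $x_k$ and (from $\gamma$) possibly $x_1$, uniqueness of opponents forces $k=1$. In that case $x_1+b=a'$ and $x_1+a=b$, so $b$ has opponent $x_1$ twice; I would check that this is incompatible with $P$ surviving in $X_\gamma$ via the description of $\PC(X_\gamma)$ in \cref{prop:PC_blowdown}.

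\emph{Type \ref{item: RPC x+x+a=b+c}, $x_i+x_j+a'=b+c$.} Substitute $b=x_1+a$. If $1\in\{i,j\}$, cancelling yields $x_j+a'=a+c$. Here $\langle a,c\rangle$ should be a cone (the left side is not, since $P$'s proper subsets are cones but the relation identifies two cones), and I would derive a contradiction from the fact that $\langle x_j,a'\rangle$ is a cone in $X$, using that $\{x_j,a'\}\subsetneq P$. If $1\notin\{i,j\}$, then $\{i,j\}=\{0,2\}$. Using $x_0+x_2=-x_1$ gives $a'=2x_1+a+c$. This exhibits $a'$ as a positive combination, which I would rule out by the cone-intersection argument of \cref{remark: cannot have relation a+b=2c+d with a b forming a cone} together with the one-opponent hypothesis.

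The main obstacle will be these case eliminations, especially type \ref{item: RPC x+x+a=b+c}, since Fano-ness is not assumed and \cref{remark: cannot have relation a+b=2c+d with a b forming a cone} cannot be quoted verbatim. Only its cone-intersection and opponent parts are available. If a direct contradiction fails in some subcase, I would fall back on \cref{prop:PC_blowdown}: a primitive collection of $X_\gamma$ that contains neither $b$ nor $Q$ must satisfy certain cone conditions, and these should force $b$ out of the focus.
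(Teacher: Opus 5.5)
Your reduction to proving $b\notin G(\sigma_X(P))$ is exactly the paper's. Your type~\ref{item: RPC x+x+a=b+c} sketch can also be completed along the paper's lines: for $\{i,j\}=\{0,2\}$, rewrite $a'=2x_1+a+c$ as $a+a'=2b+c$, where $\langle a,a'\rangle$ and the focus $\langle b,c\rangle$ are both cones. The genuine gap is type~\ref{item: RPC x+a=b} with $i\neq 1$, where cone-intersection and opponent counting alone do not close the argument. Adding $x_i+a'=b$, $\gamma$ and $x_k$ (with $k$ the third index) gives $a+a'=2b+x_k$ with $\langle a,a'\rangle$ a cone, so $\{x_k,b\}$ is a primitive collection. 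Your tools then exclude $x_k+b=0$ and $x_k+b=e$. They do not exclude a relation for $\{x_k,b\}$ whose right-hand side has total coefficient at least $2$. Indeed, $\langle b,\bar d\rangle$ need not be a cone, and a primitive collection of order $\geq 3$ through $b$ does not violate uniqueness of opponents. In \cref{remark: cannot have relation a+b=2c+d with a b forming a cone}, on which \cref{remark: at most 3 of type x+a=b} relies, this case is excluded only by the Fano condition, which you do not have.

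The missing idea is to apply the hypothesis on relevant relations to the \emph{new} collection $\{x_k,b\}$. It is relevant, since $b\notin\{x_0,x_1,x_2\}$ by \cref{lemma: RHS of an RPR is not contained in the centered PC}. Being of order two, its relation must therefore be of type~\ref{item: RPC x+a=b}, i.e.\ $x_k+b=e$, which was just ruled out. This is precisely the paper's argument. The identity $a=a'+x_k+2x_i$ you suggest is a dead end, since $\{x_i,a'\}$ spans no cone.

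Your type~\ref{item: RPC x+x+a=b} argument contains a false step. The vector $x_1$ is not an opponent of $b$: $\langle x_1,b\rangle$ is a cone of the blowup $X$ by \cref{proposition: fan of a blowup}. So uniqueness of opponents does not force $k=1$. On the contrary, $k=1$ is impossible outright, since $x_k+b=a'$ would then make $x_1$ have the two opponents $a$ and $b$. The case that actually requires an argument, $1\in\{i,j\}$, is never treated in your proposal. It is settled as in type~\ref{item: RPC x+x+a=b+c}: with $i=1$, subtracting $\gamma$ gives $x_j+a'=a$ with $\langle x_j,a'\rangle$ a cone. This is absurd, since the ray $a$ would lie in the relative interior of a two-dimensional cone. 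The paper treats types~\ref{item: RPC x+x+a=b+c} and \ref{item: RPC x+x+a=b} together by allowing $d=0$ in $x_i+x_j+c=b+d$.
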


\begin{proof}
    Let $P$ be a relevant primitive collection of both $X$ and $X_\gamma$. 
    Let
    \[
    r_{X}(P) \colon \sum P = \beta b + \overline \eta \cdot \overline v
    \]
    be its corresponding primitive relation in $X$, where $\beta \geq 0$ and $\overline \eta$ is a vector of positive integers, while $\overline v$ is a tuple of vectors in $G(\Sigma_{X})$ not containing $b$ and spanning a cone in $\Sigma_{X}$.
    Since $\Sigma_{X}$ is obtained from $\Sigma_{X_\gamma}$ by the star subdivision of $\langle x_1, a \rangle$, we see that $\overline v$ still spans a cone in $\Sigma_{X_\gamma}$. 
    Therefore, it suffices to show that $\beta = 0$, because then \(\sum P = \overline \eta \cdot \overline v\) is evidently a primitive relation in \(X_\gamma\).
    
    Assume the contrary.
    By the assumptions on the relevant primitive relations of $X$, $r_X(P)$ has one of the following forms:
    \[x_i+c=b, \quad x_i+x_j+c = b + d , \quad x_i+x_j+c =  b 
    .\]
    
    Consider the first case, where we have $x_i+c = b$ on $X$. 
    Note that $i \not = 1$, otherwise $c = a$ and $\{x_1,a\}$ forms at the same time a cone and a primitive collection in $X_\gamma$, which is impossible. 
    By adding $x_i+c = b$, \eqref{eq: prop blowdown induction: PC on Xgamma} and $x_j=x_j$ for $j \not = 1, i$, we get $x_0+x_1+x_2+a+c=2b+x_j$, and the relation $x_0+x_1+x_2=0$ implies that $a+c=2b+x_j$.
    By \cref{remark: cannot have relation a+b=2c+d with a b forming a cone}, there is a primitive relation \(x_j + b = \bar d \) with \(|\bar d|>1\), which contradicts the assumption on the form of relevant primitive relations of $X$.

    We now treat the two remaining cases at once: assume that on $X$ we have a primitive relation $x_i+x_j +c= b+d$, with possibly $d = 0$. 
    In particular, $\langle b,d \rangle$ is a one- or two-dimensional cone in $\Sigma_{X}$.
    We first show that $x_i \neq x_1$. If $x_i = x_1$, then subtracting \eqref{eq: prop blowdown induction: PC on Xgamma} from this relation yields $x_j+c=a+d$. Here, $x_j$ and $c$ form a cone because $\{x_i,x_j,c \}\in \PC(X)$, while $d$ and $a$ form a cone since $a$ can have at most one opponent. This is a contradiction, and therefore the posited relation is $x_0 + x_2 + c = b + d$. 
    But now adding \eqref{eq: prop blowdown induction: PC on Xgamma} yields 
    $a+c = 2b+d$.
    Since the opponent of \(a\) is \(x_1\), we have that \(\{a,c\}\) spans a cone, while $\{b,d\}$ spans a cone from the above, and this leads to a contradiction.
\end{proof}

% ---
\section{Step 1: Construction of birational maps}
\label{sec:construction maps}
% ---

Let $X$ be a toric Fano manifold with $\dim(X)>2$ and $m(X) = 2$, 
and fix a centered primitive relation 
$$x_0 + x_1 + x_2 = 0.$$
As outlined in \cref{construction}, we wish to modify $X$ in two steps. 
First, we will perform a sequence of at most three smooth blowdowns $X\to X'$.
While $X'$ may cease to be Fano, we will show that $X'$ does not have any new relevant primitive relations, i.e., if $P$ is a relevant primitive collection of $X'$, then $P$ is also a relevant primitive collection of $X$, and their corresponding primitive relations are the same.
It will follow from our construction of $X\to X'$ that $X'$ does not have relevant primitive relations of type \ref{item: RPC x+a=b}.
As a consequence, $X'$ can only have relevant primitive relations of type \ref{item: RPC x+x+a=b+c}.
We will show that the flipping loci corresponding to these relations do not intersect in \cref{lemma: flipping loci do not intersect}, and so we can perform all flips at once, yielding a birational map $X'\dashrightarrow Y$.
While $Y$ may not be projective, it still carries the same centered primitive relation we fixed in $X$, and no relevant primitive relations.
As a result, by \cref{proposition: Pk bundle}, $Y$ contains a big open subset with a $\PP^2$-bundle structure.

% -
\subsection{Blowdowns}
\label{subsection: blow-downs of type 1}
% -

Let $X$ be a toric Fano manifold of minimal \(\PP\)-dimension $m(X)=2$.
Choose a centered primitive collection $\cpc$ with primitive relation
\begin{equation}
\label{eq: fixed CSPR}
    r(\cpc) \colon x_0 + x_1 + x_2 = 0 .
\end{equation}

The goal is to construct a finite sequence of smooth blowdowns whose composition $X \to X'$ yields a proper toric manifold $X'$ such that $\cpc$ is still centered and primitive in $X'$, and all relevant primitive collections in $X'$ are of type \ref{item: RPC x+x+a=b+c} and contractible.

\begin{proposition}
\label{proposition: doing all blow-downs}
Let $X$ be a toric Fano manifold with $m(X)=2$ and fix a centered primitive collection $\cpc$ as in (\ref{eq: fixed CSPR}).
Then there exists a toric morphism $X \to X'$ which is a composition of at most three blowdowns, where $X'$ is a proper toric manifold that satisfies the following properties:
\begin{itemize}
    \item $\cpc$ is a primitive collection of $X'$.
    \item 
    No new opponents appear on $X'$, i.e., if $\{u, v\} \in \PC(X')$ then $\{u, v\} \in \PC(X)$.
    \item
    If $P \in \RPC(X')$, then $P \in \RPC(X)$, $r_{X'} (P) = r_X (P)$ is of type \ref{item: RPC x+x+a=b+c} and contractible in $X'$.
\end{itemize}
\end{proposition}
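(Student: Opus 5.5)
We plan to contract the type \ref{item: RPC x+a=b} relations one at a time, using \cref{lem:P and opponents after blowdown} and \cref{prop: rpr after blowndown} at each step. By \cref{remark: at most 3 of type x+a=b}, $X$ has at most three relevant relations of type \ref{item: RPC x+a=b}, of the form $x_i+a_i=b_i$. Since $X$ is Fano, each has degree $1$ and is therefore extremal, hence contractible.

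We first treat the non-exceptional case. Contract one relation, say $\gamma\colon x_1+a_1=b_1$, to obtain $X_1$. The contraction removes the ray $b_1$, and $X\to X_1$ is the blowup of $V(x_1,a_1)$.
\begin{itemize}
\item By \cref{lem:P and opponents after blowdown}(1),(2), $\cpc\in\PC(X_1)$ and no new opponents appear. In particular, each generator of $X_1$ still has at most one opponent.
\item By part (3), any new relevant primitive collection has the form $\{x_1,x_i,a_1\}$ with $\{x_i,b_1\}\in\RPC(X)$. That is, $b_1=a_i$ for some $i$, which links two type \ref{item: RPC x+a=b} relations; by \cref{remark: at most 3 of type x+a=b}, this is the only source of new relevant collections.
\item If no such link exists, then $\RPC(X_1)\subseteq\RPC(X)$. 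By \cref{prop: rpr after blowndown}, whose hypotheses hold by \cref{lemma:fewer_possibilities}, the relations are unchanged.
\end{itemize}
The surviving relations $x_j+a_j=b_j$ must still be contractible on $X_1$ for the next blowdown. Since their relations are unchanged and they do not contain $b_1$, we would check this with \cref{prop: contractible}, using that cones of $X_1$ not containing $\{x_1,a_1\}$ are cones of $X$. Alternatively, we would order the blowdowns and use that disjoint centers commute. Iterating at most three times gives $X'$.

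We then verify the third bullet. By \cref{lemma:fewer_possibilities}, the type \ref{item: RPC x+x+a=b} relations of $X$ come paired with type \ref{item: RPC x+a=b} ones. Through the blowdown descriptions in \cref{prop:PC_blowdown}, such collections disappear once the corresponding $b$ is removed. Type \ref{item: RPC x+a=b} relations cannot survive since they have all been contracted. So what remains is type \ref{item: RPC x+x+a=b+c}, with unchanged relations. Contractibility on $X'$ then needs a separate argument: in $X$ these are contractible by Fano-ness, since they have degree $1$. Because their foci $\{b,c\}$ do not involve removed rays, we would pass contractibility down through each blowdown by checking the criterion of \cref{prop: contractible} directly. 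The key input is that cones of $X_\gamma$ avoiding the removed star either are cones of $X$ or contain $\{x_1,a\}$, and the latter case we would rule out using the one-opponent property.

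The exceptional case of \cref{def:exceptional_case} is the main obstacle. There, the three relations $x_0+c=a$, $x_1+a=b$, $x_2+b=c$ are cyclically linked, and contracting one, say $x_1+a=b$, creates the new relevant collection $\{x_1,x_2,a\}$. A short computation gives its relation on $X_1$ as $x_1+x_2+a=c$, of type \ref{item: RPC x+x+a=b}. After contracting, $x_2+b=c$ no longer exists. My plan is instead to contract two of the relations: after the first blowdown, $x_0+c=a$ survives (it does not contain $b$) and remains contractible. We then contract it, removing $a$, and recompute with \cref{prop:PC_blowdown} that the resulting relevant collections are absent or of type \ref{item: RPC x+x+a=b+c}. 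I expect this case analysis, together with verifying contractibility after each step, to be the delicate part.
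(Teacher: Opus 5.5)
Your plan is correct and follows the paper's proof: you contract the type~\ref{item: RPC x+a=b} relations one at a time and control $\PC$ and $\RPR$ via \cref{lem:P and opponents after blowdown} and \cref{prop: rpr after blowndown}, noting that in the non-exceptional branch a link $b_1=a_i$ cannot occur, since by \cref{remark: at most 3 of type x+a=b} it forces the third relation and hence the exceptional case; in the exceptional case you contract $x_1+a=b$ and then $x_0+c=a$. The two steps you leave to direct verification are quick with tools already in the paper. Contractibility of surviving relations after each blowdown is immediate from \cref{lem:contractible after blowdown}, and the exceptional-case check you flag as delicate follows from part~\ref{item: new RPC on a contraction} of \cref{lem:P and opponents after blowdown}: a new collection $\{x_i,x_0,c\}$ would need $\{x_i,a\}\in\PC(X_1)$, which forces $i=1$ by uniqueness of opponents, yet $\langle x_1,a\rangle$ is a cone of $X_1$.
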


\begin{proof}
If there are no relevant primitive relations of type \ref{item: RPC x+a=b}, then we set \(X \to X':=X\) to be the identity morphism.

From now on, we assume that there exists a relevant primitive relation of type \ref{item: RPC x+a=b}.
Without lost of generality, we may assume $P_1 = \{x_1, a\}$ is a primitive collection  of $X$,
and write \(r(P_1) : x_1 + a = b\) for the corresponding relevant primitive relation.
Let \(X \to X^1\) denote the divisorial contraction associated to \(r(P_1)\).

\paragraph{First case: $(X, \cpc)$ is exceptional, with exceptional decomposition \eqref{eq: exceptional decomposition}.}
By \cref{prop:PC_blowdown}, $P_0 \coloneqq \{x_0,c\} \in \PC(X^1)$, and $r_{X^1}(P_0) \colon x_0 + c= a$ is a contractible relation of $X^1$ by \cref{lem:contractible after blowdown}.
Denote by $f^1\colon X^1 \to X'$ the divisorial contraction induced by $r_{X^1}(P_0)$.
It follows immediately from \cref{prop:PC_blowdown} that $\cpc \in \PC(X')$.
By \cref{lem:P and opponents after blowdown}, no new opponents appear in \(X^1\) and $X'$, so any generator of \(\Sigma_{X^1}\) and $\Sigma_{X'}$ has at most one opponent.

We will now prove that $\RPC(X^1) \subseteq \RPC(X) \cup \big\{\{x_1,x_2,a\}\big\}$ and $\RPC(X') \subseteq \RPC(X)$.
The first inclusion follows from \cref{lem:P and opponents after blowdown} (\ref{item: new RPC on a contraction}).
Now suppose there exists a relevant primitive collection $R$ in $X'$ which is not a primitive collection in $X^1$.
Again by \cref{lem:P and opponents after blowdown} (\ref{item: new RPC on a contraction}), $R=\{x_i,x_0,c\}=(\{x_i,a\} \setminus \{a\}) \cup \{x_0,c\}$, where $R_1 \coloneqq \{x_i,a\} \in \PC(X^1)$ and $i \neq 0$. 
From the first inclusion, $\{x_i,a \} \in \PC(X)$, and by the uniqueness of opponents, we have necessarily $i=1$.
It follows that $\{x_1,a\} \in \PC(X^1)$ and at the same time $\langle x_1, a \rangle \in \Sigma_{X^1}$, which is impossible.

Finally, it follows from \cref{prop: rpr after blowndown} and \cref{lem:contractible after blowdown} applied to $X$ and $X^1$ that $r_{X'}(P)=r_X(P)$ for any $P \in \RPC(X')$, and that this relation is contractible in $X'$.
Furthermore, since \(G(\Sigma_{X'})\) contains neither \(a\) nor \(b\) by \cref{proposition: fan of a blowup}, there are no more relevant primitive relations of type \ref{item: RPC x+a=b} in $X'$.
Thus, by \cref{lemma:fewer_possibilities}, all relevant primitive relations in $X'$ are of type \ref{item: RPC x+x+a=b+c}. 

\paragraph{Second case: $(X, \cpc)$ is not exceptional.}
Let \(L\) be the number of relevant primitive relations of type \ref{item: RPC x+a=b} of $X$.
By \cref{remark: at most 3 of type x+a=b}, \(L\leq 3\).
Set $X^0 \coloneqq X$ and recall from the beginning of the proof that \(X^0 \to X^1\) is the divisorial contraction associated to \(r(P_1) : x_1 + a = b\).
By \cref{lem:P and opponents after blowdown}, 
\begin{enumerate}
    \item $\cpc \in \PC(X^1)$;
    \item no new opponents appear in $X^1$, i.e., if $\{u, v\} \in \PC(X^1)$ then $\{u, v\} \in \PC(X^{0})$, and any generator of $\Sigma_{X^1}$ has at most one opponent.
\end{enumerate}
We claim that moreover $\RPC(X^1) \subseteq \RPC(X^{0})$.
Suppose that there exists a relevant primitive collection $P$ of $X^{1}$ which is not a primitive collection of $X^0$.
By part \ref{item: new RPC on a contraction} of \cref{lem:P and opponents after blowdown}, $P=\{x_i,x_1,a\}=( \{x_i,b\} \setminus \{b\}) \cup \{x_1,a\}$ where $Q \coloneqq \{x_i,b\} \in \PC(X^0)$ and $i \neq 1$, say $Q=\{x_2,b\}$. Then the associated relation of $X^0$ is of the form $x_2+b=c$.
It follows that in $X^{0}$ we have the relations \eqref{eq: exceptional decomposition}, thus $(X,\cpc)$ is an exceptional case, which contradicts the assumption of the second case and proves the claim.
By \cref{prop: rpr after blowndown}, for any $P \in \RPC(X^1)$, we have $r_{X^1}(P)=r_{X^{0}}(P)$.
If this is a relevant primitive relation of type \ref{item: RPC x+a=b} or \ref{item: RPC x+x+a=b+c}, then it is contractible in $X^1$ by \cref{lem:contractible after blowdown}.

Let us proceed by induction.
Assume that for any $k=1,\dots,\ell$, where $\ell < L$, the morphism $X^{k-1} \to X^k$ is induced by a relevant primitive relation of type \ref{item: RPC x+a=b} of $X^{k-1}$, and that 
\begin{enumerate}
    \item $\cpc \in \PC(X^k)$;
    \item no new opponents appear in $X^k$, i.e., if $\{u, v\} \in \PC(X^k)$ then $\{u, v\} \in \PC(X^{k-1}) \cap \PC(X)$, and any generator of $\Sigma_{X^k}$ has at most one opponent;
    \item $\RPC(X^k) \subseteq \RPC(X^{k-1}) \subseteq \RPC(X)$;
    \item for any $P \in \RPC(X^k)$, we have $r_{X^k}(P)=r_{X^{k-1}}(P)=r_X(P)$ and, if this is a relevant primitive relation of type \ref{item: RPC x+a=b} or \ref{item: RPC x+x+a=b+c}, then it is contractible in $X^k$.
\end{enumerate}
Since $\ell < L$, by the induction hypothesis, there is still a relevant primitive relation of type \ref{item: RPC x+a=b}, call it $P' \in \RPC(X^\ell)$, and it is contractible in $X^\ell$. 
Denote by $X^\ell \to X^{\ell+1}$ the morphism induced by $r_{X^\ell}(P')$. 
The same argument used to treat the morphism \(X^0 \to X^1\) above shows that $X^{\ell+1}$ satisfies the properties 1--4 above.

Finally, it follows from properties 1--4 that, after performing the $L$ blowdowns induced by relevant primitive relations of type \ref{item: RPC x+a=b}, we obtain a proper toric manifold $X'$ with no more relevant primitive relations of type \ref{item: RPC x+a=b}. By \cref{lemma:fewer_possibilities},  $X'$ does not have relevant primitive relations or type \ref{item: RPC x+x+a=b} either.
This concludes the proof.
\end{proof}

It follows from the proof of \cref{proposition: doing all blow-downs} that $X'$ is obtained from $X$ by performing at most three smooth blowdowns.

In \cref{subsec: examples of exceptional cases}, we provide explicit examples of exceptional cases having a unique centered primitive relation, illustrating the need to consider the exceptional case separately in the proof of \cref{proposition: doing all blow-downs}.

% -
\subsection{Flips}
% -

Let $X$ be a toric Fano manifold with $m(X)=2$ and a centered primitive relation $x_0+x_1+x_2=0$.
By \cref{proposition: doing all blow-downs}, we have a toric birational morphism $X \to X'$ such that $x_0+x_1+x_2=0$ is still a primitive relation in $X'$, no new opponents appear in $X'$, $\RPR (X') \subseteq \RPR (X)$, and the relevant primitive relations are all of type \ref{item: RPC x+x+a=b+c} and contractible in $X'$.
In this subsection, we construct a toric birational map $X'\dashrightarrow Y$ such that $Y$ is smooth and proper, it still has $x_0+x_1+x_2=0$ as a primitive relation, but no relevant primitive relation.

We will prove auxiliary lemmas in a slightly more general setting.
\begin{assumption}
\label{assumption: X'}
    Let $X'$ be a proper toric manifold such that 
    \begin{enumerate}
        \item $X'$ has a centered primitive relation $r(\cpc) \colon x_0+x_1+x_2=0$;
        \item Any generator of $\Sigma_{X'}$ has at most one opponent;
        \item All relevant primitive relations of $X'$ are contractible and of type \ref{item: RPC x+x+a=b+c}, i.e., of the form $x_i + x_j + a = b + c$.
    \end{enumerate}
\end{assumption}

\begin{setup}
\label{notation: flipping loci}
\label{rmk:how the cones differ in a flip}
    Under \cref{assumption: X'}, let us denote the relevant primitive collections of $X'$ by $\RPC(X') = \{ Q_1, \dots, Q_N \}$.
    The relation associated to a given $Q_i$, say  $r(Q_i) \colon x_k + x_\ell + a = b + c$, is contractible by assumption, and so we can perform a flip
    $X' \dashrightarrow X''$ (recall \cref{setup_flips}):
    \[\begin{tikzcd}
    	& {\tilde X} \\
    	X' && X'' \, .
    	\arrow["{b + c = z}"', from=1-2, to=2-1]
    	\arrow["{x_k + x_\ell + a = z}", from=1-2, to=2-3]
    	\arrow[dashed, from=2-1, to=2-3]
    \end{tikzcd}\]
    We note that the fans $\Sigma_{X'}$ and $\Sigma_{X''}$ coincide outside the $4$-dimensional (non-simplicial) cone
    \[ \tau \ = \ \big\langle x_k , x_\ell , a , b , c \big\rangle \, , \]
    while $\tau$ is the union of different $4$-dimensional cones in $\Sigma_{X'}$ and $\Sigma_{X''}$. Namely,
    \[ \tau \ = \ \tau'_1 \cup \tau'_2 \cup \tau'_3 \ = \  \tau''_1 \cup \tau''_2 \  , \]
    where
    \[ \tau'_1 = \big\langle x_k , x_\ell , b , c \big\rangle \ , 
    \ \tau'_2 = \big\langle x_k , a , b , c \big\rangle \ , \ \tau'_3 = \big\langle  x_\ell , a , b , c \big\rangle \ \in \ \Sigma_{X'}\]
    and 
    \[ \tau''_1 = \big\langle x_k , x_\ell , a , b  \big\rangle \ , 
    \ \tau''_2 = \big\langle x_k , x_\ell , a ,  c \big\rangle \  \in \ \Sigma_{X''} \ .\]
         
    The resulting toric variety $X''$ is smooth and proper. We denote by $C_i := V_{X'} (b,c) \subset X'$ the center of the flip, and by $Z_i := V_{X''} (x_k,x_\ell,a) \subset X''$ the flipped locus.
\end{setup}

\begin{lemma}
\label{lemma: flipping loci do not intersect}
    Let $X'$ satisfy \cref{assumption: X'}, and follow the notation introduced in \cref{notation: flipping loci}.
    Then $C_i \cap C_j = \varnothing$ for distinct $i$ and $j$, i.e., the centers of the flips are pairwise disjoint.
\end{lemma}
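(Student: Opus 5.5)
We argue by contradiction. The centers are $C_i=V_{X'}(b,c)$ and $C_j=V_{X'}(b',c')$, coming from relevant primitive relations
\[
r(Q_i)\colon x_k+x_\ell+a=b+c,\qquad r(Q_j)\colon x_{k'}+x_{\ell'}+a'=b'+c'.
\]
Since orbit closures meet exactly when their cones span a cone, $C_i\cap C_j\neq\varnothing$ means $\sigma:=\langle b,c,b',c'\rangle\in\Sigma_{X'}$. We assume this. Let $m$ and $m'$ be the indices of the missing $x$'s. Using $x_0+x_1+x_2=0$, the two relations become
\[
a=x_m+b+c,\qquad a'=x_{m'}+b'+c'. \tag{$\ast$}
\]
The basic contradiction we aim for is a cone of $\Sigma_{X'}$ containing both sides of one of these identities. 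This is impossible because the generators of a smooth cone are linearly independent.

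First we record what the right-hand sides can contain. By \cref{remark: PC does not intersect focus} and \cref{lemma: RHS of an RPR is not contained in the centered PC}, none of $b,c,b',c'$ lies in $\{x_0,x_1,x_2\}$. Also $\{b,c\}\cap\{x_k,x_\ell,a\}=\varnothing$, and likewise for $Q_j$.

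The main tool is the contractibility criterion \cref{prop: contractible}, which applies since both relations are contractible by \cref{assumption: X'}. Take $\tau$ to be the cone generated by $\{b',c'\}\setminus\{x_k,x_\ell,a,b,c\}$. It is a face of $\sigma$, so $\langle b,c,\tau\rangle\subseteq\sigma$ is a cone. The criterion for $r(Q_i)$ then gives that
\[
\langle x_k,x_\ell,b,c,\tau\rangle,\quad \langle x_k,a,b,c,\tau\rangle,\quad \langle x_\ell,a,b,c,\tau\rangle
\]
are all cones of $\Sigma_{X'}$. We then feed one of these enlarged cones back into the criterion for $r(Q_j)$, with the new $\tau$ obtained by deleting the vectors of $Q_j\cup\{b',c'\}$.

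\emph{Case of different pairs} $\{k,\ell\}\neq\{k',\ell'\}$. Then $m'\in\{k,\ell\}$. Apply the criterion for $r(Q_j)$ with $\tau$ generated by $\{x_k,x_\ell,b,c\}\setminus\{x_{k'},x_{\ell'},a',b',c'\}$. This $\tau$ contains $x_{m'}$. Dropping one of the $x$'s of $Q_j$, we obtain a cone containing $a',x_{m'},b',c'$. This contradicts $(\ast)$.

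\emph{Case of equal pairs}, so $m=m'$ and $a\neq a'$. Use the cone $\langle x_k,a,b,c,b',c'\rangle$ obtained above. Apply the criterion for $r(Q_j)$ with $\tau$ generated by $\{a,b,c\}\setminus\{x_k,x_\ell,a',b',c'\}$; dropping $x_\ell$ yields a cone containing $x_k,a,a',b,c,b',c'$. Subtracting the two identities in $(\ast)$ gives
\[
a+b'+c'=a'+b+c,
\]
a nontrivial linear relation among generators of one smooth cone. This is a contradiction once the relation does not collapse.

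The main obstacle is the bookkeeping of coincidences among the vectors. The generators $b',c'$ may coincide with $a$, or $a'$ may coincide with $b$ or $c$, or the pairs $\{b,c\}$ and $\{b',c'\}$ may overlap. Each coincidence changes which vectors may be put into $\tau$, and in the equal-pairs case it may cancel terms in the linear relation. I would treat these subcases one by one. When the linear-independence argument degenerates, I would fall back on the other tools: uniqueness of opponents (\cref{assumption: X'}(2)), and the fact that a relation $u+v=2w+t$ with $\langle u,v\rangle$ a cone is impossible (\cref{remark: cannot have relation a+b=2c+d with a b forming a cone}). For instance, $a'=b$ would give $a=x_m+a'+c$ together with $a'=x_m+b'+c'$, which should again force a cone violating $(\ast)$ or create a second opponent for $x_m$.
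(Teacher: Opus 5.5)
\textbf{There is a gap: the coincidences you defer are where your write-up is incomplete.} Your strategy is the paper's. You assume $\langle b,c,b',c'\rangle\in\Sigma_{X'}$, apply \cref{prop: contractible} first to $r(Q_i)$ and then to $r(Q_j)$, obtain one smooth cone containing every vector of a linear relation, and conclude by linear independence. Two steps are not justified as written.
\begin{itemize}
\item In the different-pairs case, the only cone you have produced that contains $x_k,x_\ell,b,c$ is $\langle x_k,x_\ell,b,c,\tau\rangle$, and it omits $a$. If $a\in\{b',c'\}$ (which you have not excluded), that generator was discarded from $\tau$ and is not in this cone. So $\langle b',c',\tau\rangle\in\Sigma_{X'}$, the hypothesis of your second application of the criterion, is not established.
\item In the equal-pairs case you stop at ``once the relation does not collapse'' without checking it.
\end{itemize}
The fallbacks you mention (uniqueness of opponents, \cref{remark: cannot have relation a+b=2c+d with a b forming a cone}) are not needed.

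\textbf{How the paper closes both points.} Any two $2$-subsets of $\{x_0,x_1,x_2\}$ share an element. So write $r(Q_i)\colon x_\ell+x_k+a=b+c$ and $r(Q_j)\colon x_\ell+x_{k'}+a'=b'+c'$, and in the first application drop the \emph{common} vector $x_\ell$.
\begin{itemize}
\item The cone $\langle x_k,a,b,c,\tau\rangle$ then contains $b'$ and $c'$ in every case. Any of them discarded from $\tau$ lies in $\{x_k,x_\ell,a,b,c\}$, hence in $\{a,b,c\}$, since $b',c'\notin\cpc$.
\item Dropping $x_\ell$ again when applying the criterion to $r(Q_j)$ gives a cone containing $x_k,x_{k'},a,a',b,c,b',c'$.
\item Eliminating $x_\ell$ gives $x_k+a+b'+c'=x_{k'}+a'+b+c$. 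By linear independence the multisets $\{x_k,a,b',c'\}$ and $\{x_{k'},a',b,c\}$ must agree.
\item Since $a,a',b,c,b',c'\notin\cpc$, this forces $x_k=x_{k'}$. Then $Q_i\neq Q_j$ gives $a\neq a'$. Also $a\notin\{b,c\}$ by \cref{remark: PC does not intersect focus}.
\item So $a$ has no partner on the right, a contradiction.
\end{itemize}
This covers both of your cases and every coincidence at once; in particular the relation can never collapse. If you prefer to keep your different-pairs contradiction via $a'=x_{m'}+b'+c'$, that also works. Just use the cone obtained by dropping the common $x$, which retains $x_{m'}$ (here $x_{m'}=x_k$), instead of $\langle x_k,x_\ell,b,c,\tau\rangle$.
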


\begin{proof}
    Let us write $r(Q_i) \colon x_\ell + x_k + a = b + c$ and
    $r(Q_j) \colon x_\ell + x_{k'} + a' = b' + c'$ for the relations with flipping centers $C_i$ and $C_j$, respectively.
    In order to prove the claim, we will argue by contradiction and assume that \(C_i \cap C_j \neq \varnothing\), or equivalently, that $\langle b, c, b', c' \rangle$ is a cone in $\Sigma_{X'}$.

    Substitution yields the relation $x_k + a + b' + c' = x_{k'} + a' + b + c$.
    Contractibility of $r(Q_i)$ and the assumption $\langle b, c, b', c' \rangle \in \Sigma_{X'}$ imply that $\langle x_k, a, b, c, b', c' \rangle$ is a cone by \cref{prop: contractible}.
    Now contractibility of $r(Q_j)$ implies that $\langle x_k, x_{k'}, a, b, c, a', b', c' \rangle$ is a cone.
    We conclude that vectors on both sides of the relation $x_k + a + b' + c' = x_{k'} + a' + b + c$ span cones, which is only possible if \(x_k = x_{k'}\) and \(\{a,b',c'\} = \{a',b,c\} \).
    But \(a\neq a'\) because \(Q_i \neq Q_j\), and \(a\neq b,c\) by \cref{remark: PC does not intersect focus}, which leads to a contradiction.
\end{proof}

\begin{remark}
\label{notation: all flips at once}
\label{remark: all flips at once}
In the context of \cref{lemma: flipping loci do not intersect}, recall from \cref{rmk:how the cones differ in a flip}
that the flip of $r(Q_i) \colon x_\ell + x_k + a = b + c$ is given by changing the subdivision of the cone 
    \[ \tau_i \ = \ \big\langle x_k , x_\ell , a , b , c \big\rangle \,  \]
from 
    \[ \tau_i \ = \ \big\langle x_k , x_\ell , b , c \big\rangle \cup \big\langle x_k , a , b , c \big\rangle \cup \big\langle  x_\ell , a , b , c \big\rangle  , \]
where $\langle x_k , x_\ell , b , c \rangle$,  $\langle x_k , a , b , c \rangle$ and $\langle  x_\ell , a , b , c \rangle$ are cones in $\Sigma_{X'}$ sharing the face $\langle b , c \rangle$, into 
    \[ \tau_i \ = \ \big\langle x_k , x_\ell , a , b  \big\rangle \cup \big\langle x_k , x_\ell , a , c  \big\rangle \, . \]
Similarly,  the flip of $r(Q_j) \colon x_\ell + x_{k'} + a' = b' + c'$ is given by changing the subdivision of the cone 
    \[ \tau_j \ = \ \big\langle x_{k'} , x_\ell , a' , b' , c' \big\rangle \,  \]
from 
    \[ \tau_j \ = \ \big\langle x_{k'} , x_\ell , b' , c' \big\rangle \cup \big\langle x_{k'} , a' , b' , c' \big\rangle \cup \big\langle  x_\ell , a' , b' , c' \big\rangle  , \]
where $\langle x_{k'} , x_\ell , b' , c' \rangle$,  $\langle x_{k'} , a' , b' , c' \rangle$ and $\langle  x_\ell , a' , b' , c' \rangle$ are cones in $\Sigma_{X'}$ sharing the face $\langle b' , c' \rangle$, into 
    \[ \tau_j \ = \ \big\langle x_{k'} , x_\ell , a' , b'  \big\rangle \cup \big\langle x_{k'} , x_\ell , a' , c'  \big\rangle \, . \]

In terms of the fan $\Sigma_{X'}$, \cref{lemma: flipping loci do not intersect} implies that the $4$-dimensional cones cones $\tau_i$ and $\tau_j$ can only meet along a proper face. Indeed, $\tau_i$ is a union of three $4$-dimensional cones of $\Sigma_{X'}$ meeting along the face $\langle b , c \rangle$, while $\tau_j$ is a union of three $4$-dimensional cones of $\Sigma_{X'}$ meeting along the face $\langle b' , c' \rangle$. 
Since $\langle b, c, b', c' \rangle\not\in\Sigma_{X'}$ by \cref{lemma: flipping loci do not intersect}, 
$\tau_i\cap \tau_j$ cannot contain a $4$-dimensional cone of $\Sigma_{X'}$.

This shows that we can perform all the flips at once by changing the subdivisions of the $4$-dimensional
cones $\tau_i$ associated to the flip of each $r(Q_i)$, yielding $f \colon X' \dashrightarrow Y$.
We follow the notation in \cref{notation: flipping loci} and set $U' := X' \setminus \bigcup_{i=1}^N C_i$.
If we denote by $Z_i \subset Y$ the flipped locus of $r(Q_i)$, then set-theoretically, we have the following identifications:
    \[
    X' = U' \sqcup \bigsqcup_{i=1}^N C_i
    \quad \text{ and } \quad
    Y = U' \sqcup \bigsqcup_{i=1}^N Z_i,
    \]
    with $\codim_{X'} C_i = 2$ and $\codim_Y Z_i = 3$.
\end{remark}

\begin{proposition}
\label{lemma: no new RPR after all flips}
    Let $X'$ satisfy \cref{assumption: X'}, and let $f \colon X' \dashrightarrow Y$ denote the flipping of all $\RPR(X')$ as in \cref{notation: all flips at once}.
    Then $Y$ is a proper toric manifold such that:
    \begin{enumerate}
        \item $G(\Sigma_Y) = G(\Sigma_{X'})$;
        \item
        \label{item: <x,x,a> is a cone of Y}
        For any $x_i, x_j \in \cpc$ and $a \in G(\Sigma_Y) \setminus \cpc$, the cone $\langle x_i, x_j, a \rangle$ is in the fan $\Sigma_Y$;
        \item
        \label{item: x+x+x=0 remains a PC on Y}
        The centered primitive collection $\cpc$ of $X'$ remains a primitive collection of $Y$.
    \end{enumerate}
    In other words, $\RPC(Y) = \varnothing$.
\end{proposition}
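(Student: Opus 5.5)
The proof will work directly with the explicit description of $\Sigma_Y$ from \cref{notation: all flips at once}. The fan $\Sigma_Y$ agrees with $\Sigma_{X'}$ outside the cones $\tau_i=\langle x_k,x_\ell,a,b,c\rangle$, one for each $Q_i\in\RPC(X')$. Inside each $\tau_i$, the three cones $\langle x_k,x_\ell,b,c\rangle$, $\langle x_k,a,b,c\rangle$, $\langle x_\ell,a,b,c\rangle$ are replaced by $\langle x_k,x_\ell,a,b\rangle$ and $\langle x_k,x_\ell,a,c\rangle$. By \cref{lemma: flipping loci do not intersect} the $\tau_i$ meet only along proper faces, so these local changes are compatible. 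Smoothness and completeness are local, so they follow from the single-flip case in \cref{setup_flips}. Each flip is a blowup followed by a blowdown of the same ray $z$, and hence adds no rays. This gives item (1).

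For item (2), I fix $x_i,x_j\in\cpc$ and $a\in G(\Sigma_Y)\setminus\cpc$, and split according to whether $\langle x_i,x_j,a\rangle\in\Sigma_{X'}$.

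\emph{Case 1: it is not a cone in $\Sigma_{X'}$.} Some subset of $\{x_i,x_j,a\}$ is then a primitive collection of $X'$. This subset must contain $a$, since $\langle x_i,x_j\rangle$ is a cone. So it is either $\{x_i,a\}$ or $\{x_j,a\}$, or the whole triple $\{x_i,x_j,a\}$.
\begin{itemize}
\item A two-element collection of the form $\{x_i,a\}$ would be a relevant primitive collection of type \ref{item: RPC x+a=b}. This is excluded by \cref{assumption: X'}(3). (If some $x_i+a$ were centered, that would contradict $m=2$ minimality; I expect this to be handled by \cref{assumption: X'} or by \cref{lemma: RHS of an RPR is not contained in the centered PC}.)
\item Otherwise $\{x_i,x_j,a\}=Q_s$ for some $s$. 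The flip of $Q_s$ creates the cone $\langle x_i,x_j,a,b\rangle$, and so $\langle x_i,x_j,a\rangle\in\Sigma_Y$.
\end{itemize}

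\emph{Case 2: it is a cone in $\Sigma_{X'}$.} I must check that no flip destroys it. The only cones removed by the flip of $Q_s$ (with relation $x_k+x_\ell+a'=b+c$) are the faces of $\tau_s$ that are not faces of the new subdivision. These are exactly the faces containing $\langle b,c\rangle$. So it suffices to observe that $\{b,c\}\not\subseteq\{x_i,x_j,a\}$, which holds because $b,c\notin\cpc$ by the remark following the list of relevant types.

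Item (3) is the step I expect to be the main obstacle. I need $\langle x_0,x_1,x_2\rangle\notin\Sigma_Y$ and $\langle x_i,x_j\rangle\in\Sigma_Y$. The second follows from item (2), or by the same face argument as in Case 2. For the first: since $x_0+x_1+x_2=0$ and $\Sigma_Y$ is a fan whose support contains $0$ only at the apex, $\langle x_0,x_1,x_2\rangle$ can never be a strongly convex cone. So it is automatically not a cone of $\Sigma_Y$, and $\cpc$ is a centered primitive collection of $Y$.

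Finally, $\RPC(Y)=\varnothing$. By (1)–(3), every proper subset $P'\subsetneq\cpc$ together with any $a\notin\cpc$ spans a cone in $\Sigma_Y$. Hence no set of the form $P'\cup\{a\}$ can be a primitive collection.
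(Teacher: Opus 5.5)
Your proof is correct and follows the paper's argument. The case split is the same: either $\{x_i,x_j,a\}$ is some $Q_s$ and becomes a face of a flipped cone, or it is already a cone of $\Sigma_{X'}$ and survives because $b_s,c_s\notin\cpc$ forces $\{b_s,c_s\}\not\subseteq\{x_i,x_j,a\}$; item (3) is again handled by $x_0+x_1+x_2=0$. You phrase this through which cones the flips remove rather than through orbit closures meeting the common open set $U'$, and your hedge in Case 1 is unnecessary, since \cref{assumption: X'}(3) already excludes any two-element relevant collection, whatever its relation.
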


\begin{proof}
    The fact that $Y$ is smooth and proper follows from the description of the individual flips in \cref{notation: flipping loci}.
    Recall the notation from \cref{remark: all flips at once}: \(U' = X' \setminus \bigsqcup C_i = Y \setminus \bigsqcup Z_i\).
    Since $U'$ is an open toric submanifold of both $X'$ and $Y$, with complements of codimension $\geq 2$, we have $G(\Sigma_Y) = G(\Sigma_{U'}) = G(\Sigma_{X'})$.

    We now turn to proving Part \ref{item: <x,x,a> is a cone of Y}.
    Recall \cref{notation: all flips at once}, and set $n := \dim X' = \dim Y$.
    Then the claim in part \ref{item: <x,x,a> is a cone of Y} is equivalent to showing that $V_Y(x_i) \cap V_Y(x_j) \cap V_Y(a) \neq \varnothing$.
    But this set contains
    \begin{align*}
    V_Y(x_i) \cap V_Y(x_j) \cap V_Y(a) \cap U' = V_{X'}(x_i) \cap V_{X'}(x_j) \cap V_{X'}(a) \cap U'.
    \end{align*}
    \\ \noindent \textbf{Case 1: $\{x_i, x_j, a\}$ is a primitive collection of $X'$.}
    Then $\{x_i, x_j, a\} = Q_\ell \in \RPC(X')$ for some $\ell$, which means that $V_Y(x_i) \cap V_Y(x_j) \cap V_Y(a) = V_Y(x_i, x_j, a) = Z_\ell \neq \varnothing$.
    \\ \noindent \textbf{Case 2: $\{x_i, x_j, a\}$ is not a primitive collection of $X'$.}
    All proper subsets of $\{x_i, x_j, a\}$ span cones by \cref{assumption: X'}, hence $\{x_i, x_j, a\}$ must span a cone of $X'$ in order to avoid being a primitive collection.
    We will now show that $V_{X'}(x_i, x_j, a)$ cannot be a subset of any $C_\ell = V_{X'} (b_\ell, c_\ell)$.
    If it were, we would have $\{ b_\ell, c_\ell \} \subset \{x_i, x_j, a\}$, which is impossible because $b_\ell, c_\ell \notin P = \{ x_0 , x_1 , x_2 \}$ by \cref{lemma: RHS of an RPR is not contained in the centered PC}.
    We then get that $V_{X'}(x_i, x_j, a) \cap U' = V_{X'}(x_i, x_j, a) \setminus \bigsqcup_\ell C_\ell \neq \varnothing$, as desired.

    For Part \ref{item: x+x+x=0 remains a PC on Y}, notice that every pair $\{x_i,x_j\}$ spans a cone of $Y$ by part \ref{item: <x,x,a> is a cone of Y}, but $\cpc = \{x_0, x_1, x_2\}$ cannot span a cone, because $x_0 + x_1 + x_2 = 0$.
\end{proof}

\begin{theorem}
\label{theorem: construct blowdowns and flips}
    Let $X$ be a toric Fano manifold of dimension $n > 2$ and $m(X) = 2$.
    Let $\cpc$ be a centered primitive collection of order 3 of $X$.
    Then there exist toric birational maps $X \to X' \dashrightarrow Y$ such that:
    \begin{itemize}
        \item $\cpc$ is a primitive collection of both $X'$ and $Y$;
        \item $X \to X'$ is a composition of at most 3 blowdowns, and $\RPR(X')$ only contains relations of type \ref{item: RPC x+x+a=b+c};
        \item $X' \dashrightarrow Y$ is a composition of toric flips with disjoint centers;
        \item $Y$ contains an open subset $U \subset Y$ with $\codim (Y \setminus U) \geq 2$ on which $r(\cpc) \colon x_0+x_1+x_2 = 0$ induces a $\PP^2$-bundle $U \to W$.
    \end{itemize}
\end{theorem}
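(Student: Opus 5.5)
The theorem is an assembly of \cref{proposition: doing all blow-downs}, \cref{lemma: flipping loci do not intersect}, \cref{lemma: no new RPR after all flips} and \cref{proposition: Pk bundle}. The plan is to build $X \to X'$ and $X' \dashrightarrow Y$ in turn and then read off the $\PP^2$-bundle on $Y$.

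\textbf{Blowdowns.} Apply \cref{proposition: doing all blow-downs} to $X$ and $\cpc$. This gives $X \to X'$, a composition of at most three smooth blowdowns, with the following properties:
\begin{itemize}
\item[] $X'$ is a proper toric manifold and $\cpc \in \PC(X')$;
\item[] no new opponents appear, so by \cref{prop:one_oponent} every generator of $\Sigma_{X'}$ has at most one opponent;
\item[] every relevant primitive relation of $X'$ is of type \ref{item: RPC x+x+a=b+c} and is contractible.
\end{itemize}
Since $\cpc$ is still primitive and $x_0+x_1+x_2=0$ holds in $N$, the relation stays centered on $X'$. Together these are exactly the three conditions of \cref{assumption: X'}.

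\textbf{Flips.} Under \cref{assumption: X'}, each $Q_i \in \RPC(X')$ determines a flip as in \cref{notation: flipping loci}. By \cref{lemma: flipping loci do not intersect} the centers $C_i$ are pairwise disjoint. By \cref{remark: all flips at once}, the $4$-dimensional cones $\tau_i$ then meet only along proper faces, so all flips can be performed at once by re-subdividing each $\tau_i$ independently. This gives $X' \dashrightarrow Y$, a composition of toric flips with disjoint centers. \cref{lemma: no new RPR after all flips} then shows that $Y$ is a proper toric manifold, that $\cpc \in \PC(Y)$ and is centered since $x_0+x_1+x_2=0$, and that $\RPC(Y)=\varnothing$.

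\textbf{The $\PP^2$-bundle.} Take $U = Y \setminus V_Y(\cE_{\cpc})$. By \cref{proposition: Pk bundle}, $U$ carries a $\PP^2$-bundle structure $U \to W$ induced by $r(\cpc)$. It remains to show $\codim(Y\setminus U) \ge 2$. By the surjection lemma in \cref{section:strategy}, the divisorial components of $V_Y(\cE_{\cpc})$ correspond to relevant primitive collections, and there are none. Directly: a cone $\sigma \in \cE_{\cpc}$ of dimension one would be a ray $\langle a\rangle$ with $\{a\}\cup P'\in\PC(Y)$ for some $P'\subsetneq \cpc$, that is, a relevant primitive collection. Hence every cone in $\cE_{\cpc}$ has dimension at least $2$, and $V_Y(\cE_{\cpc})$ has codimension at least $2$.

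\textbf{Main obstacle.} Essentially all the difficulty sits in the cited results. The one point to verify is that the output of \cref{proposition: doing all blow-downs} meets \cref{assumption: X'} word for word. In particular, the uniqueness of opponents must pass to $X'$, which holds because no new opponents appear, and $\cpc$ must remain a centered primitive relation on $X'$.
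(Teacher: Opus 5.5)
Your proposal is correct and is essentially the paper's proof, which simply combines \cref{proposition: doing all blow-downs}, \cref{lemma: no new RPR after all flips} (with the simultaneous flips justified by \cref{lemma: flipping loci do not intersect} and \cref{remark: all flips at once}) and \cref{proposition: Pk bundle}. Your extra checks make explicit what the paper leaves implicit: that the output of the blowdowns satisfies \cref{assumption: X'} (via \cref{prop:one_oponent} on $X$ together with the absence of new opponents), and that $\RPC(Y)=\varnothing$ forces every cone of $\cE_{\cpc}$ to have dimension at least $2$.
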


\begin{proof}
    This is a combination of \cref{proposition: Pk bundle}, \cref{proposition: doing all blow-downs} and \cref{lemma: no new RPR after all flips}.
\end{proof}

% ---
\section{Step 2: Existence of a required surface}
\label{section:step2}
% ---

In this section, we carry out Step~2 of the strategy outlined in the introduction. Namely, we show that if $Y$ is a proper toric manifold admitting a $\PP^m$-bundle structure on the complement of a closed subset of codimension at least $2$, then there is a surface $S\subset Y$ such that $S \cdot \ch_2(Y) \leq 0$. 

For $m=2$, we showed in \cref{sec:construction maps} that such a toric manifold $Y$ can be obtained from a toric Fano manifold $X$ with $m(X)=2$ via a precisely described sequence of blowdowns and flips $X \dashrightarrow Y$.
Our ultimate goal is to show that the strict transform $\tilde S$ of $S$ in $X$ satisfies $\tilde S \cdot \ch_2(X) \leq 0$. 
In order to compute the relevant intersection numbers in the next section and show that $\tilde S \cdot \ch_2(X) \leq S \cdot \ch_2(Y)$, we need to choose the surface $S\subset Y$ appropriately. 

\begin{lemma}
\label{lemma:existence of curve}
Let $Y$ be a proper toric manifold. Suppose that there exists a closed subset $Z\subset Y$ of codimension at least $2$, a toric manifold $W$, and a toric $\PP^m$-bundle $Y\setminus Z\to W$. 
Then there is a rational curve $C\subset W$ with the property that $C$ intersects transversely the torus-invariant divisors of $W$ and is disjoint from the intersection of any two distinct torus-invariant divisors of $W$. 
Moreover, if $\dim(W)\geq 3$, we can take $C$ to be smooth.
\end{lemma}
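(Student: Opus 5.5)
The plan is to reduce everything to a statement about the toric manifold $W$ and then use a general position argument. First I will establish some properties of $W$. Since $Y\setminus Z\to W$ is a $\PP^m$-bundle and $Y\setminus Z$ is a toric manifold, $W$ is a smooth toric variety. It need not be proper, but its fan $\Sigma_W$ is the image of the part of $\Sigma_Y$ corresponding to $Y\setminus Z$ under the projection $N\to N/\langle \bar x\rangle$. The key observation is that, because $\codim_Y Z\ge 2$, the variety $Y\setminus Z$ contains all torus-invariant divisors of $Y$ (minus a codimension $\ge 2$ subset). Hence $W$ contains the generic point of each of its boundary divisors, and we may replace $W$ by a toric partial compactification, or simply work in a smooth toric compactification $\overline W$ of $W$. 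I will choose $\overline W$ by completing the fan and resolving it; this is possible by toric Chow/equivariant resolution.

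The curve will be a general rational curve in the torus-invariant sense. Concretely, I will take a monomial curve: choose a general point $p\in T_W$ and a cocharacter $u\in N_W$, and consider the closure of $\lambda\mapsto p\cdot\lambda^{u}$ together with its translates. Such an orbit closure is a rational curve which meets the boundary only at the two limit points, corresponding to the cones containing $u$ and $-u$ in their relative interiors. To make these limit points lie on exactly one divisor each, I will choose $u$ so that both $u$ and $-u$ lie in the relative interiors of rays. This is not always possible, so instead I will use a more flexible construction. In a smooth projective toric compactification $\overline W$, take $C$ to be a general complete intersection curve cut out by $\dim W-1$ very ample divisors. By Bertini, $C$ is smooth and irreducible, it meets every torus-invariant divisor transversally, and it avoids all codimension-two strata. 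It then lies in $W$ because $\overline W\setminus W$ has codimension $\ge 2$ after we remove the cones missing from $W$. However, $C$ must also be rational, and complete intersections are not rational in general.

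To ensure rationality, I will use images of lines instead. A toric variety is dominated by rational curves through general points: take a morphism $\PP^1\to\overline W$ given by $t\mapsto (f_1(t),\dots,f_d(t))$ on the torus, where each $f_i$ is a general rational function with simple zeros and poles at distinct general points. The boundary behaviour at a point $t_0$ is then governed by the vector of orders $(\mathrm{ord}_{t_0} f_i)_i\in N_W$. I will choose the $f_i$ so that at each zero or pole exactly one coordinate vanishes to order one in a chosen basis, arranged so that the order vector equals a ray generator $v_\rho$ of $\Sigma_W$. This is done by writing the $f_i$ as products $\prod_j (t-t_j)^{\langle e_i^*, v_{\rho_j}\rangle}$ over chosen distinct points $t_j$, with $\sum_j v_{\rho_j}=0$. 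Such a balanced choice exists because the rays span $N_W$ positively, since the fan of a proper variety is complete. Each special point then maps to the interior of the divisor $D_{\rho_j}$ with local intersection multiplicity $1$, which gives transversality, and it avoids all codimension-two strata. The curve lies in $W$ because $W$ contains the open orbits of all its divisors.

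For smoothness when $\dim W\ge 3$, a general such map $\PP^1\to\overline W$ is an immersion and, when the dimension is at least $3$, an embedding, by a general position argument (translating by a general torus element separates points). The main obstacle is to check carefully that every ray of $\Sigma_W$ actually appears, so that the relation $\sum v_{\rho_j}=0$ with all coefficients positive can be chosen, and that the relevant divisor open orbits indeed lie in $W$. Both follow from the completeness of $\Sigma_Y$ and from $\codim Z\ge 2$.
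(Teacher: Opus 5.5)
Your route differs from the paper's. The paper deforms a very free rational curve on the rationally connected manifold $Y$ off $Z$ and off the codimension-two strata, makes it transverse to the boundary, projects it to $W$, and gets smoothness from \cite[Theorem II.3.14]{kollar96}. You instead write the curve down on $W$ as $f(t)=p\cdot\prod_j(t-t_j)^{w_j}$, with $w_j$ ray generators of $\Sigma_W$ and $\sum_j w_j=0$. The existence and transversality part of this works, with one correction. Positivity of the rays of $\Sigma_W$ cannot come from properness of $W$, which fails. It comes from the following: since $\codim Z\geq 2$, every ray of $\Sigma_Y$ is a ray of $\Sigma_{Y\setminus Z}$; every cone of $\Sigma_{Y\setminus Z}$ maps into a cone of $\Sigma_W$; so the image of the positive span of $G(\Sigma_Y)$, which is all of $N_W\otimes\mathbb{R}$, lies in the positive span of the rays of $\Sigma_W$. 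With this, $f(t_j)\in O(\rho_j)\subset W$ with $\mathrm{ord}_{t_j}f^*D_{\rho_j}=1$, and all other points map to the torus.

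The gap is the smoothness claim. Translation by $p\in T_W$ is an automorphism of $W$, so $p\cdot f(\PP^1)\cong f(\PP^1)$. It can neither separate two branches nor remove a cusp. Whether $f$ is an embedding depends only on the data $(w_j,t_j)$: for instance, a relation $v+v+(-v)+(-v)=0$ gives a double cover of a one-parameter-subgroup orbit closure for every choice of $t_j$ and $p$.

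The genericity must come from moving the points $t_j$, using all rays so that the $w_j$ span $N_W\otimes\mathbb{R}$; write $d=\dim W$ and let $r$ be the number of points $t_j$.
\begin{itemize}
\item \emph{Double points.} For fixed $s\neq t$, the map $(t_j)\mapsto(z_j)$ with $z_j=(s-t_j)/(t-t_j)$ is an open immersion into $(\CC^*)^r$. The condition $f(s)=f(t)$ means $\prod_j z_j^{w_j}=1$, which defines a subgroup of codimension $d$. So the tuples $(t_j)$ admitting a double point in the torus part form a set of dimension $\leq r+2-d<r$ once $d\geq 3$.
\item \emph{Immersion.} $f$ fails to be immersive at $t$ iff $\sum_j w_j/(t-t_j)=0$. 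This is a codimension-$d$ linear condition in $u_j=1/(t-t_j)$, giving a bad locus of dimension $\leq r+1-d$; treat $t=\infty$ the same way.
\item \emph{Repeated rays.} Two special points $t_j\neq t_k$ with $w_j=w_k=v_\rho$ must have distinct images in $O(\rho)$. This is a proper closed condition as soon as not all $w_l$ lie on $\mathbb{R}v_\rho$. You need it for transversality as well: otherwise $C$ has a node on $D_\rho$.
\end{itemize}
Alternatively, since $W$ has no torus factor, $T_W$ is a quotient of $\bigoplus_\rho\cO_W(D_\rho)$. Hence $f^*T_W$ is a quotient of $\bigoplus_\rho\cO_{\PP^1}(n_\rho)$ with all $n_\rho\geq 1$, so $f$ is very free. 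Then \cite[Theorem II.3.14]{kollar96} applies as in the paper, with transversality and avoidance of codimension-two strata being open conditions on deformations.
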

    
\begin{proof}
Set $U := Y \setminus Z$ and denote by $\pi \colon U \to W$ the toric $\mathbb P^m$-bundle.

Note that $Y$ is smooth and rationally connected, and so there is a very free rational curve $C'\subset Y$. Since $Z$ has codimension $\geq 2$ in $Y$, a general deformation of $C'$ does not intersect $Z$ by \cite[Proposition II.3.7]{kollar96}. So we may assume that $C'\subset U$. For the same reason, we may assume that $C'$ is disjoint from the intersection of any two distinct torus-invariant divisors of $U$. 
By \cite[Proposition 2.9]{BLRT22}, $C'$ can also be taken to intersect transversely the torus-invariant divisors of $U$.

Since $C'$ is very free, it cannot be contained in a fiber of $\pi$, and we set $C=\pi(C')\subset W$. 
The pullbacks via $\pi$ of the torus-invariant divisors of $W$ are torus-invariant divisors of $U$. 
Therefore, $C$ intersects transversely the torus-invariant divisors of $W$ and is disjoint from the intersection of any two distinct torus-invariant divisors of $W$. 

From the construction of $C\subset W$, we see that a general deformation of the normalization morphism $\mathbb P^1 \to C\hookrightarrow W$ is very free by \cite[Corollary II.3.10.1]{kollar96}.
It follows from \cite[Theorem II.3.14]{kollar96} that we can take $\mathbb P^1 \to C$ to be an embedding if $\dim(W)\geq 3$.
\end{proof}

\begin{proposition}
\label{proposition: existence of surface with nonpositive intersection}
Let $Y$ be a proper toric manifold with a centered primitive collection $\cpc =\{x_0,\dots,x_m\}$ of order $m+1$, with $1 \leq m < \dim Y$.
Set $U := Y \setminus V(\mathcal E_{\cpc})$ and denote by $\pi \colon U \to W$ the $\mathbb P^m$-bundle induced by $r(\cpc)$. 
Suppose there exists a rational curve $C\subset W$. 
(This holds for instance if $V(\mathcal{E}_{\cpc})$ has codimension $\geq 2$ in $Y$ by the previous lemma.)
Write $\mathbb P^1 \to C$ for the normalization morphism, set $U_C:=\pi^{-1}(C)$ and consider the fiber product diagram:
\[\begin{tikzcd}
	U_{\mathbb P^1} & {U_{C}} & U & Y \\
	{\mathbb P^1} & {C} & W
	\arrow["{\pi'}"', from=1-1, to=2-1]
	\arrow["\nu", curve={height=-15pt}, from=1-1, to=1-4]
	\arrow["\gamma"', curve={height=15pt}, from=2-1, to=2-3]
	\arrow["\pi", from=1-3, to=2-3]
	\arrow[from=2-1, to=2-2]
	\arrow[hook, from=2-2, to=2-3]
	\arrow[from=1-1, to=1-2]
	\arrow[hook, from=1-2, to=1-3]
	\arrow[from=1-2, to=2-2]
    \arrow[hook, from=1-3, to=1-4]
\end{tikzcd}\]
Then  
\begin{enumerate}
\item \label{item: restriction of ch2 from Y to P} $\nu^* \ch_2 (Y) = \ch_2 (U_{\mathbb P^1})$; and
\item \label{item: existence of surface with nonpositive intersection}
for a suitable choice of distinct indices $i,j\in\{0,\dots,m\}$, the surface 
\[S:=U_{C} \cap \bigcap_{k\neq i,j} V(x_k)\subset U\subset Y\] satisfies 
\[ S \cdot \ch_2(Y) \leq 0 \, .\]
\end{enumerate}
\end{proposition}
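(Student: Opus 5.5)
Part \ref{item: restriction of ch2 from Y to P} comes from the tangent sequence of the pullback, and part \ref{item: existence of surface with nonpositive intersection} from an explicit Chern character computation on a toric $\PP^m$-bundle over $\PP^1$.

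For part \ref{item: restriction of ch2 from Y to P}, since $U\subset Y$ is open, $\ch_2(Y)|_U=\ch_2(U)$, so it suffices to compare $\ch_2(U)$ pulled back to $U_{\PP^1}$ with $\ch_2(U_{\PP^1})$. Let $\mu\colon U_{\PP^1}\to U$ be the natural map. We have the relative tangent sequence $0\to T_{U/W}\to T_U\to \pi^*T_W\to 0$. Its pullback is $\mu^*T_U$, with subbundle $\mu^*T_{U/W}=T_{U_{\PP^1}/\PP^1}$ and quotient $\pi'^*\gamma^*T_W$. On the other hand, $T_{U_{\PP^1}}$ has subbundle $T_{U_{\PP^1}/\PP^1}$ and quotient $\pi'^*T_{\PP^1}$. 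Hence
\[
\nu^*\ch(Y)-\ch(U_{\PP^1})=\pi'^*\bigl(\ch(\gamma^*T_W)-\ch(T_{\PP^1})\bigr).
\]
Both Chern characters on the right live on $\PP^1$, where $\ch_2=0$. Pulling back keeps $\ch_2=0$, which gives part \ref{item: restriction of ch2 from Y to P}.

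For part \ref{item: existence of surface with nonpositive intersection}, I use that $U_{\PP^1}$ is a toric $\PP^m$-bundle over $\PP^1$. Since $\pi$ is a toric bundle obtained from the splitting given by $r(\cpc)$, its pullback is $\PP(\cO\oplus\cO(d_1)\oplus\dots\oplus\cO(d_m))$ over $\PP^1$ for some integers $d_k$. The divisors $V(x_k)$ restrict to the toric sections $H_k$ of this bundle, so $S$ is (the image of) $H_{\hat i\hat j}:=\bigcap_{k\ne i,j}H_k$. This is a Hirzebruch surface $\PP(\cO(e_i)\oplus\cO(e_j))$, where the $e_k$ are the twists, normalized up to a common shift.

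For $S\cdot\ch_2(Y)$ I would use part \ref{item: restriction of ch2 from Y to P}. I also need $\nu$ to be birational onto its image, which holds because the normalization is generically injective; then $S\cdot \ch_2(Y)=H_{\hat i\hat j}\cdot\ch_2(U_{\PP^1})$. Using the Euler sequence, $T_{U_{\PP^1}}$ fits into sequences whose Chern character is $\sum_{k=0}^m e^{\xi+e_kf}-1+\pi'^*\ch(T_{\PP^1})$, with $f$ the fiber class. Equivalently, it is the sum over the toric divisors $D_\rho$ of $e^{D_\rho}$ minus a rank correction. Hence $\ch_2=\tfrac12\sum_\rho D_\rho^2$. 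The toric divisors are the $m+1$ sections $H_k$ and two fibers, and the fibers have square $0$. So $\ch_2(U_{\PP^1})=\tfrac12\sum_k H_k^2$.

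Intersecting with $H_{\hat i\hat j}$ reduces to computing $H_k^2\cdot H_{\hat i\hat j}$ in terms of the differences $e_k-e_l$. For $k\ne i,j$, $H_k|_{H_{\hat i\hat j}}$ is a normal-bundle contribution. I expect the total to be a quadratic-type expression such as $-\tfrac12\bigl((e_i-e_j)^2\bigr)$ plus terms of the form $(e_k-e_i)+(e_k-e_j)$ that sum antisymmetrically.

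This choice of $(i,j)$ is the step I expect to be the real obstacle. I would first carry out the computation cleanly. Then I would order the twists $e_0\le\dots\le e_m$ and choose $i,j$ so that the signed linear terms are nonpositive, presumably the two largest or two smallest twists. Failing that, I would sum the expression over all pairs $\{i,j\}$ and show the average is $\le0$, which forces some pair to be $\le 0$.
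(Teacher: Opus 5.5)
Your Part 1 is correct, and it takes a different route from the paper. You compare the two tangent sequences through $T_{U/W}$, which commutes with base change, so the difference of Chern characters is pulled back from $\PP^1$ and has no degree-2 part. The paper instead uses the toric formula $\ch_2=\tfrac12\sum_v V(v)^2$ and the fact that each $V(v)$ with $v\notin S_x$ pulls back to a multiple of a fiber. Your argument has the advantage of not needing the toric structure.

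The gap is in Part 2: the inequality, which is the entire content of that part, is never established. You leave both the intersection computation and the choice of $(i,j)$ open. The shape you predict is also wrong: there is no quadratic term like $-\tfrac12(e_i-e_j)^2$.

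Here is the computation that fills the gap. Write $U_{\PP^1}=\PP(\bigoplus_k \cO(a_k))$ (Grothendieck convention), with $\xi$ the tautological class. Then $[H_k]=\xi-a_kF$. On $S'=H_{\hat i\hat j}\cong\PP(\cO(a_i)\oplus\cO(a_j))$ one has $\xi^2=a_i+a_j$, $\xi F=1$ and $F^2=0$. The contributions $H_i^2\cdot S'=a_j-a_i$ and $H_j^2\cdot S'=a_i-a_j$ cancel. For $k\neq i,j$ one gets $H_k^2\cdot S'=a_i+a_j-2a_k$, so
\[
S'\cdot\ch_2(U_{\PP^1})=\tfrac12\sum_{k\neq i,j}(a_i+a_j-2a_k)=\tfrac{m-1}{2}(a_i+a_j)-\sum_{k\neq i,j}a_k .
\]
The paper reaches the same expression via the relative Euler sequence, $\ch_2=\tfrac{m+1}{2}\xi^2-AF\xi$.

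Now choose $a_i,a_j$ to be the two smallest twists. Then every summand $a_i+a_j-2a_k$ is $\le 0$, which gives the inequality. The two largest twists give the wrong sign, so that option must be discarded. Your averaging fallback also works, because the sum of this expression over all pairs $\{i,j\}$ is identically $0$. With this computation written out, your argument is complete.
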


\begin{proof}
    Consider the curve class $\beta$ defined by $r(\cpc)$.
    We observe that $\beta \cdot V(v) = 0$ if $v  \in G(\Sigma_Y)\setminus \cpc$ and $\beta \cdot V(x_i) = 1$.
    Denote by $F$ the class of a fiber of $\pi'$ in $\Pic(U_{\mathbb P^1})$ and set $D_i := \nu^* V(x_i)$.
    For each $v  \in G(\Sigma_Y)\setminus \cpc$, we have that $\nu^* V(v) = \alpha_v F$ for some $\alpha_v \in \ZZ$. Then we get
    \begin{align*}
    \nu^* \ch_2 (Y)
    &= \frac{1}{2} \sum_{v \in G(\Sigma_Y)} \nu^* \left( V(v)^2 \right)
    = \frac{1}{2} \sum_{v \in G(\Sigma_Y)\setminus \cpc} \alpha_v^2 F^2
    + \frac{1}{2} \left( D_0^2 + \dots + D_m^2 \right)
    \\ &= \frac{1}{2} \left( D_0^2 + \dots + D_m^2 \right).
    \end{align*}
    Since $U \to W$ is a toric projective bundle, it can be written as $U \cong \mathbb P_W (\cU)$ for some split vector bundle $\cU = \cL_0 \oplus \cdots \oplus \cL_m$, and the indexing is chosen in such a way that we identify $V(x_i) = \mathbb P_W(\cU / \cL_i)$
    via the projection $\cU \to \cU / \cL_i$ (we use Grothendieck's convention for projective bundles).

    Let $a_i\in \ZZ$ be such that  $\gamma^* \cL_i \cong \cO_{\mathbb P^1}(a_i)$. Then 
    \[
    U_{\mathbb P^1} \cong \mathbb P_{\mathbb P^1} (\gamma^* \cU)\cong \mathbb P_{\mathbb P^1}\big(\cO_{\mathbb P^1}(a_1) \oplus \dots \oplus \cO_{\mathbb P^1}(a_m)\big).
    \]
    Choosing a fan for $\mathbb P^1$ gives a toric structure on $U_{\mathbb P^1}$, with a centered relation $p_0 + \dots + p_m = 0$ such that $V_{U_{\mathbb P^1}}(p_i)$ is identified with $\mathbb P_{\mathbb P^1} \big(\gamma^*\cU/\cO_{\mathbb P^1}(a_i)\big)$.
    We now note that this identification, together with $\gamma^* \cL_i = \cO_{\mathbb P^1}(a_i)$, yields $D_i = \nu^* \big(V_Y(x_i)\big) = V_{U_{\mathbb P^1}}(p_i)$.
    Note however that $\nu \colon U_{\mathbb P^1} \to Y$ is not a toric morphism in general. In addition, $G(\Sigma_{U_{\mathbb P^1}}) = \{p_0, \dots, p_m, u, u'\}$, with $V_{U_{\mathbb P^1}}(u)$ and $V_{U_{\mathbb P^1}}(u')$ being two torus-invariant fibers. It then follows that
    \begin{align*}
    \nu^* \ch_2 (Y)
    &= \frac{1}{2} \left( D_0^2 + \dots + D_m^2 \right)
    = \frac{1}{2} \left( D_0^2 + \dots + D_m^2 + V(u)^2 + V(u')^2 \right)
    = \ch_2 (U_{\mathbb P^1}),
    \end{align*}
    and so Part \ref{item: restriction of ch2 from Y to P} is proved.
    
    For Part \ref{item: existence of surface with nonpositive intersection}, we start by observing that the relative tangent sequence and the relative Euler sequence imply that
    \begin{align*}
        \ch U_{\PP^1} &=
        \ch \pi'^* \cT_{\PP^1} + \ch \cT_{\pi'}
        \\ &=
        \ch \pi'^* \cT_{\PP^1} + \ch \pi'^*\gamma^* (\cU^\vee) \cdot \ch (\xi) - \ch \cO,
    \end{align*}
    where $\xi$ denotes the tautological relative hyperplane class on $U_{\PP^1}$.
    For the following equalities, we denote by $[*]$ the class of a point on $\PP^1$, by 1 the fundamental class of a manifold,  and by $A$ the sum $A := \sum_{i=0}^m a_i$.
    \begin{align*}
        \ch U_{\PP^1} &=
        \pi'^* (1+2[*]) + \pi'^* \left( \sum_{i=0}^m (1 - a_i [*]) \right) \cdot e^\xi - 1
        \\ &=
        2F + (m+1 - AF) \cdot \left( 1 + \xi + \frac{1}{2} \xi^2 + \cdots \right).
    \end{align*}
    Taking the second graded component of the equality yields
    \[
    \ch_2 U_{\PP^1} = \frac{m+1}{2} \xi^2 - AF \cdot \xi .
    \]
    Without loss of generality, assume $a_0 \leq a_1 \leq \cdots \leq a_m$ 
    and let $S' \cong \PP_{\PP^1} ( \cO_{\PP^1}(a_0) \oplus \cO_{\PP^1}(a_1) )\subset U_{\PP^1}$ be the $\PP^1$-bundle corresponding to the projection 
    \[
        \cO_{\PP^1}(a_0) \oplus \cO_{\PP^1}(a_1) \oplus \dots \oplus \cO_{\PP^1}(a_m)\twoheadrightarrow 
        \cO_{\PP^1}(a_0) \oplus \cO_{\PP^1}(a_1).
    \]    
    Recall the projective bundle formula on the surface $S'$:
    \begin{align*}
        \xi_{|S'}^2 &=
        - \cc_1 \big(\pi'^*\big(\cO_{\PP^1}(-a_0) \oplus \cO_{\PP^1}(-a_1)\big)\big) \cdot \xi_{|S'} - \cc_2 \big(\pi'^* \big(\cO_{\PP^1}(-a_0) \oplus \cO_{\PP^1}(-a_1)\big)\big)
        \\ &=
        (a_0+a_1) F \cdot \xi_{|S'} - 0 =
        a_0 + a_1.
    \end{align*}
    Substitution yields
    \begin{align*}
        S' \cdot \ch_2 U_{\PP^1} =
        \frac{m+1}{2} (\xi_{|S'})^2 - A =
        \frac{m-1}{2} a_0 + \frac{m-1}{2} a_1 - (a_2 + \cdots + a_m)
        \leq 0.
    \end{align*}
    Finally, we define $S := \nu_* (S') = V(x_2) \cap \dots \cap V(x_m)\cap  \pi^{-1}(C)$
    and conclude the proof with the use of the projection formula, Part \ref{item: restriction of ch2 from Y to P} and the last inequality:
    \[
    S \cdot \ch_2(Y) = \nu_* (S' \cdot \nu^* \ch_2 (Y))
    = \nu_* (S' \cdot \ch_2 (U_{\mathbb P^1})) \leq 0.
    \qedhere
    \]
\end{proof}

% ---
\section{Step 3: Chern character computations}
\label{section: Chern character computations}
% ---

Let $X$ be a toric Fano manifold with $m(X)=2$. If $\dim(X)=n\leq 4$, then $m(X)=2\geq n-2$, and there is a complete classification of such toric Fano manifolds, which in particular gives that $X$ is not $2$-Fano except if $X=\PP^n$ with \(n\geq 2\) (\cite[Theorem 1.4 and Corollary 1.5]{team2023}).
So, from now on, we assume that $\dim(X) \geq 5$.
\begin{setup}\label{setup:loci}
Let $X$ be a toric Fano manifold with $m(X)=2$ and  $\dim(X) \geq 5$. Fix a centered primitive collection $\cpc = \{x_0,x_1,x_2\}$ of $X$, with centered primitive relation $r(\cpc)\colon x_0+x_1+x_2=0$. 
Consider a sequence of toric birational maps as constructed in \cref{sec:construction maps}:
\begin{center}
\begin{tikzcd}
    X\coloneqq X^0 \arrow[r, "f_0"] & \ \cdots \ \arrow[r, "f_{L-1}"] &X^L \arrow[r,dashed, "f_L"] & X^{L+1}  \arrow[r,dashed, "f_{L+1}"] & \ \cdots \ \arrow[r,dashed, "f_{k-1}"] &X^k \eqqcolon Y,
\end{tikzcd}    
\end{center}
so that $r(\cpc)$ is still a centered primitive relation of $Y$, and $V_Y(\mathcal{E}_{\cpc})$ has codimension $\geq 2$ in $Y$ (see \cref{theorem: construct blowdowns and flips}).
In particular,
\begin{itemize}
    \item for $0\leq\ell<L$, each $f_\ell\colon X^\ell\to X^{\ell +1}$ is the blowdown induced by a relevant primitive relation of $X$ of the form $x_i+a_i=b_i$. It contracts the divisor $V_{X^\ell}(b_i)\subset X^\ell$ onto $V_{X^{\ell+1}}(x_i,a)\subset X^{\ell+1}$;

    \item for $L\leq \ell<k$, each $f_\ell\colon X^\ell\dashrightarrow X^{\ell +1}$ is the flip corresponding to a relevant primitive relation of the form $x_i + x_j + a_{ij} = b_{ij} + c_{ij}$. It flips $C_\ell\coloneqq V_{X^\ell}(b_{ij},c_{ij})\subset X^\ell$ to $Z_\ell\coloneqq V_{X^{\ell+1}}(x_i,x_j,a_{ij})\subset X^{\ell+1}$. 
\end{itemize}
We abuse notation and denote by the same symbol $Z_\ell$ the strict transform of $Z_\ell\subset X^{\ell+1}$ on any $X^i$ for $i>\ell$, and similarly for the strict transform of $C_\ell\subset X^\ell$ on any $X^i$ for $i\leq\ell$.
\smallskip

By \cref{proposition: doing all blow-downs}, the construction of the sequence of blowdowns $X^0 \to X^L$ depends on whether or not $(X,\cpc)$ is an exceptional case  (\cref{def:exceptional_case}).
If $(X,\cpc)$ is an exceptional case, 
then $L=2$ and, for some choice of $i,j\in \{0,1,2\}$, the blowdowns $f_0 \colon X^0 \to X^1$ and $f_1 \colon X^1 \to X^L$ are associated to relevant primitive relations of the form $x_i+a=b$ and $x_j+c=a$, respectively. The morphism $f_0$ contracts $V_{X^0}(b)\subset X^0$ onto $V_{X^1}(x_i,a)\subset X^1$, while $f_1$ contracts $V_{X^1}(a)\subset X^1$ onto $V_{X^2}(x_j,c)\subset X^2$, hence the composed morphism maps $V_{X^0}(b)\subset X^0$ onto $V_{X^2}(x_i,x_j,c)\subset X^2$:
\[\begin{tikzcd}[row sep=small]
	{X^0} && {X^1} && {X^L} \\
	{V_{X^0}(a)} && {V_{X^1}(a)} && {V_{X^L}(x_j,c)} \\
	{V_{X^0}(b)} && {V_{X^1}(x_i,a)} && {V_{X^L}(x_i,x_j,c).}
	\arrow["{x_i+a=b}", from=1-1, to=1-3]
	\arrow["{x_j+c=a}", from=1-3, to=1-5]
	\arrow[from=2-1, to=2-3]
	\arrow[from=2-3, to=2-5]
	\arrow[from=3-1, to=3-3]
	\arrow[hook, from=3-3, to=2-3]
	\arrow[from=3-3, to=3-5]
	\arrow[hook, from=3-5, to=2-5]
\end{tikzcd}\]
If $(X,\cpc)$ is not an exceptional case, then the number of relevant primitive relations of type $x_i+a_i=b_i$ of $X$ is \(L\leq 3\) by \cref{remark: at most 3 of type x+a=b}, and the $a_i$'s and $b_i$'s are all pairwise distinct.
\smallskip

We make some observations which will be relevant for the computations that follow. 
\begin{enumerate}
    \item The image in $X^L$ of the exceptional locus of $X^0\to X^L$ is a union of subsets of the form $V_{X^L}(x_i,a_i)$, and these are not contained in the centers of the flips. Therefore, their strict transforms in $Y$ are well-defined and also of the form $V_{Y}(x_i,a_i)$. 

    \item Since the centers of the flips are pairwise disjoint in $X^L$, the loci $C_i$'s and $Z_j$'s are pairwise disjoint in every model $X^\ell$ where they are defined.
    
    \item On $X$, there cannot exist simultaneously relevant primitive relations of the form $x_i+a=b$ and $x_i+x_j+a=b'+c'$ (with the same vectors $x_i$ and $a$ appearing on the left hand side) by \cref{remark: when a may repeat}.
\end{enumerate}
\smallskip 

By construction, the set $V_Y(\mathcal{E}_{S_x})\subset Y$ has codimension $\geq 2$. Therefore,
by \cref{proposition: Pk bundle}, there is a toric manifold $W$, with $\dim(W)\geq 3$, and a $\mathbb{P}^2$-bundle structure $\pi \colon U \rightarrow W$, where $U=Y \setminus V_Y(\mathcal{E}_{S_x})$.
By \cref{lemma:existence of curve}, there is a smooth rational curve $C\subset W$ with the property that $C$ intersects transversely the torus-invariant divisors of $W$ and is disjoint from the intersection of any two distinct torus-invariant divisors of $W$.
Set $U_{C}:=\pi^{-1}(C)\subset U$.
By \cref{proposition: existence of surface with nonpositive intersection}, after a possible relabeling of the $x_i$'s, the surface $S \coloneqq U_{C} \cap V(x_2) \subset Y$ satisfies $ \ch_2(Y)\cdot S \leq 0$. 
We set $S^{k}\coloneqq S \subset Y$, and, for $0\leq \ell<k$, we let $S^{\ell} \subset X^{\ell}$ be the strict transform of $S$. 
\end{setup}

\smallskip

Our goal in this section is to compare Chern characters and show that $\ch_2(X^0)\cdot S^0 \leq \ch_2(Y)\cdot S \leq 0$. For this purpose, we analyze the intersection of each $S^\ell$ with the loci $V_{X^\ell}(x_i,a)$ and $Z_q\subset X^\ell$, $q<\ell$.
For any $a\in G(\Sigma_Y)\setminus S_x$, the divisor $V_Y(a)\cap U$ is the pullback via $\pi$ of a torus-invariant divisor of $W$. We abuse notation and denote this divisor by $V_W(a)\subset W$.

\begin{lemma} \label{lemma:intersections}
    Let the notation and assumptions be as in \cref{setup:loci}. Then the following holds.  
    \begin{enumerate}
        \item For every $0\leq \ell \leq k$, the composed map $\pi_\ell\colon S^\ell\dashrightarrow C$ is a morphism whose general fiber is isomorphic to $\PP^1$.
        \item For every $q<L$, consider the blowdown $f_q\colon X^q\to X^{q +1}$ associated to a relevant primitive relation of the form $x_i+a=b$, with $a\in G(\Sigma_Y)$.  If $\ell\geq q+1$, the intersection $V_{X^\ell}(x_i,a)\cap S^\ell$ of the image of the center of $f_q$ in $X^\ell$ with $S^\ell$ is:
        \begin{itemize}
            \item the union of finitely many smooth fibers of $\pi_\ell\colon S^\ell\to C$, one over each point of the intersection $C\cap V_W(a)$, if $i=2$;
            \item the union of finitely many smooth points of fibers of $\pi_\ell\colon S^\ell\to C$, one over each point of the intersection $C\cap V_W(a)$, if $i\neq2$. 
        \end{itemize}
        \item Suppose that $(X,S_x)$ is an exceptional case, hence $L=2$, and the blowdowns $f_0 \colon X^0 \to X^1$ and $f_1 \colon X^1 \to X^L$ are associated to relevant primitive relations of the form $x_i+a=b$ and $x_j+c=a$, respectively.
        
        For $\ell\geq 2$, the intersection $V_{X^\ell}(x_i,x_j,c)\cap S^\ell$ of the image of the center of $f_0$ in $X^\ell$ with $S^\ell$ is:
        \begin{itemize}
            \item the union of finitely many smooth points of fibers of $\pi_\ell\colon S^\ell\to C$, one over each point of the intersection $C\cap V_W(c)$, if $i=2$ or $j=2$;
            \item $\varnothing$, otherwise. 
        \end{itemize}

        The intersection $V_{X^1}(x_i,a)\cap S^1$ of the image of the center of $f_0$ in $X^1$ with $S^1$ is:
        \begin{itemize}
            \item the union of finitely many smooth curves of numerical class $x_j+c=a$, contained in fibers of $\pi_1\colon S^1\to C$, one over each point of the intersection $C\cap V_W(c)$, if $i=2$;
            \item the union of finitely many smooth points of fibers of $\pi_1\colon S^1\to C$, one over each point of the intersection $C\cap V_W(c)$, if $j=2$;
            \item $\varnothing$, otherwise. 
        \end{itemize}
        \item For every $q\geq L$, consider  the flip $f_q\colon X^q \dashrightarrow X^{q+1}$ associated to the relevant primitive relation $x_i+x_j+a=b+c$. If $\ell\geq q+1$, the intersection $Z_q\cap S^\ell$ of the flipped locus of $f_q$ with $S^\ell$ is:
        \begin{itemize}
            \item the union of finitely many smooth points of fibers of $\pi_\ell\colon S^\ell\to C$, one over each point of the intersection $C\cap V_W(a)$, if $i=2$ or $j=2$;
            \item $\varnothing$, otherwise. 
        \end{itemize}       
    \end{enumerate}
\end{lemma}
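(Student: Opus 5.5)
The proof works backwards from $Y$, where everything is explicit, and then moves down the chain $X^k\dashrightarrow\cdots\dashrightarrow X^0$. It uses the fact that each $f_\ell$ is an isomorphism away from a small, explicitly described locus.

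\emph{Description on $Y$.} On $U$ we have $U\cong\PP_W(\cU)$ with $\cU=\cL_0\oplus\cL_1\oplus\cL_2$, and $V_Y(x_2)\cap U=\PP_W(\cL_0\oplus\cL_1)$. So $S=U_C\cap V(x_2)$ is a $\PP^1$-bundle over $C\cong\PP^1$, and $\pi_k$ is a morphism with all fibers $\PP^1$. For $a\notin S_x$, $V_Y(a)\cap U=\pi^{-1}V_W(a)$, and $C$ meets $V_W(a)$ transversally at finitely many points. Consequently:
\begin{itemize}
\item $V_Y(x_2,a)\cap S$ is the union of the fibers $\pi_k^{-1}(p)$ for $p\in C\cap V_W(a)$.
\item For $i\neq 2$, $V_Y(x_i,a)\cap S=\pi_k^{-1}(C\cap V_W(a))\cap V(x_i)\cap V(x_2)$. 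Since $\PP_W(\cL_{i'})$ is a section of $\pi$, this is one point in each such fiber.
\item The same computation gives $V_Y(x_i,x_j,c)\cap S$: one point over each point of $C\cap V_W(c)$ when $2\in\{i,j\}$ (we need $\{i,j\}\cup\{2\}$ of size two), and $\varnothing$ otherwise, since $V(x_0)\cap V(x_1)\cap V(x_2)=\varnothing$.
\end{itemize}
We also need all of $S$ to lie in $U$. To get this, $C$ must avoid $\pi(V_Y(\cE_{S_x})\cap\overline U)$, which should follow from the codimension-$\geq 2$ choice of the very free curve in \cref{lemma:existence of curve}. This settles items 2, 3 (for $\ell=k$) and 4 (for $\ell=k$, with $Z_q=V_Y(x_i,x_j,a)$).

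\emph{Descending through the flips.} For $L\le \ell<k$, the flip $f_\ell$ is an isomorphism outside $C_\ell\subset X^\ell$ and $Z_\ell\subset X^{\ell+1}$. By the $\ell=k$ computation, $Z_\ell\cap S^{\ell+1}$ is finitely many points, and these are smooth points of $S^{\ell+1}$ lying in distinct fibers. I will check that the strict transform $S^\ell$ is isomorphic to the blowup of $S^{\ell+1}$ at these points, or at least to a surface that is isomorphic to $S^{\ell+1}$ away from them, through the common blowup $\tilde X$. I intend to do this locally: near such a point, $S^{\ell+1}$ is transversal to $Z_\ell$ inside $X^{\ell+1}$, the blowup of $Z_\ell$ (a $\PP^2$-bundle exceptional locus) cuts $S^{\ell+1}$ in an exceptional $\PP^1$, and pushing down to $X^\ell$ maps it into a fiber of $C_\ell\to$ point-type locus.

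This is the main obstacle. One has to show that $\pi_\ell$ still extends to a morphism with general fiber $\PP^1$, which is clear because the surfaces agree over a dense open subset of $C$. One also has to check that the loci of item 2 and of earlier flips keep the stated intersection pattern. Because the centers are pairwise disjoint (observation 2) and the loci $V(x_i,a_i)$ avoid the flip centers (observation 1), everything in items 2--4 for smaller indices is unaffected away from $C_\ell$. I will verify disjointness from $C_\ell$ using the fan description: $\langle b,c\rangle$ and $\langle x_i,a\rangle$ do not span a common cone, by the uniqueness-of-opponents arguments of \cref{lemma: flipping loci do not intersect}.

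\emph{Descending through the blowdowns.} For $q<L$, the map $f_q$ is the blowup of $V(x_i,a)$. Two cases arise.
\begin{itemize}
\item If $i\neq 2$, the center meets $S^{q+1}$ in isolated smooth points, transversally, because $S^{q+1}$ and $V(x_i,a)$ are complementary-dimensional torus-invariant intersections meeting transversally, as is evident in the local toric coordinates over $U$. Then $S^q$ is the blowup of $S^{q+1}$ at these points, and the exceptional curves lie in fibers.
\item If $i=2$, the center contains whole fibers of codimension $1$ in $S^{q+1}$. A Cartier divisor center does not change the strict transform, so $S^q\cong S^{q+1}$.
\end{itemize}
In the exceptional case I will perform the two blowdowns explicitly. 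In $X^1$, the locus $V(x_i,a)$ is the preimage of $V(x_i,x_j,c)$, namely the $\PP^1$ with class $x_j+c=a$. This gives the curves in item 3 when $i=2$, points when $j=2$, and $\varnothing$ otherwise, because the point $V(x_i,x_j,c)\cap S^2$ exists exactly when $2\in\{i,j\}$. In all cases $\pi_\ell$ remains a morphism with general fiber $\PP^1$, proving item 1.
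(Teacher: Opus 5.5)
Your outline is the paper's proof. It runs by reverse induction from $Y$, where $S$ is a $\PP^1$-bundle over $C$ and the loci are fibers, section points, or empty. Each flip restricts on $S$ to a blowup at the points of $Z_\ell\cap S^{\ell+1}$ (or an isomorphism). Each blowdown restricts to an isomorphism if $i=2$ and to a point blowup otherwise. The exceptional case is obtained by composing the two blowdowns, and your treatment of it is fine.

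\textbf{Main gap: the bookkeeping for the other loci.} You need to show that the loci in items 2--4 coming from other steps keep the stated shape after a given step, and this is not established.
\begin{itemize}
\item You attribute to observation (1) of \cref{setup:loci} that the loci $V(x_{i'},a')$ avoid the flip centers. That observation only says they are not \emph{contained} in them. Global disjointness, i.e.\ $\langle x_{i'},a',b,c\rangle\notin\Sigma$, is a strictly stronger claim.
\item The claim is true, but it needs an argument beyond what you cite.
  \begin{itemize}
  \item If $x_{i'}\notin\{x_i,x_j\}$: contractibility of $x_i+x_j+a=b+c$ on $X^L$ with $\tau=\langle x_{i'}\rangle$ gives the cone $\langle x_i,x_j,x_{i'},b,c\rangle$, which contains $S_x$.
  \item If $x_{i'}=x_i$: you need \cref{remark: when a may repeat} to get $a'\neq a$. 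You must rule out $a'\in\{b,c\}$. Then you pass to the Fano $X$ and apply contractibility there with $\tau=\langle b',a'\rangle$; this yields a cone containing the primitive collection $\{x_i,a'\}$, a contradiction.
  \end{itemize}
\item For the blowdown steps you give no bookkeeping at all for the earlier centers $V(x_{i'},a')$. Nothing you say gives disjointness of two blowdown centers to fall back on.
\end{itemize}

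The paper avoids all of this by arguing on $S$ itself:
\begin{itemize}
\item $\pi_\ell$ factors through $f_\ell|_{S^\ell}$.
\item The modified points lie over $C\cap V_W(a)$, while $V(x_{i'},a')\cap S$ lies over $C\cap V_W(a')$. For $a'\neq a$ these are disjoint, because $C$ misses $V_W(a)\cap V_W(a')$.
\item For $a'=a$, observation (3) forces one of two things. Either $2\notin\{i,j\}$, so the step is an isomorphism near $S$. Or the two loci sit on the different sections $V(x_2)\cap V(x_{i'})$ and $V(x_2)\cap V(x_{i''})$ of $S\to C$.
\end{itemize}

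\textbf{Smaller points.}
\begin{itemize}
\item $S^{q+1}$ is not torus-invariant. The transversality of $S^{q+1}$ with $V(x_i,a)$, and of $S^{\ell+1}$ with $Z_\ell$, comes from $C$ meeting $V_W(a)$ transversally, not from ``torus-invariant intersections''.
\item ``The surfaces agree over a dense open subset of $C$'' does not make $\pi_\ell$ a morphism, since a rational map from a surface to a curve can have base points. The hedge ``or at least a surface isomorphic to $S^{\ell+1}$ away from them'' is not enough.
\item What you need is this: $\alpha\colon\tilde X\to X^\ell$ maps the strict transform $\tilde S$ (the blowup of $S^{\ell+1}$ at the points) isomorphically onto $S^\ell$. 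This holds because the exceptional curves of $\tilde S$ have class $x_i+x_j+a=z$ and are not contracted by $\alpha$, which only contracts curves of class $b+c=z$. This is how the paper argues in \cref{prop:computation flip}.
\item No extra condition is needed for $S\subset U$. $\pi^{-1}(C)$ is proper, hence closed in $Y$, and $S$ lies in $U$ by definition.
\end{itemize}
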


\begin{proof}
We prove the result by reverse induction on $\ell$. 
For $\ell=k$, by construction, $S=S^k$ is a smooth $\PP^1$-bundle over $C$ whose fibers have numerical class $x_0+x_1+x_2=0$ in $Y$.
The images in $Y=X^k$ of the centers of the blowdowns and the flipped loci are of the form $V_Y(x_i,a)$ and $V_Y(x_i,x_j,a)$. We analyze the intersection of $S$ with these loci:
\[
S \cap V_Y(x_i,a) \  = \
   \begin{cases}
       \text{union of the fibers over } C\cap V_W(a) & \text{if }i=2\\
       \text{union of points, one in each fiber over } C\cap V_W(a) & \text{if }i\neq 2  ,
   \end{cases}
\]
and 
\[
S \cap V_Y(x_i,x_j,a) = 
   \begin{cases}
       \text{union of points, one in each fiber over } C\cap V_W(a) & \text{if }i=2 \text{ or } j=2\\
       \varnothing & \text{otherwise} .
   \end{cases}
\]

Suppose the result holds for $\ell\geq m+1$, and consider the birational map $f_m\colon X^m \dashrightarrow X^{m+1}$.
\smallskip

First we consider the case $m\geq L$, when $f_m$ is the flip associated to a relevant primitive relation of the form $x_i+x_j+a=b+c$.
It flips $C_m= V_{X^m}(b,c)\subset X^m$ to $Z_m= V_{X^{m+1}}(x_i,x_j,a)\subset X^{m+1}$, and is an isomorphism elsewhere. 
The restriction ${f_m}_{|S^m}\colon S^m\to S^{m+1}$ is the blowup of $S^{m+1}$ at the finitely many points in $V_{X^{m+1}}(x_i,x_j,a)\cap S^{m+1}$, and hence the composed map $\pi_m\colon S^m\to C$ is a morphism whose general fiber is isomorphic to $\PP^1$. (Note that, if $i,j\neq 2$, then $V_{X^{m+1}}(x_i,x_j,a)\cap S^{m+1}=\varnothing$ and ${f_m}_{|S^m}\colon S^m\to S^{m+1}$ is an isomorphism.)
The union of the exceptional curves of the blowup ${f_m}_{|S^m}\colon S^m\to S^{m+1}$ is precisely $V_{X^m}(b,c)\cap S^m$.
By observation (2) made in \cref{setup:loci}, this locus is disjoint from the other flipped loci. In particular, 
\begin{enumerate}[label=(\MakeUppercase{\roman*})]
    \item for $L\leq q<m$, $Z_q\cap S^m=(f_m)^{-1}(Z_q\cap S^{m+1})$, and the desired description of the intersection $Z_q\cap S^m$ follows from the induction hypothesis.
    \item For $0\leq q<L$, let $f_q:X^q\to X^{q+1}$ be the blowdown associated to a relevant primitive relation of the form $x_{i'}+a'=b'$, and assume that $a\in G(\Sigma_Y)=G(\Sigma_{X^m})$. By observation (3) made in \cref{setup:loci}, if $i'=2$ and $a'=a$, then $i,j\neq 2$, and ${f_m}_{|S^m}\colon S^m\to S^{m+1}$ is an isomorphism. In any case, $V_{X^m}(x_{i'},a')\cap S^m = (f_m)^{-1}(V_{X^m}(x_{i'},a')\cap S^{m+1})$, and the desired description of the intersection $V_{X^m}(x_{i'},a')\cap S^m$ follows from the induction hypothesis. 
    \item For $0\leq q<L$, let $f_q:X^q\to X^{q+1}$ be the blowdown associated to a relevant primitive relation of the form $x_{i'}+a'=b'$, and assume that $a\notin G(\Sigma_Y)$. Then $(X,S_x)$ is an exceptional case, hence $q=0$, $L=2$, and the blowdowns $f_0 \colon X^0 \to X^1$ and $f_1 \colon X^1 \to X^L$ are associated to relevant primitive relations of the form $x_{i'}+a'=b'$ and $x_{j'}+c'=a'$, respectively. By case (II) above, the intersection $V_{X^m}(x_{j'},c')\cap S^m$ is the union of finitely many smooth fibers, respectively smooth points of fibers, of $\pi_m \colon S_m \to C$, one over each point of the intersection $C \cap V_W(c')$ if $j'=2$, respectively $j' \neq 2$. The desired description of the intersection $V_{X^m}(x_{i'},x_{j'},c')\cap S^m$ follows by intersecting $V_{X^m}(x_{j'},c')\cap S^m$ with $V_{X^m}(x_{i'})$.
\end{enumerate}

Now consider the case $0\leq m<L$, when $f_m\colon X^m\to X^{m+1}$ is the blowdown associated to a relevant primitive relation of the form $x_{i}+a=b$. It contracts $V_{X^m}(b)\subset X^m$ to $V_{X^{m+1}}(x_i, a)\subset X^{m+1}$.
If $i=2$, then ${f_m}_{|S^m}\colon S^m\to S^{m+1}$ is an isomorphism. Otherwise, ${f_m}_{|S^m}\colon S^m\to S^{m+1}$ is the blowup of $S^{m+1}$ at the finitely many points in $V_{X^{m+1}}(x_i,a)\cap S^{m+1}$. In any case, the composed map $\pi_m\colon S^m\to C$ is a morphism whose general fiber is isomorphic to $\PP^1$. 
For $0\leq q<m$, $f_q\colon X^q\to X^{q+1}$ is the blowdown associated to a relevant primitive relation of the form $x_{i'}+a'=b'$, with $i\neq i'$ and $a\neq a'$. If $a'\in G(\Sigma_Y)$, then, as in case (II), $V_{X^m}(x_{i'},a')\cap S^m = (f_m)^{-1}(V_{X^m}(x_{i'},a')\cap S^{m+1})$, and the desired description of the intersection $V_{X^m}(x_{i'},a')\cap S^m$ follows from the induction hypothesis. If instead $a'\notin G(\Sigma_Y)$, then $(X,S_x)$ is an exceptional case, hence $q=0$, $L=2$, $m=1$, and $a'=b$. There are three possible cases. For $i=2$, ${f_1}_{|S^1}$ is an isomorphism, hence the desired description of the intersection $V_{X^1}(x_{i'},b) \cap S^1= (f_1)^{-1}(V_{X^2}(x_{i'},x_i,a) \cap S^2)$ follows from case (II).
For $i'=2$, ${f_1}_{|S^1}$ is the blowup of $S^{2}$ at the finitely many points in $V_{X^{2}}(x_i,a)\cap S^{2}$, hence the intersection $V_{X^1}(x_{i'},b) \cap S^1= \Exc({f_1}_{|S^1})$ is the union of finitely many smooth curves of numerical class $x_i+a=b$ by \cref{rem:class contracted curve}, one over each point in $V_{X^{2}}(x_i,a)\cap S^{2}$, hence one over each point of the intersection $C \cap V_W(a)$ by (II).
Finally, for $\{i,i'\}=\{0,1\}$, the intersection $V_{X^1}(x_{i'},b) \cap S^1$ is empty as its image via ${f_1}_{|S^1}$ is $V_{X^2}(x_{i'},x_i,a) \cap S^2 = \varnothing$.
\end{proof}

Our next goal is to compare the intersection numbers $\ch_2(X^\ell) \cdot S^\ell$ and $\ch_2(X^{\ell-1}) \cdot S^{\ell-1}$.
For this purpose, we will make use of the following lemma. It is stated and proved in \cite{dJS06} for projective varieties \(X\) and \(Y\), but their proof extends verbatim to the proper case.

\begin{lemma}(\cite[Lemma 5.1]{dJS06})
\label{Lem:ch_blowup}
Consider the blowup diagram
\begin{center}
\begin{tikzcd}
E \arrow [r, hook, "j"] \arrow[d, "\sigma\coloneqq f_{|E}"] & X \coloneqq \text{Bl}_Z Y \arrow[d, "f"]\\
Z \arrow[r, hook] & Y
\end{tikzcd}
\end{center}
where both $Y$ and $Z$ are smooth proper varieties and $\codim_Y Z = c\geq 2$. 
Then we have the following relation between the 2nd Chern characters of $X$ and $Y$:
$$ \ch_2 (X) =
f^* \ch_2 (Y) + \frac{c+1}{2} E^2 - j_*\sigma^* c_1(\mathcal{N}_{Z/Y})
.$$
\end{lemma}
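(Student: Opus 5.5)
Since the proof of \cite[Lemma 5.1]{dJS06} is purely Chern-theoretic, the plan is to check that each ingredient works for smooth proper (not necessarily projective) varieties. The formula should come from the standard comparison of tangent bundles under a blowup. There is the exact sequence
\[
0 \to \cT_X \to f^*\cT_Y \to j_*\mathcal{Q} \to 0,
\]
where $\mathcal{Q} := \sigma^*\mathcal{N}_{Z/Y}/\cO_E(-1)$ is the universal quotient bundle on $E = \PP(\mathcal{N}_{Z/Y})$ and $\cO_E(-1) = \mathcal{N}_{E/X}$ is the tautological subbundle. This sequence comes from $df$ and the identification of $\mathcal{N}_{E/X}$ with the tautological line subbundle, so it is local on $Y$ and needs no projectivity. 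By additivity of the Chern character,
\[
\ch(X) = f^*\ch(Y) - \ch(j_*\mathcal{Q}).
\]
It then remains to compute $\ch_2(j_*\mathcal{Q})$.

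For that I will use Grothendieck--Riemann--Roch for the closed embedding $j$ of codimension one:
\[
\ch(j_*\mathcal{Q}) = j_*\big(\ch(\mathcal{Q})\,\mathrm{td}(\mathcal{N}_{E/X})^{-1}\big).
\]
For a closed embedding of smooth varieties this holds in the operational Chow ring of proper smooth varieties (Fulton, Intersection Theory, Ch.~15), which requires no projectivity. Alternatively, one can use the exact sequence $0\to\cO_X(-E)\otimes\cdots$ resolving $j_*\mathcal{Q}$ locally. Writing $e := c_1(\cO_E(-1)) = E|_E$, we have
\[
\mathrm{td}(\mathcal{N}_{E/X})^{-1} = 1 - \tfrac{e}{2} + \cdots.
\]
Since $j_*$ raises degree by one, only the degree $\le 1$ part on $E$ matters:
\[
\ch_2(j_*\mathcal{Q}) = j_*\big(c_1(\mathcal{Q}) - \tfrac{c-1}{2}\,e\big).
\]
Moreover
\[
c_1(\mathcal{Q}) = \sigma^*c_1(\mathcal{N}_{Z/Y}) - e,
\]
and $j_*e = E^2$. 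This gives
\[
\ch_2(j_*\mathcal{Q}) = j_*\sigma^*c_1(\mathcal{N}_{Z/Y}) - E^2 - \tfrac{c-1}{2}E^2 = j_*\sigma^*c_1(\mathcal{N}_{Z/Y}) - \tfrac{c+1}{2}E^2.
\]
Substituting into $\ch_2(X) = f^*\ch_2(Y) - \ch_2(j_*\mathcal{Q})$ yields exactly the stated formula.

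The one point needing care is the validity of GRR and the intersection calculus on proper nonprojective varieties. I will justify it as follows. Both $X$ and $Y$ are smooth, so Chow groups carry ring structures and Chern classes act by the operational calculus. GRR for the lci morphism $j$ holds for arbitrary schemes in Fulton's framework (Theorem 15.2 holds for proper morphisms of nonsingular varieties; projectivity is not required after Fulton's Chow lemma argument in \S18.3). This is the main point to verify. I expect no further obstacles, since everything else is local or formal.
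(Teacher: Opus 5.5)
Your proof is correct. It is the standard argument, the tangent sequence $0\to \cT_X\to f^*\cT_Y\to j_*\mathcal{Q}\to 0$ combined with Grothendieck--Riemann--Roch for the divisorial embedding $j$, which the paper does not reprove but imports from \cite{dJS06}, asserting that it carries over verbatim to the proper case. Your Chow-lemma discussion is more than you need: $j$ is a closed embedding (hence projective), and the tangent sequence is itself a global two-term locally free resolution of $j_*\mathcal{Q}$, so projectivity of $X$ or $Y$ never enters.
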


\begin{proposition}\label{prop:computation_blowup}
In the setting of Lemma \ref{Lem:ch_blowup},
suppose that \begin{tikzcd}
    X^{\ell-1}\arrow[r,"f_{\ell-1}"] &X^\ell 
\end{tikzcd} is the blowdown associated to the relevant primitive relation $x_i+a=b$, that is, $X^{\ell-1}\coloneqq\mathrm{Bl}_{Z}X^\ell$ where $Z\coloneqq V_{X^\ell}(x_i,a) \subset X^\ell$, and assume that $a\in G(\Sigma_Y)$.
Then 
\[
\ch_2(X^{\ell-1}) \cdot S^{\ell-1} \ = \
   \ch_2(X^\ell) \cdot S^\ell -
   \begin{cases}
       m & \text{if }i=2\\
       \frac{3}{2}m  &  \text{if }i\neq 2 \ ,
   \end{cases}
\]
where $m$ denotes the the number of points of the intersection $C \cap V_W(a)$.
\end{proposition}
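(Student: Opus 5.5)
The plan is to apply \cref{Lem:ch_blowup} to $f \coloneqq f_{\ell-1}\colon X^{\ell-1}=\mathrm{Bl}_Z X^\ell \to X^\ell$, with $Z=V_{X^\ell}(x_i,a)$ of codimension $c=2$ and exceptional divisor $E=V_{X^{\ell-1}}(b)$. We then intersect both sides with $S^{\ell-1}$. Since $S^{\ell-1}$ is the strict transform of $S^\ell$, we have $f_*S^{\ell-1}=S^\ell$. The projection formula then gives $f^*\ch_2(X^\ell)\cdot S^{\ell-1}=\ch_2(X^\ell)\cdot S^\ell$. It therefore remains to compute
\[
\tfrac32\, E^2\cdot S^{\ell-1} \;-\; j_*\sigma^*c_1(\mathcal N_{Z/X^\ell})\cdot S^{\ell-1}.
\]
For both terms we use the description of $Z\cap S^\ell$ from \cref{lemma:intersections}(2). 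We also use the fact that $C$ meets the toric divisors of $W$ transversally, so that $S^\ell$ meets $Z$ as cleanly as possible. We note that $\mathcal N_{Z/X^\ell}=\cO(V(x_i))|_Z\oplus\cO(V(a))|_Z$.

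\textbf{Case $i\neq 2$.} Here $Z\cap S^\ell$ consists of $m$ points, and $S^\ell$ meets $Z$ transversally there (dimension count plus the transversality of $C$ with $V_W(a)$). Hence $f|_{S^{\ell-1}}$ is the blowup of $S^\ell$ at these $m$ points. Moreover, $E\cap S^{\ell-1}$ is the union of the $m$ exceptional curves $e_p$, each equal to a full fiber $\sigma^{-1}(p)\cong\PP^1$, and $E|_{S^{\ell-1}}=\sum e_p$. Then $E^2\cdot S^{\ell-1}=(\sum e_p)^2=-m$. Also $j_*\sigma^*c_1(\mathcal N)\cdot S^{\ell-1}=\sum_p \sigma^*c_1(\mathcal N)\cdot e_p=0$, because each $e_p$ is contracted by $\sigma$. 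This gives the correction $-\tfrac32 m$.

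\textbf{Case $i=2$.} Since $S\subset V(x_2)$, the locus $Z\cap S^\ell$ is the union of the $m$ fibers $F_p$ of $\pi_\ell$ over $C\cap V_W(a)$, and $f|_{S^{\ell-1}}$ is an isomorphism. Then $E|_{S^{\ell-1}}=\sum G_p$, where $G_p\cong F_p$ is the section of $\PP(\mathcal N|_{F_p})\to F_p$ given by the normal direction of $S^\ell$ along $Z$; multiplicity one follows from transversality of $C$ with $V_W(a)$. As $S^\ell\subset V(x_2)$, this normal direction is $\mathcal N_{Z/V(x_2)}|_{F_p}=\cO(V(a))|_{F_p}$, which has degree $V(a)\cdot F_p=0$ because $F_p$ has class $r(\cpc)$. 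Hence $E^2\cdot S^{\ell-1}=\sum_p \deg(\cO_E(E)|_{G_p})=\sum_p\big(-\deg\cO(V(a))|_{F_p}\big)=0$. For the other term, $\sigma^*c_1(\mathcal N)\cdot G_p=(V(x_2)+V(a))\cdot F_p=1+0=1$, using that $r(\cpc)\cdot V(x_j)=1$ and $r(\cpc)\cdot V(a)=0$. So the correction is $-m$.

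The main obstacle is making the local intersection theory rigorous. Concretely, this means justifying that $E|_{S^{\ell-1}}$ is exactly $\sum e_p$ (resp.\ $\sum G_p$) with multiplicity one. In the case $i=2$ it also means identifying $\cO_E(E)|_{G_p}$ with the dual of the tautological subline given by the normal direction of $S$, including the sign convention. I would first settle these in local toric coordinates near a point of $Z\cap S^\ell$, using the transversality properties of $C$ from \cref{lemma:intersections}.
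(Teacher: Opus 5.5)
Your proposal is correct and follows the paper's proof. You apply \cref{Lem:ch_blowup} with $c=2$, handle the pullback term by the projection formula, and split into two cases with the same numbers as the paper: for $i\neq 2$, $S^{\ell-1}$ is the blowup of $S^\ell$ at $m$ points, so $E^2\cdot S^{\ell-1}=-m$ and the normal-bundle term vanishes; for $i=2$, $S^{\ell-1}\cong S^\ell$, so $E^2\cdot S^{\ell-1}=0$ and the normal-bundle term is $(V(x_2)+V(a))\cdot F_p=1$ per fiber.

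The only difference is how you get $E^2\cdot S^{\ell-1}=0$ when $i=2$. You go through $\cO_E(E)|_{G_p}$; with the standard convention its degree is actually $+\deg \cO(V(a))|_{F_p}$, not minus, which is harmless since that degree is $0$. The paper instead just notes that $E|_{S^{\ell-1}}$ is a sum of fibers of the ruling $S^{\ell-1}\to C$, each of self-intersection $0$.
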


\begin{proof}
By \cref{Lem:ch_blowup}, 
\begin{align*}
    \ch_2(X^{\ell-1}) \cdot S^{\ell-1} 
    & = f_{\ell-1}^* \ch_2 (X^\ell) \cdot S^{\ell-1} + \frac{c+1}{2} E^2 \cdot S^{\ell-1}  - j_*\sigma^* c_1(\mathcal{N}_{Z/{X^\ell}}) \cdot S^{\ell-1}  \\
    & = \ch_2 (X^\ell) \cdot  S^\ell + \tfrac{3}{2} \,E^2 \cdot S^{\ell-1} - c_1(\mathcal{N}_{Z/{X^\ell}})\cdot (S^{\ell} \cap Z).
\end{align*} 

If $i=2$, then, by \cref{lemma:intersections}, $S^\ell \cap Z$ is the union of finitely many smooth fibers of $\pi_\ell\colon S^\ell\to C$, one over each of the $m$ points in the intersection $C\cap V_W(a)$. Each of these fibers has class $x_0+x_1+x_2=0$ and self-intersection $0$. It follows that $(f^{\ell-1})_{|S^{\ell-1}}$ is an isomorphism, hence $E^2 \cdot S^{\ell-1}=0$, and  $$c_1(\mathcal{N}_{Z/{X^\ell}})\cdot (S^{\ell} \cap Z)=\mathrm{deg}((\mathcal{O}_{X^\ell}(V(x_2)) \oplus \mathcal{O}_{X^\ell}(V(a))|_{S^\ell \cap Z})=m.$$ 
We conclude that $\ch_2(X^{\ell-1}) \cdot S^{\ell-1} =\ch_2(X^\ell) \cdot S^\ell -m$.

Suppose now that $i \neq 2$. By \cref{lemma:intersections}, $S^\ell \cap Z$ is
the union of finitely many smooth points of fibers of $\pi_\ell\colon S^\ell\to C$, one over each of the $m$ points in the intersection $C\cap V_W(a)$.  It follows that $(f^{\ell-1})_{|S^{\ell-1}}: S^{\ell-1} \to S^\ell$ is the blowup at the $m$ points of $S^\ell \cap Z$. Hence, $E^2 \cdot S^{\ell-1}=-m$ and $c_1(\mathcal{N}_{Z/{X^\ell}})\cdot (S^{\ell} \cap Z)=0$. This shows that $\ch_2(X^{\ell-1}) \cdot S^{\ell-1} = \ch_2(X^\ell) \cdot S^\ell - \frac{3}{2}m$.
\end{proof}

\begin{proposition}\label{prop:computation_blowup_exc}
Suppose that $(X,S_x)$ is an exceptional case, hence $L=2$. Consider the blowdowns $f_0 \colon X^0 \to X^1$ and $f_1 \colon X^1 \to X^L$ associated to the relevant primitive relations $x_i+a=b$ and $x_j+c=a$, respectively.
Then 
\[
\ch_2(X^{0}) \cdot S^{0} \ = \
   \ch_2(X^L) \cdot S^L -
   \begin{cases}
       \frac{5}{2}m & \text{if }j=2\\
       \frac{1}{2}m  &  \text{if }i=2 \\
       \frac{3}{2}m  & \text{otherwise },
   \end{cases}
\]
where $m$ denotes the number of points in the intersection $C \cap V_W(c)$.
\end{proposition}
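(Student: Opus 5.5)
The plan is to apply \cref{Lem:ch_blowup} twice, once for each blowdown, and to read off every intersection number from the explicit description of the surfaces $S^2$, $S^1$, $S^0$ in \cref{lemma:intersections}(3). I would write the composite as $X^0=\mathrm{Bl}_{V_{X^1}(x_i,a)}X^1$ and $X^1=\mathrm{Bl}_{V_{X^2}(x_j,c)}X^2$. Both centers have codimension $2$, so the coefficient $\tfrac{c+1}{2}$ in \cref{Lem:ch_blowup} equals $\tfrac32$ in both steps. I would then compute
\[
\ch_2(X^{0})\cdot S^0-\ch_2(X^2)\cdot S^2=\big(\ch_2(X^0)\cdot S^0-\ch_2(X^1)\cdot S^1\big)+\big(\ch_2(X^1)\cdot S^1-\ch_2(X^2)\cdot S^2\big)
\]
one bracket at a time. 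Since $i\neq j$, the three cases $j=2$, $i=2$ and $\{i,j\}=\{0,1\}$ are exactly the cases to treat.

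\textbf{Second blowdown $f_1$.} Here $c\in G(\Sigma_Y)$, and the center $V_{X^2}(x_j,c)$ meets $S^2$ as described in \cref{lemma:intersections}(2). The computation is therefore word for word that of \cref{prop:computation_blowup}; the only change is that the relation $x_j+c=a$ lives on $X^1$ rather than on $X$, which the proof does not use. This step contributes $-m$ if $j=2$ and $-\tfrac32 m$ if $j\neq 2$. In the first case $S^1\cong S^2$. In the second, $S^1\to S^2$ is the blowup of $m$ points, and the exceptional curves $D_1,\dots,D_m$ have class $x_j+c=a$ (\cref{rem:class contracted curve}).

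\textbf{First blowdown $f_0$.} I would use the three subcases of \cref{lemma:intersections}(3) for $V_{X^1}(x_i,a)\cap S^1$, and evaluate each term of \cref{Lem:ch_blowup} as follows.
\begin{enumerate}
\item If $j=2$, the intersection consists of $m$ smooth points. Then $S^0\to S^1$ blows up these points, $E^2\cdot S^0=-m$, and the normal bundle term vanishes. This step contributes $-\tfrac32m$, and together with $f_1$ the total is $-\tfrac52 m$.
\item If $\{i,j\}=\{0,1\}$, the intersection is empty. Then $S^0\cong S^1$ and this step contributes $0$, so the total is $-\tfrac32m$.
\item If $i=2$, the intersection is the union of the curves $D_k$. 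These are Cartier divisors on $S^1$, so $S^0\to S^1$ is an isomorphism and $E|_{S^0}=\sum D_k$. The two correction terms are then $\tfrac32\sum D_k^2$ and $-\sum_k\deg\big(\cO(V(x_2))\oplus\cO(V(a))\big)|_{D_k}$. The degrees come from the relation $x_j+c=a$: one has $V(x_2)\cdot D_k=0$ because $j\neq 2$, and $V(a)\cdot D_k=-1$.
\end{enumerate}

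The main obstacle is case 3, the only place where a codimension-$2$ center meets the surface along curves rather than points. There I must get right both $E^2\cdot S^0$, that is, the self-intersection of $E|_{S^0}$ inside $S^0$, and the degree of $c_1(\mathcal N_{Z/X^1})$ on the curves $D_k$. These must be combined with the $-\tfrac32m$ coming from $f_1$ so as to produce the stated $-\tfrac12m$.

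A naive reading gives $D_k^2=-1$ in $S^1$, so that step $f_0$ contributes $-\tfrac32m+m$. Combined with $-\tfrac32m$ from $f_1$, this gives a total of $-2m$, which does not match $-\tfrac12m$. So before writing the proof I would carefully recheck three things. First, the scheme-theoretic restriction $E|_{S^0}$. Second, the sign conventions for $V(a)\cdot D_k$. Third, whether the strict transform $S^0$ meets $E$ transversally, or possibly in the excess way that appears when $S^1\cap Z$ is a divisor on $S^1$. I expect the correct bookkeeping of this excess intersection to be the crux of the proof.
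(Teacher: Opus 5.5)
Your route is the paper's own: two applications of \cref{Lem:ch_blowup}, with \cref{prop:computation_blowup} handling $f_1$ and \cref{lemma:intersections}(3) giving the intersection pattern for $f_0$. In the cases $j=2$ and $\{i,j\}=\{0,1\}$ your bookkeeping matches the paper's and gives $-\tfrac52m$ and $-\tfrac32m$.

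The only trouble is the case $i=2$, and the fault there lies in the statement, not in your computation. Your ``naive'' total of $-2m$ is correct, and no excess-intersection correction will turn it into $-\tfrac12m$. Here is why:
\begin{itemize}
\item $S^0$ is the strict transform of $S\subset V_Y(x_2)$, so $S^0\subset V_{X^0}(x_2)$. Since $\{x_2,a\}\in\PC(X^0)$, this forces $V_{X^0}(a)\cap S^0=\varnothing$.
\item Restricting $f_0^*V_{X^1}(a)=V_{X^0}(a)+E$ to $S^0$ then gives $E|_{S^0}=f_0^*\big(V_{X^1}(a)|_{S^1}\big)$. This is the pullback of $\sum_k D_k$ under the isomorphism $S^0\cong S^1$.
\item Hence $E^2\cdot S^0=\sum_k D_k^2=-m$, because the $D_k$ are the disjoint $(-1)$-curves of $S^1\to S^2$.
\item Combined with $c_1(\mathcal N_{Z/X^1})\cdot(S^1\cap Z)=-m$, the step $f_0$ contributes $-\tfrac32m+m=-\tfrac12m$. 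Adding the $-\tfrac32m$ from $f_1$ gives a total of $-2m$.
\end{itemize}
You can confirm this without \cref{Lem:ch_blowup}, by applying $\ch_2=\tfrac12\sum_v V(v)^2$ to the composite $X^0\to X^2$. The squared restrictions of $V(x_j)$, $V(c)$ and $V(x_2)$ each drop by $m$. The new divisor $V(b)$ restricts to $\sum_k\tilde D_k$, which has square $-m$. The divisor $V(a)$ restricts trivially. Halving the total change of $-4m$ gives $-2m$.

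The paper's proof gets $-\tfrac12m$ by asserting $E^2\cdot S^0=0$ because $(f_0)|_{S^0}$ is an isomorphism. That inference does not hold. In \cref{prop:computation_blowup} the vanishing came from the curves of $S^\ell\cap Z$ being fibers of self-intersection $0$. Here those curves are $(-1)$-curves, so $E|_{S^0}$ is a nonzero divisor of negative square even though $S^0\to S^1$ is an isomorphism. The case $i=2$ of the proposition should therefore read $2m$ rather than $\tfrac12m$. You should state and prove that value instead of looking for a compensating term that does not exist. The error does not affect the main result, since the concluding corollary of \cref{section: Chern character computations} only needs each correction term to be nonnegative, and $2m\ge 0$.
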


\begin{proof}
By \cref{prop:computation_blowup}, 
\[
\ch_2(X^{1}) \cdot S^{1} \ = \
   \ch_2(X^L) \cdot S^L -
   \begin{cases}
       m & \text{if }j=2\\
       \frac{3}{2}m  &  \text{if }j\neq 2 .
   \end{cases}
\]
In the setting of Lemma \ref{Lem:ch_blowup}, set $Z \coloneqq V_{X^1}(x_i,a)$. 
If $\{i,j\}=\{0,1\}$, then $S^1$ is disjoint from $Z$ by \cref{lemma:intersections}, and by \cref{Lem:ch_blowup} we have the equality 
\[
\ch_2(X^{0}) \cdot S^{0} = \ch_2(X^1) \cdot S^1 = \ch_2(X^L) \cdot S^L - \tfrac{3}{2}m.
\]

If $i=2$ then, by \cref{lemma:intersections}, $S^1 \cap Z$ is the union of $m$ smooth curves of numerical class $x_j+c=a$, contained in fibers of $\pi_1\colon S^1\to C$, one over each point of the intersection $C\cap V_W(c)$. 
It follows that $(f_{0})_{|S^{0}}$ is an isomorphism, hence $E^2 \cdot S^{0}=0$, and $$c_1(\mathcal{N}_{Z/{X^1}})\cdot (S^{1} \cap Z)=\mathrm{deg}\Big(\big(\mathcal{O}_{X^1}(V(x_2)) \oplus \mathcal{O}_{X^1}(V(a))\big)\big|_{S^1 \cap Z}\Big)=-m.$$ 
By \cref{Lem:ch_blowup}, we conclude that 
\[\ch_2(X^{0}) \cdot S^{0} =\ch_2(X^1) \cdot S^1 +m =\ch_2(X^L) \cdot S^L -\tfrac{1}{2}m.
\]

If $j=2$ then, by \cref{lemma:intersections}, $S^1 \cap Z$ is the union of $m$ smooth points of fibers of $\pi_1\colon S^1\to C$, one over each point of the intersection $C\cap V_W(c)$.
It follows that $(f_{0})_{|S^{0}}$ is the blowup at the $m$ points of $S^1 \cap Z$. Hence, $E^2 \cdot S^{0}=-m$ and $c_1(\mathcal{N}_{Z/{X^1}})\cdot (S^{1} \cap Z)=0$. This shows that 
\[\ch_2(X^{0}) \cdot S^{0} =\ch_2(X^1) \cdot S^1 - \tfrac{3}{2}m =\ch_2(X^L) \cdot S^L -\tfrac{5}{2}m.
\]
\end{proof}

\begin{proposition}\label{prop:computation flip}
 Let $f_{\ell-1}:X^{\ell-1} \dashrightarrow X^\ell$ be the flip corresponding to the relevant primitive relation $x_i+x_j+a=b+c$. Then
$$
\ch_2(X^{\ell-1}) \cdot S^{\ell-1} = \ch_2(X^\ell) \cdot S^\ell -
\begin{cases}
       \frac{5}{2}m & \text{if }i=2 \text{ or }j=2\\
       0  &  \text{otherwise, }
   \end{cases}
$$
where $m$ denotes the number of points of the intersection $C \cap V_W(a)$.
\end{proposition}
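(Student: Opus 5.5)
Since $f_{\ell-1}$ is an isomorphism away from $C_{\ell-1}=V_{X^{\ell-1}}(b,c)$ and $Z_{\ell-1}=V_{X^\ell}(x_i,x_j,a)$, I would resolve it by the common blowup $\tilde X$ of \cref{setup_flips}: $g\colon \tilde X\to X^{\ell-1}$ is the blowup of $C_{\ell-1}$ (codimension $2$, exceptional divisor $E$), and $h\colon\tilde X\to X^\ell$ is the blowup of $Z_{\ell-1}$ (codimension $3$, with the same exceptional divisor $E$, since both contractions contract the divisor $V_{\tilde X}(z)$). Let $\tilde S\subset\tilde X$ be the strict transform of $S^\ell$. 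Applying \cref{Lem:ch_blowup} twice expresses $\ch_2(\tilde X)\cdot\tilde S$ in two ways. Subtracting the two expressions gives
\[
\ch_2(X^{\ell-1})\cdot S^{\ell-1}-\ch_2(X^\ell)\cdot S^\ell = \tfrac{4}{2}E^2\cdot\tilde S - \tfrac32 E^2\cdot \tilde S - c_1(\mathcal N_{Z_{\ell-1}/X^\ell})\cdot(S^\ell\cap Z_{\ell-1}) + c_1(\mathcal N_{C_{\ell-1}/X^{\ell-1}})\cdot(S^{\ell-1}\cap C_{\ell-1}),
\]
where the pushforward terms are evaluated by the projection formula.

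If $i,j\neq 2$, then \cref{lemma:intersections}(4) gives $S^\ell\cap Z_{\ell-1}=\varnothing$. Hence $\tilde S$ misses $E$, the flip restricts to an isomorphism near $S$, and all the correction terms vanish. The difference is therefore $0$.

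If $i=2$ or $j=2$, \cref{lemma:intersections}(4) says $S^\ell\cap Z_{\ell-1}$ consists of $m$ smooth points of $S^\ell$ (one over each point of $C\cap V_W(a)$), and the intersection is transverse. Then $h|_{\tilde S}$ is the blowup of $S^\ell$ at these $m$ points, so $E|_{\tilde S}$ is a disjoint union of $m$ $(-1)$-curves and $E^2\cdot\tilde S=-m$. The point terms give $c_1(\mathcal N_{Z/X^\ell})\cdot(\text{points})=0$. On the other side, $g|_{\tilde S}\colon\tilde S\to S^{\ell-1}$ is an isomorphism, since $\tilde S\cap E$ consists of curves mapping isomorphically onto $S^{\ell-1}\cap C_{\ell-1}$. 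This matches the description in the proof of \cref{lemma:intersections}, where $S^{m}\to S^{m+1}$ is the blowup with exceptional curves $V(b,c)\cap S^m$. So $S^{\ell-1}\cap C_{\ell-1}$ is a union of $m$ curves $\Gamma_1,\dots,\Gamma_m$, each a $(-1)$-curve on $S^{\ell-1}$, and $E^2\cdot\tilde S$ computed on the $g$-side is also $-m$. The remaining term is $\sum_t\deg(\mathcal N_{C/X^{\ell-1}}|_{\Gamma_t})=\sum_t (V(b)+V(c))\cdot\Gamma_t$. I will compute it using the fact that $\Gamma_t$ is a fibre line of $g$ over a point of $Z$, i.e.\ a line in the $\PP^2$ fibre of $V(b,c)\to$ its image. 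Its class is the flipped curve with relation $b+c=x_i+x_j+a$ read backwards; the key point is that the curve contracted in $X^{\ell-1}$ has class $-r(Q)$. Thus $V(b)\cdot\Gamma_t=V(c)\cdot\Gamma_t=-1$, so each $\Gamma_t$ contributes $-2$, and the term equals $-2m$.

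Assembling: $2(-m)-\tfrac32(-m)-0+(-2m)=-\tfrac52 m$, as claimed. The main obstacle is bookkeeping rather than ideas. I must keep the correct signs in \cref{Lem:ch_blowup}, use $c=2$ versus $c=3$ on the two sides, and justify that $\Gamma_t$ has class $-r(Q)$ (the flipped-away curves in $X^{\ell-1}$ are positive for $V(b),V(c)$ with the opposite sign). I will also check that $E^2\cdot\tilde S$ is the same number computed via $g$ or $h$, since it is one intersection on $\tilde X$. If the sign of $(V(b)+V(c))\cdot\Gamma_t$ came out positive, the total would be $+\tfrac32 m$. So I would double-check this sign against the normal bundle $\mathcal N_{\Gamma/S^{\ell-1}}=\cO(-1)$ inside $\mathcal N_{\Gamma/X^{\ell-1}}$, using that the $\PP^2$-fibres of $g$ over $Z$ lie in $C_{\ell-1}$ with normal bundle $\cO(-1)^{2}$ restricted to them.
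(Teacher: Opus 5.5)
Your proposal is correct and follows essentially the paper's proof. You resolve the flip through the common blowup and apply \cref{Lem:ch_blowup} on both sides: on the codimension-$3$ side with $E^2\cdot\tilde S=-m$ and a vanishing normal-bundle term, and on the codimension-$2$ side with normal-bundle term $-2m$, which combine to give $-\tfrac52 m$.

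There are two small wording slips that do not affect the computation. First, the curves $\Gamma_t$ are images of lines in the fibres of $h$ over points of $Z$, not of $g$, and they are lines in the $\PP^2$-fibres of $C_{\ell-1}$. Second, they have class $r(Q)$ itself, i.e.\ $x_i+x_j+a=b+c$, not $-r(Q)$. It is precisely this class that gives the values $V(b)\cdot\Gamma_t=V(c)\cdot\Gamma_t=-1$, which you use correctly.
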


\begin{proof} The map $f_{\ell-1}$ is the composition
\[\begin{tikzcd}
	& \tilde{X^\ell} \\
	{X^{\ell-1}} && X^\ell
	\arrow["{\alpha:\, b+c=z}"'{pos=0.5}, from=1-2, to=2-1]
	\arrow["{\beta:\, x_i+x_j+a=z}"{pos=0.5}, from=1-2, to=2-3]
	\arrow["{f_{\ell-1}}", dashed, from=2-1, to=2-3]
\end{tikzcd}\]
of the blowup $\alpha$ of $V_{X^{\ell-1}}(b,c)$ with exceptional divisor $\bar{E}\coloneqq V(z)$, and the blowdown $\beta$, which contracts $\bar{E}$ to $Z:=V_{X^\ell}(x_i,x_j,a)$.
We denote by $\tilde{S}$ the strict transform of $S^\ell$ in $\tilde{X^\ell}$.

If $\{i,j\}=\{0,1\}$, then $S^\ell$ is disjoint from $Z$ by \cref{lemma:intersections}, $f_{\ell-1}:X^{\ell-1} \dashrightarrow X^\ell$ is an isomorphism in a neighborhood of $S^{\ell-1}$, and hence $\ch_2(X^{\ell-1}) \cdot S^{\ell-1} = \ch_2(X^\ell) \cdot S^\ell$. 

Assume that $2\in \{i,j\}$. By \cref{lemma:intersections}, $S^\ell \cap Z$ is the union of finitely many smooth points of fibers of $\pi_\ell\colon S^\ell\to C$, one over each point of the intersection $C\cap V_W(a)$.
Then $\tilde{S}$ is isomorphic to the blowup of the surface $S^\ell$ at the $m$ points of $S^\ell \cap Z$.
It follows that $\bar{E} \cap \tilde{S}$ is a union of $m$ curves, each of self-intersection $-1$.
By \cref{Lem:ch_blowup}, 
\begin{align*}
    \ch_2(\tilde{X^\ell}) \cdot \tilde{S} 
    & = \beta^* \ch_2 (X^\ell) \cdot \tilde{S} + \frac{c+1}{2} \bar{E}^2 \cdot \tilde{S}  - j_*\sigma^* c_1(\mathcal{N}_{Z/{X^\ell}}) \cdot \tilde{S}  \\
    & =  \ch_2 (X^\ell) \cdot  S^\ell + 2 \,\bar{E}^2 \cdot \tilde{S} -\cancel{c_1(\mathcal{N}_{Z/{X^\ell}})\cdot (S^\ell \cap Z)}\\
    & =  \ch_2 (X^\ell) \cdot  S^\ell -2m.
\end{align*}

Next, consider the contraction $\alpha \colon \tilde{X^\ell} \to X^{\ell-1}$ associated to the class $b+c=z$, which maps $\bar{E}$ to $V_{X^{\ell-1}}(b,c)$. Here $Z':=V_{X^{\ell-1}}(b,c) \subset X^{\ell-1}$ and hence $\mathrm{codim}_{X^{\ell-1}}(Z')=2$.
Moreover, by \cref{rem:class contracted curve}, $\bar{E} \cap \tilde{S}$ consists of $m$ curves of class $x_i+x_j+a=z$, which are not contracted by $\alpha$. Thus $\alpha$ maps $\tilde{S}$ isomorphically onto $S^{\ell-1}$.
Therefore, the intersection $Z' \cap S^{\ell-1}=\alpha(\bar{E} \cap \tilde{S})$ consists of $m$ curves of class $x_i+x_j+a=b+c$,
since $V(v)=\alpha^*V(v)$ for $v\in\{x_i, x_j, a\}$ and $V(w)=\alpha^*V(w)-V(z)$ for $w\in\{b,c\}$.
So:
$$c_1(\mathcal{N}_{Z'/{X^{\ell-1}}})\cdot (S^{\ell-1} \cap Z')=\mathrm{deg}\Big(\big(\mathcal{O}_{X^{\ell-1}}(V(b)) \oplus \mathcal{O}_{X^{\ell-1}}(V(c))\big)\big|_{S^{\ell-1} \cap Z'}\Big)=-2m.$$ 
By Lemma \ref{Lem:ch_blowup}, 
    \begin{align*}
    \ch_2(\tilde{X^\ell}) \cdot \tilde{S} 
    & = \alpha^* \ch_2 (X^{\ell-1}) \cdot \tilde{S} + \frac{c+1}{2} \bar{E}^2 \cdot \tilde{S}  - j_*\sigma^* c_1(\mathcal{N}_{Z'/X^{\ell-1}}) \cdot \tilde{S}  \\
    & =  \ch_2 (X^{\ell-1}) \cdot  S^{\ell-1} + \tfrac{3}{2} \,\bar{E}^2 \cdot \tilde{S} -c_1(\mathcal{N}_{Z'/X^{\ell-1}}) \cdot (S^{\ell-1} \cap Z')\\
    & =  \ch_2 (X^{\ell-1}) \cdot  S^{\ell-1} -\tfrac{3m}{2}-(-2m)=\ch_2 (X^{\ell-1}) \cdot  S^{\ell-1}+\tfrac{m}{2}.
\end{align*}
Combining the above, we get
\[
    \ch_2 (X^{\ell-1}) \cdot  S^{\ell-1}
    =\ch_2(\tilde{X^\ell}) \cdot \tilde{S}-\tfrac{m}{2} =\ch_2(Y) \cdot S^\ell -\tfrac{5}{2}m.
    \qedhere
\]
\end{proof}

\begin{corollary}
    Let the notation and assumptions be as in \cref{setup:loci}. Then the surface $S^0$ satisfies the inequality $\ch_2(X)\cdot S^0 \leq \ch_2(Y)\cdot S \leq 0$, and in particular $X$ is not $2$-Fano.
\end{corollary}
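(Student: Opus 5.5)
The plan is to telescope the step-by-step comparisons along the sequence $X = X^0 \to \cdots \to X^L \dashrightarrow \cdots \dashrightarrow X^k = Y$ of \cref{setup:loci}. For each $1\le\ell\le k$ we show that $\ch_2(X^{\ell-1})\cdot S^{\ell-1} \le \ch_2(X^\ell)\cdot S^\ell$. Summing these gives $\ch_2(X)\cdot S^0 \le \ch_2(Y)\cdot S$. The second inequality $\ch_2(Y)\cdot S\le 0$ is exactly \cref{proposition: existence of surface with nonpositive intersection}, applied with the labeling fixed in \cref{setup:loci}. Since $S^0$ is an irreducible surface in $X$ (the strict transform of $S$), this shows $X$ is not $2$-Fano.

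We first handle the flips $f_\ell$ with $L\le \ell<k$. Here \cref{prop:computation flip} gives
\[
\ch_2(X^{\ell})\cdot S^{\ell} = \ch_2(X^{\ell+1})\cdot S^{\ell+1} - \varepsilon_\ell ,
\]
where $\varepsilon_\ell\in\{0,\tfrac52 m_\ell\}$ and $m_\ell=\#(C\cap V_W(a))\ge 0$. So each flip step contributes a nonpositive correction. Note that \cref{lemma:intersections}, which feeds into that proposition, is proved by reverse induction on $\ell$, so it applies uniformly to every $S^\ell$. In particular the surfaces $S^\ell$ are well-defined strict transforms on which these computations make sense.

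Next we treat the blowdowns $f_\ell$ with $0\le\ell<L$, splitting according to whether $(X,\cpc)$ is exceptional. If it is not, every blowdown is associated to some $x_i+a_i=b_i$ with $a_i\in G(\Sigma_Y)$, since the $a_i$ and $b_i$ are pairwise distinct, so no $a_i$ is contracted. Then \cref{prop:computation_blowup} applies to each step and gives corrections $-m$ or $-\tfrac32 m$, again nonpositive. If it is exceptional, then $L=2$ and the two blowdowns are treated together by \cref{prop:computation_blowup_exc}, which gives $\ch_2(X^0)\cdot S^0=\ch_2(X^2)\cdot S^2-\delta$ with $\delta\in\{\tfrac52m,\tfrac12m,\tfrac32m\}\ge 0$.

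The main point needing care is that the hypotheses of each comparison proposition are met at the relevant stage. In particular, in the non-exceptional case we must have $a\in G(\Sigma_Y)$, and the strict transforms $S^\ell$ must have the intersection behavior recorded in \cref{lemma:intersections}. Both follow from the observations in \cref{setup:loci}. Once these hypotheses are checked, chaining all the equalities, every one with a nonnegative subtracted term, gives $\ch_2(X)\cdot S^0\le\ch_2(Y)\cdot S\le0$, as required.
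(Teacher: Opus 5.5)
Your proposal is correct and follows the paper's proof. You telescope along the sequence, using \cref{prop:computation flip} for the flips, \cref{prop:computation_blowup} for each blowdown in the non-exceptional case, \cref{prop:computation_blowup_exc} for the pair of blowdowns in the exceptional case, and \cref{proposition: existence of surface with nonpositive intersection} for $\ch_2(Y)\cdot S\le 0$. One wording fix: your opening claim that every single step satisfies $\ch_2(X^{\ell-1})\cdot S^{\ell-1}\le \ch_2(X^\ell)\cdot S^\ell$ can fail in the exceptional case when $f_0$ comes from $x_2+a=b$, since then $\ch_2(X^0)\cdot S^0=\ch_2(X^1)\cdot S^1+m$. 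Only the combined two-blowdown comparison is monotone there, and that is what your later paragraph actually uses.
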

\begin{proof}
    If $(X,S_x)$ is not exceptional, apply \cref{prop:computation_blowup} to each blowdown $f_\ell$ for $0 \leq \ell < L$, and \cref{prop:computation flip} to each flip $f_\ell$ for $L \leq \ell < k$. If $(X,S_x)$ is exceptional, apply \cref{prop:computation_blowup_exc}, and then \cref{prop:computation flip} to each flip $f_\ell$ for $L \leq \ell < k$. In both cases, we obtain the desired inequality. 
\end{proof}

% ---
\appendix
% ---

% ---
\section{Examples of exceptional cases}
\label{subsec: examples of exceptional cases}
% ---

In this appendix, we provide explicit examples of exceptional cases (\cref{def:exceptional_case}), which were treated separately in \cref{sec:construction maps} and \cref{section: Chern character computations}.
Exceptional cases do not appear among 4-folds, so we start by discussing three examples of toric 5-folds which illustrate the exceptional case; there may be more. We will take $X$ to be the toric 5-fold with Macaulay ID (5,~550), (5,~659) or (5,~708).
All three examples have Picard rank $5$ and $8$ primitive relations, and we fix notation for the generators:
\begin{equation}
    G(\Sigma_X) = \{x_0, x_1, x_2, a, b, c, y_1, y_2, u, v \},
\end{equation}
and for the primitive relations:
\[
    \PR(X) = \{ s_x, s_0, s_1, s_2, s_y, r_1, r_2, r_3 \}.
\]
All primitive relations, except for $r_1$, are identical for the three examples:
\begin{align*}
    s_x &\colon x_0+x_1+x_2=0, \\
    s_0 &\colon x_0+c=a, \\
    s_1 &\colon x_1+a=b, \\
    s_2 &\colon x_2+b=c, \\
    s_y &\colon c+y_1+y_2=0, \\
    r_2 &\colon b+y_1+y_2=x_0+x_1, \\
    r_3 &\colon a+y_1+y_2=x_0.
\end{align*}

Recall from \cref{proposition: doing all blow-downs} the construction of a sequence of two blowdowns $X \to X'$ so that $X'$ does not have relevant primitive relations of type \ref{item: RPC x+a=b} with respect to $s_x$. This morphism depends on the choice of a primitive relation among $s_0$, $s_1$ and $s_2$ to blowdown first.  
\Cref{table: examples of exceptional case} summarizes the differences between the three examples.
We can observe that $X'$ may be non-Fano for Macaulay ID (5,~550): the blowdown corresponding to $x_2+b=c$ will have primitive relation $u+v = x_2+b$ of degree $0$ coming from $r_1$.
However, $X'$ will always be Fano for Macaulay IDs (5,~659) and (5,~708).

\begin{table}[h]
    \centering
    \begin{tabular}{|c||c|c|c|}
    \hline
        Macaulay ID & (5,~550) & (5,~659) & (5,~708) \\
    \hline \hline 
        $r_1$ & $u+v=c$ & $u+v=y_2$ & $u+v=x_2$ \\
    \hline
        $X'$ & Not always Fano & Fano & Fano \\
    \hline
    \end{tabular}
    \caption{Differences between the three examples.}
    \label{table: examples of exceptional case}
\end{table}

For completeness of the description, we provide the list of extremal primitive relations, and express the remaining ones as sums of the extremal ones.
\begin{align*}
    \text{Extremal PRs} &\colon s_0, s_1, s_2, r_1, r_2; \\
    s_x &= s_0 + s_1 + s_2, \\
    s_y &= s_0 + s_1 + r_2, \\
    r_3 &= s_1 + r_2.
\end{align*}

In all three examples, there is another centered primitive relation $s_y$ such that $(X,s_y)$ is not an exceptional case.
On the other hand, in dimension $6$ there are examples of exceptional cases that only possess one centered primitive relation. So, it is necessary to consider the exceptional case separately.
As an illustration, we describe three exceptional examples of toric Fano 6-folds explicitly, namely, those with Macaulay IDs (6,~276), (6,~333), and (6,~338).
In all three cases, the corresponding variety \(X\) has Picard rank \(5\) and \(8\) primitive relations.
To describe them, we fix the following notation for the generators and the primitive relations:
\begin{align*}
    G(\Sigma_X) &= \{ x_0,x_1,x_2, a,b,c, y_0,y_1,y_2, z_1,z_2\},
    \\ \PR(X) &= \{ s, s_0, s_1, s_2, r_1,r_2,r_3, q\}.
\end{align*}
The following primitive relations are identical for all three examples:
\begin{align*}
    s &\colon x_0+x_1+x_2=0, \\
    s_0 &\colon x_0+c=a, \\
    s_1 &\colon x_1+a=b, \\
    s_2 &\colon x_2+b=c. 
\end{align*}
\cref{table: examples of exceptional case with a single centered PR} summarizes the remaining primitive relations.

\begin{table}[h]
    \centering
    \begin{tabular}{|c||c|c|c|}
    \hline
        Macaulay ID & (6,~276) & (6,~333) & (6,~338) \\
    \hline \hline 
        $r_1 \ : \ y_0+y_1+y_2+c=$ & $x_1+2x_2$ & $2x_1+x_2$ & $x_1+2x_2$ \\
        $r_2 \ : \ y_0+y_1+y_2+b=$ & $x_1+x_2$ & $2x_1$ & $x_1+x_2$ \\
        $r_3 \ : \ y_0+y_1+y_2+a=$ & $x_2$ & $x_1$ & $x_2$ \\
        $q \ : \ z_1+z_2=$ & $x_2$ & $x_2$ & $x_1$ \\
    \hline
    \end{tabular}
    \caption{Differences between the three examples.}
    \label{table: examples of exceptional case with a single centered PR}
\end{table}
In all three examples, we have the following description of extremal primitive relations:
\begin{align*}
    \text{Extremal PRs} &\colon s_0, s_1, s_2, r_1, q; \\
    s &= s_0 + s_1 + s_2, \\
    r_2 &= r_1 + s_2, \\
    r_3 &= r_1 + s_1 + s_2.
\end{align*}

Applying \cref{proposition: doing all blow-downs} to any of the three examples of $X$ yields a toric Fano manifold that is globally a \(\PP^2\)-bundle, regardless of the choice of primitive relation to blowdown first.

% ---
\section{Complications that arise in the \texorpdfstring{\(m(X) = 3\)}{m(X)=3} case}
\label{section: complications for m=3}
% ---

One may wonder if the program outlined at the beginning of \cref{section:strategy} may work just as well for \(m(X) \geq 3\).
In this appendix, we discuss the next case \(m(X) = 3\), indicating which parts of the proof of Step 1 extend to this setting, providing examples, highlighting the new complications that arise, and outlining possible approaches to address them. As we shall see, a significantly more detailed case-by-case analysis is required.

The new difficulties that arise in the case \(m(X) = 3\) can be summarized as follows. 
\begin{itemize}
\item There are more types of relevant primitive relations (\cref{table: RPR for m=3}), several of which are not manifestedly contractible. 
\item There are more exceptional cases to be treated separately (\cref{more_exceptional cases}).
\item The birational transformations associated to relevant primitive relations may lead to singular varieties (\cref{ex:singular_blowup} and \cref{ex:singular_flip}), with the theory of primitive relations being more involved in the singular setting \cite{CoxvonRenesse2009}.
\item In order to avoid singular varieties, if at all possible, it may be necessary to contract relations that are not relevant.
The existence of such auxiliary relations and the behavior of relevant primitive relations with respect to the associated contractions should be investigated.  (See \cref{ex:singular_blowup} and \cref{ex:singular_flip}.)
\end{itemize}
In what follows, we discuss these difficulties through examples and summarize partial progress in the case \(m(X) = 3\) without going into too much detail.
Throughout this appendix, let \(X\) be a toric Fano manifold with \(m(X) = 3\) and a fixed centered primitive relation \(x_0+x_1+x_2+x_3 = 0\).

As before, we start by classifying all possible relevant primitive relations.
They are displayed in \cref{table: RPR for m=3}, together with some of their properties.

\begin{table}[h]
\begin{tabular}{|c|l|l|l|}
\hline
\textbf{Type} & \begin{tabular}[c]{@{}l@{}} \textbf{Relevant primitive} \\ \textbf{relation} \end{tabular} & \textbf{Provable properties} & \begin{tabular}[c]{@{}l@{}} \textbf{Appears in} \\ \textbf{Macaulay2 ID} \end{tabular} \\ \hline
1    & \( x_i + a = b \)                     & Extremal; \(b \neq x_j\)                                                                    & (5, 539)                     \\ \hline
2    & \( x_i + x_j + a = b + c \)           & Extremal                                                                                                       & \begin{tabular}[c]{@{}l@{}} (6, 2266), (6, 2268): \\ with \(c=x_k\) \end{tabular}     \\ \hline
3    & \( x_i + x_j + a = 2b \)              & \begin{tabular}[c]{@{}l@{}} Extremal; \(b \neq x_k\); \\ appears with an auxiliary \\ extremal PR \(a+b = t\)    \end{tabular}                                                              & \begin{tabular}[c]{@{}l@{}} (6, 2170), (6, 2264), \\(6, 2272) \end{tabular}              \\ \hline
4    & \( x_i + x_j + a = b \)               & \begin{tabular}[c]{@{}l@{}} \(b\neq x_k\); \\ appears with either \\ (1) RPR \(x_k+x_\ell+b=a\), or \\ (2) RPRs \(x_k+b=c\), \(x_\ell+c=a\) \end{tabular} & \begin{tabular}[c]{@{}l@{}} (6, 2170), (6, 2264), \\(6, 2272) \end{tabular}              \\ \hline
5    & \( x_i + x_j + x_k + a = b + c + d \) & Extremal; \(b,c,d \neq x_\ell\)                                                                                                       & (5, 772), (6, 4348)                     \\ \hline
6    & \( x_i + x_j + x_k + a = 2b + c \)    & \begin{tabular}[c]{@{}l@{}} Extremal; \(b,c \neq x_\ell\)  \end{tabular}                                                                                & (6, 2268)                   \\ \hline
7    & \( x_i + x_j + x_k + a = 3b \)        & Cannot occur                                                                                                 & --                      \\ \hline
8    & \( x_i + x_j + x_k + a = b + c \)     & \begin{tabular}[c]{@{}l@{}} \(b,c\neq x_\ell\); \\ if there are no type 1,\\ appears with PR \(x_\ell + b + c = a\)\end{tabular} & (6, 2268)                    \\ \hline
9    & \( x_i + x_j + x_k + a = 2b \)        & Cannot occur                                                                                                 & --                      \\ \hline
10   & \( x_i + x_j + x_k + a = b \)         & \begin{tabular}[c]{@{}l@{}} \(b\neq x_\ell\); \\appears with \(x_\ell+b=a\) \end{tabular} & (5, 539), (6, 4348)                     \\ \hline
\end{tabular}
\caption{RPR for toric Fano manifolds with \(m=3\).}
\label{table: RPR for m=3}
\end{table}

One may proceed by first contracting relevant primitive relations of type 1.
The composition of such blowdowns preserves the fixed centered primitive relation and does not create new opponents by \cref{lem:P and opponents after blowdown}.
We can still show that, after contracting type 1 relations, only special kinds of relations may appear.
Cases in which new relevant primitive relations appear are referred to as exceptional, and may need to be treated separately, as in the case $m(X)=2$. One can show that exceptional cases occur exactly when $X$ has one of the following sets of primitive relations:
\begin{align} \label{more_exceptional cases} 
\begin{split}
       & \bullet \ x_1+a=b, \quad x_2+b=c, \quad x_3+c=d \quad \text{ and } \quad x_0+d=a \, ; \\
       & \bullet \ x_1+x_2+a=b, \quad x_3+b=c \quad \text{ and } \quad x_0+c=a. 
\end{split}
\end{align}

After eliminating relations of type 1, the situation is not as clear as the case \(m(X) = 2\).
The following example shows that we need to either abandon the realm of smooth varieties when performing the birational transformations, or resort to contracting relations that are not relevant.

\begin{example}\label{ex:singular_blowup}
    Let \(X\) be a toric variety with Macaulay2 ID (6, 2170) or (6, 2264).
    It is a Fano 6-fold of Picard rank 4 with the following 9 primitive relations:
    \begin{align*}
        s &: \ x_0 + x_1 + x_2 + x_3 = 0,
        \\ p_1 &: \ x_0 + x_1 + t = 2a,
        \\ r_1 &: \ x_0 + x_1 + b = a,
        \\ p_2 &: \ x_2 + x_3 + t = 2b,
        \\ r_2 &: \ x_2 + x_3 + a = b,
        \\ p &: \ a+b=t,
    \end{align*}
    \[
        \begin{array}{ll|c|c}
             && \text{ID (6,~2170)} & \text{ID (6,~2264)} \\
             \hline
             q_1 :& u_0+u_1+u_2+a = & x_0 + x_1 + x_2 & 2x_0 + x_1, \\
             q_2 :& u_0+u_1+u_2+t = & x_2 + a & x_0+a, \\
             q_3 :& u_0+u_1+u_2+b = & x_2 & x_0.
        \end{array}
    \]

    The manifold \(X\) has 4 extremal primitive relations: \(p_1, p_2, p, q_1\).
    The remaining primitive relations can be expressed from the extremal relations as follows:
    \begin{align*}
        s &= 2p + p_1 + p_2, \\
        r_1 &= p+p_1, \\
        r_2 &= p+p_2, \\
        q_2 &= p_1+q_1, \\
        q_3 &= p+p_1+q_1.
    \end{align*}

    The following are all the relevant primitive relations: \(p_1,p_2,r_1,r_2\).
    The \(r_i\)'s are not contractible by \cref{prop: contractible}: \(\langle a,t \rangle\) and \(\langle b,t \rangle\) are cones in \(\Sigma_X\), but \(\{ x_0,x_1,t \}\) and \(\{ x_2,x_3,t \}\) are primitive collections.
    
    If we attempt to contract only relevant primitive relations, then we are naturally forced to contract one of the \(p_i\)'s, but then the result is a singular variety.
    We immediately run into the following problem: 
    Sato's description of how primitive collections change under blowup / blowdown is currently available only in the smooth setting.    

    However, in this example, we can contract \(p : \ a+b=t\), which gives us a smooth toric Fano manifold \(X^1\) with primitive relations \(s, r_1, r_2, q_1, q_3\), out of which \(r_1, r_2, q_1\) are extremal, and \(s = r_1 + r_2\), \(q_3 = q_1 + r_1\).
    We can then contract \(r_1\), yielding \(X^1 \to X^2\), and the latter is a \(\PP^3\)-bundle (globally, not just on a big open subset).
\end{example}

In fact, it can be shown that whenever $X$ has a primitive relation of type 3, say 
\[r_3 : \ x_0 + x_1 + t = 2a,\] 
it also has an auxiliary extremal primitive relation of the form \(\alpha : \ a+b = t\).
One can therefore contract \(\alpha\) to eliminate the corresponding relevant primitive collection \(\{x_0,x_1,t\}\).
In this case, it is necessary to analyze whether new relevant primitive relations arise.

A similar approach can be used to treat primitive relations of type 6, when not all coefficients on the right-hand side are equal to 1.
We illustrate this situation in the example that follows, where we choose to contract another auxiliary primitive relation.

\begin{example}\label{ex:singular_flip}
    Let \(X\) be the Fano toric manifold with Macaulay ID (6, 2268).
    It is 6-dimensional of Picard rank 4.
    We use the following notation for the set of its generators and primitive relations:
    \begin{align*}
        & G(\Sigma_X) = \{x_0,x_1,x_2,x_3,y_0,y_1,y_2,y_3,a,b\}; \\
        & \begin{array}{ccl cc ccl}
            s_x &:& x_0+x_1+x_2+x_3=0,
            &&& s_y &:& y_0+y_1+y_2+y_3 = 0, \\
            r_6 &:& x_0+x_1+x_2+a = 2y_0+y_1,
            &&& p &:& x_3+y_1+a = 2b, \\
            r_8 &:& x_0+x_1+x_2+b = y_0+y_1,
            &&& p' &:& y_2+y_3+a=y_0+x_3, \\
            \alpha &:& y_0+b=a,
            &&& q &:& y_0+y_1+x_3 = b, \\
            && 
            &&& q' &:& y_2+y_3+b = x_3.
        \end{array}
    \end{align*}
    Relations \(p,p',r_6,\alpha\) are extremal, and the rest can be expressed from these as follows:
    \begin{align*}
        \begin{array}{l cc l cc l}
             s_x = r_6+p+2\alpha,   &&& r_8 = r_6+\alpha, \\
             s_y = p+p'+2\alpha,    &&& q = p+\alpha,  &&& q' = p'+\alpha.
        \end{array}
    \end{align*}
    
    If we choose the centered primitive relation \(s_x\) to perform our algorithm, then \(r_6\) and \(r_8\) are all the relevant primitive relations, which are of types 6 and 8, respectively.
    We notice that the right hand side of \(r_6\) has a coefficient of \(2\), hence performing a flip with respect to this relation will result in a singular variety.
    At the same time, \(r_8\) is not extremal, and in fact also not contractible by \cref{prop: contractible}, because \(\{y_0,b\}\) does not span a cone.

    In this case, we can start by contracting the auxiliary relation \(\alpha\), obtaining a blowdown \(X \to X^1\) with primitive relations
    \(\PR(X^1) = \{s_x, r_8, s_y, q, q'\}\).
    Now \(r_6\) disappears and \(r_8\) becomes extremal, so we next flip with respect to \(r_8\) to get \(X^1 \dashrightarrow X^2\).
    The primitive relations on the resulting manifold are \(\PR(X^2) = \{s_x, r_8^{\text{flip}}, q'\}\), where \(r_8^{\text{flip}} : y_0+y_1 = x_0+x_1+x_2+b\).
    Now, \(s_x\) is contractible on \(X^2\), hence it is a global \(\PP^3\)-bundle.
\end{example}

Contracting auxiliary primitive relations may help avoid dealing with singular varieties. Their existence must be investigated, as well as the possible emergence of new relevant primitive relations after contraction. New ideas may be required to treat other types of relevant primitive relations.

\printbibliography
\Addresses
\end{document}